\newtheorem{theorem}{Theorem }[section]
\newtheorem{conjecture}{Conjecture}[section]
\newtheorem{assumption}{Assumption}[section]
\newtheorem{lemma}[theorem]{Lemma}
\newtheorem{corollary}[theorem]{Corollary}
\newtheorem{proposition}[theorem]{Proposition}
\theoremstyle{definition}
\theoremstyle{remark}
\newtheorem{remark}[theorem]{Remark}
\numberwithin{equation}{section}
\renewcommand{\Re}{\operatorname{Re}} 
\renewcommand{\Im}{\operatorname{Im}} 
\DeclareMathOperator*{\slim}{s-lim}
\DeclareMathOperator*{\res}{res}
\def \laplace {{\mathscr{L}}}
\def \new {{{\textup{new}}}}
\def \old {{{\textup{old}}}}
\def \G {{\Gamma}}
\def \g {{\gamma}}
\def \F {{\mathcal{F}}}
\def \l {{\lambda}}
\def \a {{\mathfrak{a}}}
\def \f {{\varphi}}
\def \R {{\mathbb R}}
\def \H {{\mathbb H}}
\def \N {{\mathbb N}}
\def \C {{\mathbb C}}
\def \Z {{\mathbb Z}}
\def \e {{\epsilon }}
\def \GinfmodG {{\Gamma_{\!\!\infty}\!\!\setminus\!\Gamma}}
\def \GmodH {{\Gamma\setminus\H}}
\def \sl  {\hbox{SL}_2(\mathbb Z)}
\def \slr  {\hbox{SL}_2(\mathbb R)}
\def \psl  {\hbox{PSL}_2(\mathbb R)}
\def \GmodH {\G\backslash\mathbb{H}}
\newcommand{\mattwo}[4]
{\left(\begin{array}{cc}
                        #1  & #2   \\
                        #3 &  #4
                          \end{array}\right) }
\newcommand{\rum}[1] {{L}^2\left( #1\right)}
\newcommand{\norm}[1]{\left\lVert #1 \right\rVert}
\newcommand{\abs}[1]{\left\lvert #1 \right\rvert}
\newcommand{\inprod}[2]{\left \langle #1,#2 \right\rangle}
\newcommand{\tr}[1] {\hbox{tr}\left( #1\right)}
\begin{document}
\date{\today}
\thanks{The author is funded by a Steno Research Grant  from The Danish Natural Science Research Council}
\title[Small eigenvalues and residual eigenvalues]{On Selberg's small eigenvalue conjecture and residual eigenvalues}
\author{Morten S. Risager}
\address{Department of Mathematics, University of Aarhus, Ny Munkegade Building 530, 8000 Aarhus C, Denmark}
\email{risager@imf.au.dk}
\subjclass[2000]{Primary 11F72; Secondary  11M36, 34E10}
\begin{abstract}We show that Selberg's eigenvalue conjecture concerning small eigenvalues of the automorphic Laplacian for congruence groups is equivalent to a conjecture about the non-existence of residual eigenvalues for a perturbed system. We prove this using a combination of methods from asymptotic perturbation theory and number theory.\end{abstract}
\maketitle

\section{Introduction}

Let $\G\subseteq \sl$ be a congruence group and let $0=\lambda_0< \l_1\leq \l_2
\leq \l_3\ldots \l_i\to \infty$ be the eigenvalues of the automorphic Laplacian on $\rum \GmodH$ induced from the Laplace operator
\begin{equation*}
  \Delta=-y^2\left(\frac{\partial^2}{\partial x^2}+\frac{\partial^2}{\partial y^2}\right).
\end{equation*}
 An eigenvalue $0<\lambda<1/4$ is called a \emph{small} eigenvalue. 
In a celebrated paper  \cite{Selberg:1965aa} Selberg conjectured the following:
\begin{conjecture}\label{selbergconjecture} The Laplacian for a congruence group has no small eigenvalues, i.e. $\lambda_1\geq 1/4$.
\end{conjecture}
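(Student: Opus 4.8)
The plan is to recast Conjecture~\ref{selbergconjecture} as a Ramanujan-type statement and to attack it through the analytic theory of automorphic $L$-functions. A hypothetical small eigenvalue $0<\l_1<1/4$ is carried by a Maass cusp form $f$ on $\GmodH$ with spectral parameter $t_f=i\theta$, so that $\l_1=1/4+t_f^2=1/4-\theta^2$ with $0<\theta<1/2$; since $\G$ is a congruence group we may take $f$ to be a Hecke newform, and $\theta$ then measures the failure of $f$ to be tempered at the archimedean place, which by Rankin--Selberg theory is comparable to its failure at the finite places. Thus Conjecture~\ref{selbergconjecture} is exactly the assertion $\theta=0$, i.e.\ the Ramanujan--Petersson conjecture for Maass newforms of congruence level, and the strategy is to prove a sequence of upper bounds $\theta\le\theta_k$ with $\theta_k\to 0$, leaving only $\theta=0$.

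First I would apply the Kuznetsov (Petersson) trace formula for $\G$ against a test function chosen so that exceptional spectral parameters contribute to the spectral side with a fixed sign and a weight that diverges as $\theta\to 1/2$, while bounding the Kloosterman side by Weil's estimate $|S(m,n;c)|\ll_{\e}(m,n,c)^{1/2}c^{1/2+\e}$; this is Selberg's original argument and already gives $\l_1\ge 3/16$, that is $\theta\le 1/4$. To push further I would bring in the Rankin--Selberg method: for each $k$ the $L$-function $L(s,\mathrm{Sym}^k f\times\overline{\mathrm{Sym}^k f})$ has nonnegative Dirichlet coefficients and is holomorphic for $\Re s>1$ apart from a simple pole at $s=1$, and nonnegativity at the unramified Euler factors then forces the Satake parameters of $f$, hence $\theta$, to satisfy a bound that sharpens as $k$ grows. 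This requires the automorphy of $\mathrm{Sym}^k f$; the cases $k=2,3,4$ (Gelbart--Jacquet; Kim--Shahidi; Kim), together with the relevant Rankin--Selberg $L$-functions, give $\theta\le 7/64$. The endgame is to run the same argument for all $k$: with every symmetric power automorphic one gets temperedness, $\theta=0$, and therefore $\l_1\ge 1/4$.

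The crux --- and the reason the conjecture is still open --- is that this endgame rests on Langlands functoriality for all $\mathrm{Sym}^k$, equivalently on the full $\mathrm{GL}_2$ Ramanujan bound, while only finitely many cases are available, so the $L$-function route stalls at a positive $\theta$; the same wall blocks the geometric side, where extracting $\l_1\ge 1/4$ from Kuznetsov's formula would need cancellation in $\sum_{c\le X}S(m,n;c)/c$ beyond the Weil bound. A complete proof therefore seems to require Ramanujan-type input from a different mechanism, such as comparing the automorphic Laplacian with an analytically perturbed family of operators and transporting the nonexistence of small eigenvalues into the nonexistence of residual eigenvalues for that family --- the route developed in the remainder of this paper --- so that asymptotic perturbation theory can replace functoriality. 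I expect the construction and spectral analysis of that perturbed family, rather than any of the $L$-function bookkeeping above, to be where the genuine difficulty lies.
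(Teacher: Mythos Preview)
The statement you are addressing is Conjecture~\ref{selbergconjecture}, and the paper does \emph{not} prove it; it remains open. The paper's contribution is Theorem~\ref{highlow}, establishing that Conjecture~\ref{selbergconjecture} is equivalent to Conjecture~\ref{myconjecture} on residual eigenvalues of a perturbed Laplacian. So there is no ``paper's own proof'' to compare against, and your proposal --- which you yourself flag as incomplete --- is not a proof either.

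That said, your survey of the $L$-function and functoriality route is accurate as historical and heuristic background, and you correctly identify the obstruction: the symmetric-power argument stalls at $\theta\le 7/64$ because only $\mathrm{Sym}^k$ for $k\le 4$ is known to be automorphic. Where your write-up diverges from the paper is in emphasis. The paper does not attempt to push the $L$-function bounds further; instead it constructs a one-parameter family $L(\e)$ of Laplacians via character twists by weight-2 Eisenstein series (not cusp forms), shows small eigenvalues are stable under this perturbation, and proves that a hypothetical small eigenvalue forces the Phillips--Sarnak integral $I(s_0)$ to vanish (on Conjecture~\ref{myconjecture}) while a Rankin--Selberg computation combined with Rohrlich's nonvanishing theorem shows $I(s_0)\neq 0$ for a suitable twist. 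Your closing paragraph gestures toward this, but the mechanism is not ``replacing functoriality with perturbation theory'' in any unconditional sense: the argument is conditional on Conjecture~\ref{myconjecture}, and the paper's point is precisely that this residual-spectrum conjecture is \emph{equivalent} to Selberg's, not that it resolves it.
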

In this paper we prove that Selberg's conjecture is true if and only if a \lq twisted\rq{} Laplacian behaves sufficiently well: Consider the Hecke congruence groups $\G=\G_0(q)$, and the automorphic Laplacian with characters induced from a Dirichlet character $\chi_0$ modulo $q$.

\begin{equation*}
  \chi_0'\left(\mattwo{a}{b}{c}{d}\right)=\chi_0(d), \qquad \textrm{for }\mattwo{a}{b}{c}{d}\in \G_0(q)
\end{equation*}
Consider now the set $M_2^\infty(\G)$ of holomorphic forms of weight 2 which are cuspidal at infinity i.e. 
\begin{equation*}
  M_2^\infty(\G)=\{f\in M_2(\G) \vert \int_0^1f(z)dx=0\}
\end{equation*} where $M_2(\G)$ is the standard set of weight 2 holomorphic forms. We emphasize that these are not necessarily cusp forms since we only assume cuspidality \emph{at infinity}. Fix $f\in  M_2^\infty(\G)$. We then construct a family of characters (parametrized by $\e\in \R$) in the following way: 
\begin{equation}
  \label{eq:2}
  \begin{array}{llccc}\chi_\e&:&\G&\to& S^1\\
 &&\g&\mapsto&\chi_0'(\g) \exp{\left(2\pi i\e\Re \int_{z_0}^{\g z_0}f(z)dz\right)}
 \end{array}.
\end{equation}
This definition does not depend on the choice of  $z_0\in \H$. We may then consider the family of Laplacians $A(\G,\e)$ defined as the closure of the essentially selfadjoint operator defined by $\Delta g$ on smooth functions $g:\H\to\C$ such that $g(\g z)=\chi_\e(\gamma)g(z)$ for all $\g\in\G$ and such that  $g,\Delta(g)$ are square integrable. 

We propose the following conjecture:
\begin{conjecture}\label{myconjecture} For every $\G=\G_0(q)$, $\chi_0$, and $f$ as above the following holds: For every $0<a\leq b<1/4$ there exist $\e_0>0$ such that when $\abs{\e}\leq\e_0$ the operator $A(\G,\e)$ has no \emph{residual} eigenvalues in $[a,b]$.
\end{conjecture}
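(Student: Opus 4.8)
The natural strategy is to treat $\e\mapsto A(\G,\e)$ as a real-analytic family of self-adjoint operators and to play it off against the well-understood residual spectrum at $\e=0$. First I would remove the $\e$-dependence from the domain: multiplication by the unimodular function $z\mapsto\exp(-2\pi i\e\Re\int_{z_0}^z f(w)\,dw)$ (well defined because $\H$ is simply connected) is a unitary operator taking $\chi_\e$-automorphic functions to $\chi_0'$-automorphic ones, and it conjugates $A(\G,\e)$ into an operator $L(\e)=\Delta+\e D_1+\e^2 D_2$ on the fixed Hilbert space $\rum{\G_0(q)\backslash\H,\chi_0'}$, where $D_1$ is first order and $D_2$ zeroth order with coefficients built from $y$, $\Re f$ and $\Im f$; by Kato's theory this is a self-adjoint holomorphic family of type (A) near $0$. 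At $\e=0$ the Eisenstein series $E_\a(z,s,\chi_0')$ and the scattering matrix $\Phi(s,\chi_0')$ for $\G_0(q)$ with nebentypus are expressed through the Dirichlet $L$-functions attached to $\chi_0$ and its inducing characters; since such an $L$-function is holomorphic and zero-free for $\Re>1$ (and $\zeta$ contributes only its pole at $s=1$), $\Phi(s,\chi_0')$ has no poles in $1/2<s<1$ when $\chi_0\neq1$, and only the pole at $s=1$ spanning the constants when $\chi_0=1$. Hence $L(0)=A(\G,0)$ has no residual eigenvalues in $(0,1/4)$, in particular none in $[a,b]$.

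The next point is that residual eigenvalues of $L(\e)$ are exactly the discrete eigenvalues $\l=s(1-s)$, $s\in(1/2,1)$, at which some $E_\a(z,s,\chi_\e)$ has a pole --- equivalently, the discrete eigenvalues whose eigenfunctions are not cusp forms. The delicate feature is that the set of singular cusps of $(\G_0(q),\chi_\e)$ need not be stable at $\e=0$: the period of $f(z)\,dz$ around a cusp $\a$ is a non-zero multiple of the constant Fourier coefficient of $f$ there, so a cusp at which $f$ is not cuspidal can become non-singular once $0<\abs{\e}\le\e_0$, and the rank of the continuous spectrum drops. The Eisenstein mass released by such a cusp must reappear in the discrete spectrum or as a resonance, and the task is to show that for $\abs{\e}\le\e_0$ none of it reappears as a \emph{residual} eigenvalue inside $[a,b]$. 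For the branches present, or nearby, at $\e=0$ this is a Hurwitz/Rouch\'e argument: on a complex neighbourhood $U$ of the compact arc $\{s:s(1-s)\in[a,b]\}\subset(1/2,1)$ the function $\det\Phi(\cdot,0)$ is holomorphic and invertible (its zeros come from zeros of Dirichlet $L$-functions, which avoid $U$), so joint analyticity of $\Phi(s,\e)$ in $(s,\e)$ forces $\det\Phi(\cdot,\e)$ to stay pole-free over $[a,b]$ for small $\e$. It remains to handle the branches emerging from the vanishing Eisenstein series; here I would invoke Fermi's golden rule in the form developed by Phillips--Sarnak, Balslev, Petridis and others: the Eisenstein series at a desingularizing cusp disperses into the continuum --- a \emph{resonance}, not an $L^2$ eigenvalue, with imaginary part controlled by a non-vanishing golden-rule pairing --- or at worst produces an \emph{embedded} eigenvalue on $[1/4,\infty)$; and to first order a putative small \emph{cuspidal} eigenvalue of $L(0)$, were one to exist, would remain cuspidal under the deformation. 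In each case nothing residual lands in $[a,b]$, which would complete the proof.

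The step I expect to be the real obstacle is the last one, and precisely the required non-vanishing of the Fermi golden-rule pairings. Since $D_1$ is manufactured from the holomorphic weight-$2$ form $f$, each such pairing unfolds by Rankin--Selberg to a period of an Eisenstein series or a cusp form against $f$, hence to a special value of an $L$-function, and one needs non-vanishing for \emph{every} admissible $f$ --- dually, that a putative small cuspidal eigenvalue of $A(\G_0(q),0)$ cannot be turned residual by \emph{any} choice of $f$. Establishing this unconditionally is beyond current technology; it is exactly the arithmetic input that ties Conjecture~\ref{myconjecture} to Conjecture~\ref{selbergconjecture}, and making that tie precise --- rather than proving Conjecture~\ref{myconjecture} outright --- is what the present methods ultimately achieve.
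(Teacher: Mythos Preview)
The statement you are trying to prove is a \emph{conjecture}; the paper does not prove it unconditionally. The only argument the paper gives towards it is the easy direction of Theorem~\ref{highlow}: assuming Selberg's Conjecture~\ref{selbergconjecture}, there are no eigenvalues of $L(0)$ in $(0,1/4)$ at all, and then Corollary~\ref{partytime} (continuity of the spectral counting function below $1/4$, proved via the Selberg trace formula) plus compactness of $[a,b]$ gives an $\e_0>0$ with no eigenvalues --- hence in particular no residual ones --- of $L(\e)$ in $[a,b]$ for $\abs{\e}\le\e_0$. No scattering-matrix analyticity, no Hurwitz/Rouch\'e, and no Fermi golden rule enter that argument.

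Your proposed direct attack has a concrete gap at the very first step. You assert that $L(\e)$ is a self-adjoint holomorphic family of type~(A). The paper explicitly warns that this is \emph{not} known in the situation at hand (Remark~\ref{discontinuous}): since $f\in M_2^\infty(\G)$ is only assumed cuspidal at~$\infty$, the parabolic periods of $\Re(f(z)\,dz)$ at the other cusps may be nonzero, so those cusps close for $\e\neq 0$, the last two lines of the trace formula \eqref{sporformlen} are discontinuous at $\e=0$, and standard Kato--Rellich theory does not apply. This is precisely why the paper develops and uses Kato's \emph{asymptotic} perturbation theory (Section~\ref{pert1}) instead. For the same reason your ``joint analyticity of $\Phi(s,\e)$'' is unavailable: the scattering matrix changes size at $\e=0$, so the Hurwitz argument on $\det\Phi$ does not get off the ground.

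Your closing paragraph is right in spirit but slightly reversed in its logic relative to the paper. The $L$-function input is used in the \emph{other} direction of Theorem~\ref{highlow}: one shows (Theorem~\ref{PSisnotzero}, via Rankin--Selberg unfolding and Rohrlich's nonvanishing theorem) that for a hypothetical small cuspidal eigenvalue one can choose data so that the Phillips--Sarnak integral \eqref{PS} is \emph{nonzero}, while Conjecture~\ref{myconjecture} forces it to vanish (Theorem~\ref{PSiszero}). So the arithmetic nonvanishing is what lets Conjecture~\ref{myconjecture} imply Selberg, not what one would need to prove Conjecture~\ref{myconjecture} itself.
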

 We refer to section \ref{basics} for the notion of residual spectrum. It is well known (dating back to Selberg) that there are no residual eigenvalues in $]0,1/4[$ when $\e=0$.  It is not too surprising that Conjecture \ref{myconjecture} follows from Selberg's Conjecture \ref{selbergconjecture}. In fact a similar phenomenon happens whenever we have a group without small eigenvalues. What is much more surprising is that the opposite implication is true as well. I.e. we have the following: 
\begin{theorem} \label{highlow} Conjecture \ref{selbergconjecture} is true if and only if Conjecture \ref{myconjecture} is true.
\end{theorem}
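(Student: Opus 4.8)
The plan is to move to a fixed Hilbert space by a unitary gauge transformation, dispatch the easy implication with standard perturbation theory, and prove the surprising implication by a Phillips--Sarnak-type ``dissolving'' argument whose crucial non-vanishing input is a Rankin--Selberg $L$-value. Since $z\mapsto\Re\int_{z_0}^z f(w)\,dw$ is a single-valued harmonic function on $\H$, multiplication by $\exp\!\bigl(-2\pi i\e\,\Re\int_{z_0}^z f\bigr)$ is a unitary operator $U_\e$ from the Hilbert space carrying $A(\G,\e)$ onto the fixed space $\rum{\GmodH,\chi_0'}$, and $\tilde A(\e):=U_\e A(\G,\e)U_\e^{-1}$ is the magnetic Laplacian attached to the harmonic (closed and co-closed) real $1$-form $2\pi\e\,\Re(f\,dz)$; thus $\tilde A(\e)=\Delta+\e B+\e^2C$ with $B$ of first order. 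Because $\int_0^1 f\,dx=0$, the cusp $\infty$ remains singular for $\chi_\e$ for every $\e$, so the essential spectrum of $\tilde A(\e)$ is $[1/4,\infty)$ throughout; hence each eigenvalue in $(0,1/4)$ is isolated below the continuous spectrum, and for these I would use ordinary analytic (Kato) perturbation theory --- clean when $f$ is a cusp form, since then $B,C$ are relatively $\Delta$-bounded and no cusp degenerates, and needing the more careful cusp bookkeeping of Phillips--Sarnak/Balslev--Venkov for general $f\in M_2^\infty(\G)$.

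For Conjecture~\ref{selbergconjecture} $\Rightarrow$ Conjecture~\ref{myconjecture}: Selberg's conjecture forces $\tilde A(0)=A(\G_0(q),\chi_0')$ to have no cuspidal eigenvalue in $(0,1/4)$, and it is classical (and recorded in the paper) that it has no residual eigenvalue there either, so $\tilde A(0)$ has no eigenvalue in $(0,1/4)$ at all. Since $[a,b]\subset(0,1/4)$ is bounded away from the edge $1/4$ of the essential spectrum, a min--max / form-continuity argument --- each eigenvalue below the essential spectrum varies continuously in $\e$, and the finitely many eigenvalues of $\tilde A(0)$ that could drift into $[a,b]$ are either absent or tend to $0$ (care is needed at the cusps when $f$ is not a cusp form, but the conclusion is standard) --- shows $\tilde A(\e)$, hence $A(\G,\e)$, has no eigenvalue whatsoever in $[a,b]$ once $|\e|\le\e_0$, in particular no residual one.

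For Conjecture~\ref{myconjecture} $\Rightarrow$ Conjecture~\ref{selbergconjecture} I argue contrapositively. If Selberg fails then, by the standard reduction of the conjecture for all congruence groups to $\G_0(q)$ with nebentypus, there are $q,\chi_0$ and a cuspidal eigenvalue $\lambda_j=s_j(1-s_j)\in(0,1/4)$, $s_j\in(1/2,1)$, of $\tilde A(0)$; replacing $q$ by $11q$ if necessary I may assume $S_2(\G_0(q))\neq 0$, and I take $\psi_j$ a Hecke eigenform in the $\lambda_j$-eigenspace and $f$ a Hecke eigen cusp form in $S_2(\G_0(q))\subseteq M_2^\infty(\G)$. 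The perturbed eigenpair $(\lambda_j(\e),\psi_j(\e))$ is real-analytic near $\e=0$, and the constant term of $\psi_j(\e)$ at a singular cusp $\a$ equals $\beta_\a(\e)\,y^{1-s_j(\e)}+(\text{exponentially small})$ --- the $y^{s_j(\e)}$-term is absent by $L^2$-membership --- with $\beta_\a(\e)$ real-analytic and $\beta_\a(0)=0$ since $\psi_j$ is cuspidal. First-order perturbation theory gives $(\Delta-\lambda_j)\psi_j^{(1)}=(\lambda_j'(0)-B)\psi_j$; pairing against the non-$L^2$ eigenfunction $E_\a(z,s_j)$ --- holomorphic at $s_j$ precisely because there is no residual eigenvalue at $\lambda_j$ when $\e=0$ --- and applying Green's formula, the boundary contribution localises at $\a$ through the Wronskian $W[y^{1-s_j},y^{s_j}]=2s_j-1\neq 0$ while the $\lambda_j'(0)$-term drops by cuspidality of $\psi_j$, so $\beta_\a'(0)=c\,\langle B\psi_j,E_\a(\cdot,s_j)\rangle$ for a fixed nonzero $c$. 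If $\beta_\a'(0)\neq 0$ for some $\a$, then $\beta_\a(\e)\neq 0$ for all small $\e\neq 0$, so $\psi_j(\e)$ is not cuspidal and, its eigenvalue lying in $(0,1/4)$, has a nonzero residual component; as $\lambda_j(\e)\to\lambda_j$, choosing $[a,b]$ a closed neighbourhood of $\lambda_j$ inside $(0,1/4)$ produces a residual eigenvalue of $A(\G,\e)$ in $[a,b]$ for all small $\e\neq 0$, contradicting Conjecture~\ref{myconjecture}.

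The outstanding point --- the main obstacle, where number theory enters --- is to verify $\langle B\psi_j,E_\a(\cdot,s_j)\rangle\neq 0$ for a good choice of $f$ and cusp $\a$, say $\a=\infty$. Unfolding the Eisenstein series and splitting the first-order operator $B$ into its holomorphic and antiholomorphic parts turns this pairing into an explicit archimedean factor (a quotient of $\Gamma$-values coming from a Whittaker/Bessel Mellin transform) times a value of the $\mathrm{GL}_2\times\mathrm{GL}_2$ Rankin--Selberg $L$-function $L(s,f\times\psi_j)$ at a point $s=s_j+\sigma$ which, once the positive shift $\sigma$ is tracked, satisfies $\Re s>1$; there the Rankin--Selberg $L$-function is non-vanishing --- by absolute convergence of the Euler product for $\Re s$ large, and by the classical zero-free region just to the right of $\Re s=1$ for $\lambda_j$ close to $1/4$ --- and the auxiliary $\zeta$- and Dirichlet-$L$-factors in the unfolding are likewise non-zero, giving $\beta_\infty'(0)\neq 0$. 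I expect the genuinely hard parts to be (i) this non-vanishing, including arranging that the shift lands where non-vanishing is available and that the two archimedean contributions do not cancel, and (ii) the uniformity of the perturbation theory near $\e=0$ on the non-compact quotient --- which is why taking the perturbing form $f$ to be a cusp form is convenient, keeping every cusp singular and the analysis within the classical theory of an isolated eigenvalue below the continuous spectrum.
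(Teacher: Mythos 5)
Your broad architecture (gauge to a fixed Hilbert space, easy direction via continuity, hard direction via a Phillips--Sarnak dissolving argument whose engine is a Rankin--Selberg non-vanishing) matches the paper, and the Green's-formula/Wronskian bookkeeping that converts $\beta_\a'(0)$ into the Phillips--Sarnak pairing is the same mechanism the paper encodes via Kato's first-order eigenprojection formula. But the core of your hard direction fails at exactly the step you flag as ``genuinely hard,'' and in a way that isn't salvageable with your choice of perturbing form.

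You choose $f$ to be a weight-$2$ \emph{cusp} form and claim the unfolding lands the Rankin--Selberg $L$-value $L(s,f\times\psi_j)$ in a half-plane $\Re(s)>1$ where non-vanishing is free. That shift accounting is wrong. With $b_n$ the Fourier coefficients of a weight-$2$ cusp form, $b_n\asymp n^{1/2}$, so writing $\lambda_f(n)=b_n n^{-1/2}$ one has $\sum b_n\lambda_{\psi_j}(n)n^{-(s_0+1/2)}=\sum\lambda_f(n)\lambda_{\psi_j}(n)n^{-s_0}$, i.e. the analytically normalized degree-$4$ Rankin--Selberg $L$-function is evaluated at $s_0\in(1/2,1)$ --- strictly inside the critical strip, not in the region of absolute convergence, and not covered by a classical zero-free region even when $\lambda_j\to 1/4$ (then $s_0\to 1/2$, the central point, which is worse, not better). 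Non-vanishing of a generic $\mathrm{GL}_2\times\mathrm{GL}_2$ $L$-function at an interior point of the critical strip is GRH territory, so the contradiction you need cannot be extracted.

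The paper's resolution is to take $f$ to be a weight-$2$ \emph{Eisenstein} series $G_{q_1,q_2}$ that is cuspidal only at $\infty$. Because its Fourier coefficients are divisor sums, the Rankin--Selberg Dirichlet series factors into $\Lambda(s_0-1/2,\f)\Lambda(s_0+1/2,\f)/\Lambda(2s_0,\chi)$: two of these factors sit in $\Re>1$ (automatically non-zero), and the lone critical-strip factor $\Lambda(s_0-1/2,\f)$ is made non-zero by twisting $\f$ by a suitable primitive even Dirichlet character via Rohrlich's theorem --- an ingredient absent from your proposal. The price for the non-cuspidal $f$ is that the character perturbation opens and closes cusps, so regular Kato--Rellich theory no longer applies; the paper replaces your ``analytic perturbation of an isolated eigenvalue'' step with Kato's \emph{asymptotic} perturbation theory and proves the needed stability of small eigenvalues via the Selberg trace formula (which is also what rigorously supplies the continuity you invoke in the easy direction, and which your proposal does not actually argue for general $f\in M_2^\infty(\G)$ as Conjecture~\ref{myconjecture} requires). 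So the Eisenstein choice and the Rohrlich twist are not optional refinements but the load-bearing arithmetic inputs, and the asymptotic perturbation theory is the analytic cost of using them.
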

Hence Selberg's conjecture about \emph{cuspidal} eigenvalues may be reformulated entirely in terms of \emph{residual} eigenvalues (of a perturbed system). We believe that the fact that  Conjecture \ref{myconjecture} implies  Conjecture \ref{selbergconjecture} is an \lq arithmetic statement\rq{} in the sense that it is tied up with the arithmeticity of the unperturbed system. Certainly the proof is arithmetic in nature: It uses Hecke-operators, non-vanishing of character twists of $L$-functions etc.

\begin{remark}We make a few comments concerning Selberg's Conjecture \ref{selbergconjecture}.  It is essential that we are considering congruence groups: For general Riemann surfaces of finite volume it is know that  small eigenvalues may occur -- both residual and cuspidal eigenvalues. See e.g. \cite{Selberg:1965aa, Randol:1974aa, Zograf:1983aa}. For general congruence groups Conjecture \ref{selbergconjecture}  is best possible since the eigenvalue 1/4 may be constructed from certain even 2-dimensional Galois representations.  

 We provide a short list of results from the history of the small eigenvalue conjecture: Selberg \cite{Selberg:1965aa} proved $\l_1\geq 3/16\approx 0.188$ using Weils bound on exponential sums, which is a consequence of the Riemann hypothesis for the zeta-function of a curve over a finite field. Jacquet \& Gelbart  \cite{Gelbart:1978aa} proved $\l_1>3/16$ and  Iwaniec \cite{Iwaniec:1990aa}  proved  a density estimate on the number of small eigenvalues. For groups of small level Huxley \cite{Huxley:1985aa} confirmed the conjecture for level $q\leq 18$. More recently Booker \& Str{\"o}mbergsson \cite{Booker:2007aa} showed that there are no small eigenvalues if $q\leq 857$ and squarefree.

A major breakthrough was made by Luo, Rudnick \& Sarnak
\cite{Luo:1995aa, Luo:1999aa} who proved that $\l_1>171/748\approx
0.218$ (and more generally found  bounds towards the Ramanujan
conjecture for $GL_n$). Iwaniec \cite{Iwaniec:1996aa} then used some
of the same ideas to show $\l_1>10/49\approx 0.204$  using a proof
which -- in contrast to the proof due to Luo, Rudnick and Sarnak -- is entirely in a $GL_2$ framework.  Later Kim \& Shahidi proved $\lambda_1\geq 66/289\approx 0.228$ \cite{Kim:2002aa}, and Kim \& Sarnak proved $\lambda_1\geq 975/4096\approx 0.238$ \cite{Kim:2003aa} using new cases of functoriality.  Selberg's conjecture would follow from the general Langland's functoriality conjectures \cite{Langlands:1970aa} concerning symmetric tensor powers of automorphic $L$-functions. See \cite{Shahidi:2004aa} for a survey of the methods leading to such results.



\end{remark}


\begin{remark} The current work grew out of studying a paper by Balslev and Venkov \cite{Balslev:2000aa}. They found that there are no small eigenvalues very close to 1/4 with the corresponding  eigenfunction being odd. Their method was limited by the possible existence of a real zero of the $L$-function attached to a Hecke-Maa\ss{} cusp form. Such a \lq bad\rq{} zero would surely violate the generalized Riemann hypothesis thereby adding credit to the Selberg conjecture (at least for those of us believing GRH). They then used that for $\hbox{GL}_2$ $L$-functions there are no Siegel zeroes  \cite{Hoffstein:1994aa} to conclude that there are no small eigenvalues  close to 1/4. 

In this paper we reprove the relevant  results from  \cite{Balslev:2000aa}, and extend the method to cover the whole interval $]0,1/4[$, i.e. surpassing the problem of real zeros of $L$-functions. Unfortunately Lemma 3 of \cite{Balslev:2000aa} does not seem to be provable as indicated by the authors, nor does it seem to suffice for what they are using it for. Our Conjecture \ref{myconjecture} serves as a valid substitute in the case of Hecke congruence groups, thereby leading only to a conditional proof of the Selberg conjecture.  

 Another new ingredient is adding twists by Dirichlet character allowing us to bypass the problem of the \lq bad\rq{} real zero of the relevant $L$-function as well as other obstacles. 
\end{remark}

We now describe in outline the method of the proof: At the core is the vanishing or non-vanishing at a special point of the following integral 
\begin{equation}\label{PS}
 4\pi i\int_{\G_0(q)\backslash\H}y^2\left(f(z)\frac{\partial \f(z)}{\partial \overline z}+\overline{f(z)}\frac{\partial \f(z)}{\partial z}\right)\overline{E_\infty(z,\overline{s},\chi)}d\mu(z)
\end{equation}
which we denote 
\begin{equation}
  \label{eq:7}
   I(s,\f,\G_0(q),\chi,f)
\end{equation}
Here $f\in M_2^\infty(\G_0(q))$, $\chi$ is an even Dirichlet character, $\f$ is an eigenfunction of the automorphic Laplacian for $(\G_0(q),\chi)$ with eigenvalue $\lambda=s_0(1-s_0)$, $E_\infty(z,s,\chi)$ is the Eisenstein series for the cusp at infinity, and $s$ is a complex number. We want to consider what happens at $s=s_0$. This is done in two different ways. One using asymptotic perturbation theory and one using number theory.

We start by proving the following:
\begin{theorem} \label{PSiszero} If $\lambda=s_0(1-s_0)$ is a small eigenvalue then on Conjecture \ref{myconjecture}
\begin{equation*}
   I(s_0,\f,\G_0(q),\chi,f)=0.
\end{equation*}
\end{theorem}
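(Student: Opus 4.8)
The plan is to recognize the family $A(\G_0(q),\e)$, after a unitary change of gauge, as a quadratic perturbation $\Delta+\e L^{(1)}+\e^2 L^{(2)}$ of the automorphic Laplacian on one \emph{fixed} Hilbert space, to identify the integrand of \eqref{PS} with $L^{(1)}\f$, and then to use Conjecture \ref{myconjecture} to force the small eigenfunction to remain \emph{cuspidal} under the perturbation, whence $I(s_0,\ldots)$ must vanish by pairing against an Eisenstein series. Throughout I apply Conjecture \ref{myconjecture} with its $\chi_0$ equal to the $\chi$ of the theorem, so that $A(\G_0(q),0)$ carries the induced character $\chi'$, $\f$ is one of its eigenfunctions, and $\chi_\e=\chi'\cdot\exp\!\big(2\pi i\e\Re\int_{z_0}^{(\cdot)}f\big)$.

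First, put $F(z)=\int_{z_0}^z f(w)\,dw$ and $m_\e(z)=\exp\!\big(2\pi i\e\Re F(z)\big)$; this is a multivalued unimodular function on $\H$ with $m_\e(\g z)=\chi_\e(\g)\overline{\chi'(\g)}\,m_\e(z)$, so multiplication by $m_\e^{-1}$ is a unitary map from the $L^2$-space of $\chi_\e$-automorphic functions onto that of $\chi'$-automorphic functions, conjugating $A(\G_0(q),\e)$ into $L_\e:=m_\e^{-1}\Delta\, m_\e$. Since $\Re F$ is harmonic with $\partial_z\Re F=\tfrac12 f$ and $\partial_{\bar z}\Re F=\tfrac12\overline f$, writing $\Delta=-4y^2\partial_z\partial_{\bar z}$ a short computation gives
\[
 L_\e=\Delta+\e L^{(1)}+\e^2 L^{(2)},\qquad L^{(1)}\f=-4\pi i\,y^2\big(f\,\partial_{\bar z}\f+\overline f\,\partial_z\f\big),\qquad L^{(2)}\f=4\pi^2 y^2\abs{f}^2\f .
\]
Comparing with \eqref{PS}, and since a small eigenvalue $\l_0=s_0(1-s_0)$ has $s_0\in\,]1/2,1[$ real, we get
\[
 I(s_0,\f,\G_0(q),\chi,f)=-\inprod{L^{(1)}\f}{E_\infty(\cdot,s_0,\chi)},
\]
where $\inprod{u}{v}=\int_{\G_0(q)\backslash\H}u\,\overline v\,d\mu$; the integral converges absolutely because $\f$ is a genuine cusp form (there is no residual spectrum in $]0,1/4[$ at $\e=0$), so $\f$ and its derivatives decay rapidly at every cusp while $E_\infty(\cdot,s_0,\chi)$ and $f$ grow at most polynomially.

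Next, $\l_0$ lies strictly below the essential spectrum $[1/4,\infty)$ of $L_0=\Delta$, hence is an isolated eigenvalue of finite multiplicity, and $\inprod{L^{(2)}\f}{\f}<\infty$; by the asymptotic perturbation theory already developed there is an eigenvalue branch $\l(\e)\to\l_0$ of $L_\e$ with eigenfunctions $\f_\e\to\f$. Choose $[a,b]\subset\,]0,1/4[$ with $\l_0$ in its interior; then $\l(\e)\in[a,b]$ for $\abs{\e}$ small, and Conjecture \ref{myconjecture} says $A(\G_0(q),\e)$ — hence $L_\e$ — has no residual eigenvalue in $[a,b]$. Since any $L^2$-eigenfunction of $L_\e$ with eigenvalue in $]0,1/4[$ lies in the sum of the cuspidal and residual subspaces, and Conjecture \ref{myconjecture} excludes a residual eigenvalue in $[a,b]$, the function $\f_\e$ is cuspidal for all sufficiently small $\e$. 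Now rewrite the eigenequation as $(\Delta-\l(\e))\f_\e=-\e L^{(1)}\f_\e-\e^2 L^{(2)}\f_\e$ and pair it against $E_\infty(\cdot,s(\e),\chi)$, where $s(\e)(1-s(\e))=\l(\e)$ and $s(\e)\to s_0$ (recall $E_\infty(\cdot,s,\chi)$ is holomorphic in $s$ near $s_0\in\,]1/2,1[$). As $\f_\e$ is a cusp form, Green's formula on $\G_0(q)\backslash\H$ has no boundary contribution, so
\[
 \inprod{(\Delta-\l(\e))\f_\e}{E_\infty(\cdot,s(\e),\chi)}=\inprod{\f_\e}{(\Delta-\l(\e))E_\infty(\cdot,s(\e),\chi)}=0 ,
\]
and dividing by $-\e$ yields $\inprod{L^{(1)}\f_\e}{E_\infty(\cdot,s(\e),\chi)}+\e\,\inprod{L^{(2)}\f_\e}{E_\infty(\cdot,s(\e),\chi)}=0$. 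The cusp forms $\f_\e$ decay at the cusps at a rate uniform in $\e$ (their eigenvalues stay in $[a,b]$), so both pairings remain bounded, and by dominated convergence together with $\f_\e\to\f$ in $C^\infty_{\mathrm{loc}}$ (elliptic regularity) and $s(\e)\to s_0$, letting $\e\to 0$ gives $\inprod{L^{(1)}\f}{E_\infty(\cdot,s_0,\chi)}=0$, i.e. $I(s_0,\f,\G_0(q),\chi,f)=0$. (Equivalently one differentiates at $\e=0$: the first variation $\dot\f$ is again cuspidal, $(\Delta-\l_0)\dot\f=\dot\l\,\f-L^{(1)}\f$, and pairing with $E_\infty(\cdot,s_0,\chi)$ kills the left side and the term $\dot\l\,\inprod{\f}{E_\infty(\cdot,s_0,\chi)}$, leaving the same conclusion.)

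The main obstacle is the analysis behind the perturbation step, not the algebra: because $f$ is cuspidal only \emph{at infinity}, $L^{(1)}$ and $L^{(2)}$ are not $\Delta$-bounded near the other cusps, so ordinary analytic perturbation theory does not apply and one must use the asymptotic perturbation theory (in Kato's sense) developed earlier, and take care that $\f_\e$ depends on $\e$ continuously — resp. differentiably — in a topology strong enough both to see that the constant terms at the cusps remain zero and to justify passing to the limit in the Eisenstein pairing. Granting that, the only genuinely new input is the chain: no residual eigenvalue near $\l_0$ $\Rightarrow$ $\f_\e$ stays cuspidal $\Rightarrow$ its variation is orthogonal to $E_\infty(\cdot,s_0,\chi)$ $\Rightarrow$ $I(s_0,\ldots)=0$.
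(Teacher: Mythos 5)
Your proposal is essentially correct and reaches the same conclusion, but it takes a genuinely different route from the paper's proof of Theorem \ref{integraliszero} (which is the precise version of Theorem \ref{PSiszero}). The paper never differentiates the eigenfunction and never invokes Green's formula on the perturbed eigenequation. Instead it takes Kato's explicit formula \eqref{second-term-in-projection} for the first-order coefficient $P_j^{11}$ of the eigenprojection, manipulates it algebraically to rewrite the Phillips--Sarnak integral as $\int (L(0)-\lambda)S_\lambda L_\e\f\cdot\overline{E_\infty}\,d\mu$, proves (via the Maa\ss--Selberg relation and Assumption \ref{goaway}, which is the portion of Conjecture \ref{myconjecture} concerning the cusp at infinity) that $S_\lambda L_\e\f$ decays exponentially at $\infty$, and then kills the integral with a \emph{smoothly truncated} Eisenstein series and a careful $T\to\infty$ limit. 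You instead work directly with the perturbed eigenfunctions $\f_\e$, observe they are cuspidal (via Conjecture \ref{myconjecture}), apply Green's formula against $E_\infty(\cdot,s(\e),\chi)$ at each fixed $\e$, divide by $\e$, and pass to the limit. Your approach is conceptually cleaner in that it sidesteps the pointwise control of $S_\lambda L_\e\f$ entirely; what you pay for it is that the limit $\e\to 0$ must be justified by uniform-in-$\e$ decay estimates for $\f_\e$ and its first derivatives at \emph{all} cusps, which you assert rather than prove. The paper avoids this uniformity issue by working at $\e=0$ throughout and localising the analysis to a single truncation parameter.

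Two points deserve care. First, Kato's asymptotic theory yields the total projection $P_j(\e)$ and possibly splitting eigenvalues $\mu_{jk}(\e)$ rather than a single smooth branch $(\lambda(\e),\f_\e)$; you should either run your argument with the total projection $P_j(\e)\f$ (still cuspidal, and $(L_\e-\lambda_0)P_j(\e)\f=O(\e)$ in a usable sense) or with each individual eigenbranch and sum. Second, what Conjecture \ref{myconjecture} gives directly is that $\hat\f_\e=U(\e)\f_\e$ is cuspidal for $A(\G,\chi_\e)$; the conjugated function $\f_\e=U(\e)^{-1}\hat\f_\e$ does not literally have vanishing constant terms, only exponential decay at every cusp (the unitary $U(\e)$ is a unimodular oscillating multiplier). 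That decay suffices for your Green's-formula step and your dominated convergence, but the sentence ``$\f_\e$ is cuspidal'' should be replaced by that precise statement. With these two refinements your strategy is a valid alternative to the paper's.
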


Our proof of this uses asymptotic perturbation theory. 
Since we are not assuming that $f$ is a cusp form the perturbation of the automorphic Laplacian induced from (\ref{eq:2}) is not necessarily regular, and standard Kato-Rellich theory does not apply. But for eigenvalues which are \emph{stable} Kato's theory of \emph{asymptotic} perturbation \cite[Ch VIII]{Kato:1976aa} still applies. We show that \emph{small} eigenvalues are stable, and we get an asymptotic first order expansion of the eigenprojections and a second order expansion of the eigenvalues. The main challenge in proving stability is the following result:
\begin{theorem}\label{vacation}
  Small eigenvalues move continuously under character deformations (\ref{eq:2}).
\end{theorem}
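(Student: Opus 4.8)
\emph{Plan of proof.} The strategy is to transfer the whole family to a single Hilbert space, recognise $A(\G,\e)$ as the operator of a quadratic form which is a polynomial in $\e$ but whose lower order part is \emph{not} relatively $\Delta$-bounded, and then establish continuity of each small eigenvalue by a two--sided min--max estimate; the real work lies in the cusps where $f$ is not cuspidal. Concretely, put $\tilde F(z)=\Re\int_{z_0}^{z}f(w)\,dw$; the weight--$2$ modularity of $f$ gives $\tilde F(\g z)=\tilde F(z)+\Re\int_{z_0}^{\g z_0}f$, so multiplication by $e^{-2\pi i\e\tilde F}$ is unitary from the $\chi_\e$-automorphic $L^2$-functions onto $\rum{\GmodH,\chi_0'}$ and intertwines $A(\G,\e)$ with $L(\e):=D_\e^{*}D_\e$, where $D_\e=d+i\e\omega$ and $\omega=2\pi\Re(f\,dz)=2\pi\,d\tilde F$ is a closed real $1$-form. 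Thus $L(\e)$ is the self-adjoint operator of the non-negative closed form $Q_\e(u)=\norm{D_\e u}^{2}=\norm{du}^{2}+2\e\,\Re\inprod{du}{iu\omega}+\e^{2}\norm{u\omega}^{2}$, i.e.\ $L(\e)=\Delta+\e B_1+\e^{2}B_2$ with $B_2$ multiplication by $|\omega|^{2}=(2\pi)^{2}y^{2}|f|^{2}$ and $B_1$ first order. Near $\infty$, where $f$ is cuspidal, $|\omega|$ decays exponentially; near any other cusp $\a$ the constant term of $f$ at $\a$ may be non-zero and then $|\omega|$ grows linearly in the cuspidal coordinate, so $B_1,B_2$ are genuinely unbounded, $\mathrm{Dom}(Q_\e)=\{u\in H^{1}:u\omega\in L^{2}\}\subsetneq\mathrm{Dom}(Q_0)=H^{1}$, and Kato--Rellich theory does not apply.

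\emph{Cusp estimate and essential spectrum.} The technical core is the claim that for $Y$ large and $|\e|$ small, every $u$ supported in a cuspidal neighbourhood $\{\mathrm{ht}>Y\}$ satisfies $Q_\e(u)\ge(\tfrac14-\eta(Y))\norm u^{2}$ with $\eta(Y)\to0$. One proves this cusp by cusp using the Fourier expansion adapted to $\chi_\e$: on the zeroth mode the dangerous term $i\e\omega$ is, up to an exponentially small correction, the exact form $i\e\,d(2\pi\Re\!\int\!f)$ in the radial direction and can be gauged away, reducing the radial operator to $-y^{2}\partial_y^{2}\ge\tfrac14$; the non-zero modes contribute $\ge\tfrac14+(\text{large})$; and $\e^{2}|\omega|^{2}\ge0$ only helps. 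The same computation yields $\inf\sigma_{\mathrm{ess}}(L(\e))=\tfrac14$ for small $\e$: the cusp $\infty$ always contributes $[\tfrac14,\infty)$, while no cusp can depress the bottom because $\e^{2}|\omega|^{2}\ge0$ and the first order part is gaugeable on the zeroth mode. In particular every small eigenvalue lies strictly below $\sigma_{\mathrm{ess}}(L(\e))$ and is governed by the Courant--Fischer min--max.

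\emph{Continuity of a small eigenvalue $\l_n:=\l_n(\Delta)<\tfrac14$}, with $\Delta$-eigenfunctions $h_0,\dots,h_n$. For the bound $\limsup_{\e\to0}\l_n(L(\e))\le\l_n$, use as an $(n+1)$-dimensional trial space the smooth truncations $\phi_{R(\e)}h_j$, where $\phi_R$ cuts off at height $R$: since the $h_j$ decay at the cusps like $y^{1-s_0}$ (or exponentially, if cuspidal) while $|\omega|$ grows only like $y$, the quantities $\e^{2}\norm{\phi_{R}h_j\,\omega}^{2}$ and the associated cross terms grow only polynomially in $R$, so taking $R(\e)=\e^{-\b}$ with $\b>0$ small forces them to $0$ and the maximal Rayleigh quotient of the trial space tends to $\l_n$. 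For $\liminf_{\e\to0}\l_n(L(\e))\ge\l_n$, suppose instead that along some $\e_m\to0$ there are orthonormal eigenfunctions $g_0^{m},\dots,g_n^{m}$ with eigenvalues $\le\l_n-\delta$. These are bounded in $H^{1}$; testing the eigenvalue equation against $\phi_\a^{2}g_i^{m}$ and using the cusp estimate and an IMS partition of unity (with gradients $O((\log Y)^{-1})$) forces $\norm{\phi_\a g_i^{m}}^{2}=O_\delta((\log Y)^{-2})$ for every cusp $\a$, so after fixing $Y$ large the $g_i^{m}$ are uniformly concentrated in a fixed compact part of $\GmodH$. Passing to the limit (Rellich on the compact part, weak $H^{1}$-limits) one obtains an $(n+1)$-dimensional subspace of $H^{1}(\GmodH,\chi_0')$ whose maximal Rayleigh quotient for $\Delta$ is $<\l_n$, contradicting the min--max value of $\l_n(\Delta)$. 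Hence $\l_n(L(\e))\to\l_n$; running the same argument with an arbitrary small base point $\e_0$ (the deformation re-based at $\chi_{\e_0}$ with the same $f$ is of the identical type) gives continuity of every small eigenvalue on a neighbourhood of $0$.

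\emph{Main obstacle.} The crux is the genuinely unbounded, non-Kato--Rellich perturbation at the cusps where $f$ is not cuspidal: there $L(\e)-\Delta$ is not relatively bounded, $\mathrm{Dom}(Q_\e)\subsetneq\mathrm{Dom}(Q_0)$ (so the unperturbed eigenfunctions are not even in the form domain of $L(\e)$), and normalised functions concentrated deep in such a cusp can make $\e B_1$ and $\e^{2}B_2$ arbitrarily large. What rescues the argument is that the two apparent obstructions are in fact harmless: $\e^{2}|\omega|^{2}\ge0$ is \emph{confining}, pinning $\inf\sigma_{\mathrm{ess}}$ and the cuspidal contributions at $\tfrac14$, and the first order part is the \emph{exact} form $d(2\pi\Re\!\int\!f)$, hence removable by a gauge change on the only troublesome (zeroth) cuspidal mode. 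Making the cusp estimate and the $O((\log Y)^{-2})$ mass bound uniform in $\e$, and calibrating the truncation radius $R(\e)$ to balance the polynomial growth of $|\omega|$ against the polynomial decay of the eigenfunctions, are the delicate points.
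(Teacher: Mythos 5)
Your argument is correct, but it is a genuinely different route from the one taken in the paper. The paper (Lemma \ref{small-eigenvalues}) proves continuity of the spectral counting function $N(T,\e)$ for $T<1/4$ by inserting the Gaussian test function $h(r)=e^{-zr^2}$ into the Selberg trace formula with characters, multiplying by $e^{-z/4}\laplace(f)(z)e^{zT}/z$, and inverse Laplace transforming in $z$; the crucial observation is that for $T<1/4$ this inversion kills every geometric term that is discontinuous at $\e=0$ (the last two lines of (\ref{sporformlen})), leaving only the eigenvalue sum and the manifestly continuous hyperbolic sum, after which dominated convergence and a difference--quotient approximation finish the job. You instead work directly with the quadratic form $Q_\e(u)=\norm{(d-i\e\omega)u}^2$ on the fixed space $\rum{\G,\chi_0}$, establish a Hardy-type cusp estimate $Q_\e(u)\geq(\tfrac14-\eta(Y))\norm u^2$ for $u$ supported high in a cusp (the key being that $\omega=d\tilde F$ is exact, so the linear term can be gauged away on the zeroth Fourier mode while $\e^2|\omega|^2\geq 0$ only helps), and deduce both bounds on $\l_n(L(\e))$ by min--max: the upper bound from $\e$-calibrated truncations of the unperturbed eigenfunctions, the lower bound by IMS localization with logarithmic cutoffs forcing concentration of would-be low eigenfunctions into a fixed compact piece, followed by Rellich. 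Both proofs are valid. The trace-formula argument is compact and delivers continuity of the full counting function on $[0,T]$ in one stroke, but it imports the entire Selberg machinery (with characters) and requires one to see that the parabolic/continuous-spectrum terms really are killed by the chosen smoothing; your variational proof is more elementary and more structural -- it makes transparent exactly \emph{why} the failure of Kato--Rellich at the non-cuspidal cusps is harmless (the exactness of $\omega$ and the sign of $\e^2|\omega|^2$ pin the cuspidal contribution at $1/4$) -- at the cost of several interlocking analytic estimates (uniform cusp bound in $\e$, Gram-matrix control of the truncated trial space, boundedness of $d(\phi_0 g^m_i)$) which you would have to write out carefully. The two approaches also generalize differently: yours extends naturally to any perturbation given by an exact harmonic $1$-form, the paper's to any situation where one controls the hyperbolic length spectrum.
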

In Section \ref{pert2} we prove  Theorem \ref{vacation} using the Selberg trace formula with characters.  We note that Theorem \ref{vacation}  (See Corollary \ref{partytime} for a more precise statement) and Selberg's Conjecture \ref{selbergconjecture} almost immediately implies Conjecture \ref{myconjecture}.

 It turns out that we may write  $I(s,\f,\G_0(q),\chi,f)$ as 
\begin{equation}\label{approaching}
  \int_{\G_0(q)\backslash\H}L_\e\f(z)\overline{E_\infty(z,\overline{s},\chi)}d\mu(z)
\end{equation}
where $L_\e$ is the first variation of the Laplacian with respect to the character perturbation (\ref{eq:2}). This integral is usually denoted the Phillip-Sarnak integral. It appeared in \cite{Phillips:1991aa,Phillips:1994aa}, and subsequently in many other papers. On Conjecture \ref{myconjecture} Kato's formula for the first variation of the eigenprojections then allows us to conclude that the Phillips-Sarnak integral is zero for $s=s_0$ where $\f$ has eigenvalue $\lambda=s_0(1-s_0)$ when $0<\lambda<1/4$. We refer to Section \ref{perturbation} for further details.

In Section \ref{number-theory} we use number theoretic methods to prove the following result:
\begin{theorem} \label{PSisnotzero}
Given a primitive Hecke normalized Maa\ss{} cusp form $\f$ related to a small eigenvalue $s_0(1-s_0)$, there exist a primitive Dirichlet character $\psi$ and a weight 2 modular form $f\in M_2(\Gamma_0(q'))$ such that
  \begin{equation*}
     I(s_0,\f\otimes\psi,\G_0(q'),\chi',f)\neq 0.
  \end{equation*}
Here $q'$, $\chi'$ is the level and the nebentypus for the twisted Maa\ss{} form
 $\f\otimes\psi$.
\end{theorem}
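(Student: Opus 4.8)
The plan is to unfold the Phillips--Sarnak integral $I(s_0,\f\otimes\psi,\G_0(q'),\chi',f)$ explicitly and express it as an Euler product of $L$-functions, then choose the twist $\psi$ and the weight~$2$ form $f$ so that no factor vanishes at the special point. First I would use the rewriting (\ref{approaching}), together with the identity $y^2(f\,\partial_{\overline z}\f+\overline f\,\partial_z\f)=\partial_z(y^2\overline f\f)+\partial_{\overline z}(y^2 f\f)$-type manipulations (integration by parts on $\GmodH$), to see that $I$ is essentially a Rankin--Selberg type integral: unfolding the Eisenstein series $E_\infty(z,\overline s,\chi')$ against the product of the weight-$2$ form $f$ and the Maa\ss{} form $\f\otimes\psi$ produces, up to elementary Gamma-factors and a nonzero constant, the Dirichlet series $\sum_{n\geq 1} a_f(n)\overline{\rho(n)}\chi'(n) n^{-s}$ evaluated at $s=s_0$, where $a_f(n)$ are the Fourier coefficients of $f$ and $\rho(n)$ those of $\f\otimes\psi$. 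The point $s=s_0$ is on the edge of the critical strip for this Rankin--Selberg convolution (since $\f$ has eigenvalue $s_0(1-s_0)$ with $1/2<s_0<1$), which is exactly where one has non-vanishing theorems available.

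The second step is to produce a good choice of $f$. Here I would take $f$ to be (a suitable oldform lift of) the weight-$2$ Eisenstein-type or cusp form whose $L$-function is $L(s,g)$ for a well-chosen holomorphic newform $g$ of weight $2$, or more flexibly an Eisenstein series attached to two Dirichlet characters, so that the convolution $\sum a_f(n)\overline{\rho(n)}\chi'(n)n^{-s}$ factors as a product $L(s,\f\otimes\psi\otimes\theta_1)L(s,\f\otimes\psi\otimes\theta_2)$ of two $GL_2\times GL_1$ twisted $L$-functions (for Dirichlet characters $\theta_i$ built from $\psi$, $\chi_0$ and the data of $f$). The factorization is the standard one coming from the fact that a weight-$2$ Eisenstein series has Fourier coefficients that are a Dirichlet convolution of two characters. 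One must make sure $f$ really lies in $M_2^\infty(\G_0(q'))$, i.e. is cuspidal at infinity; this forces a mild constraint on the characters defining the Eisenstein series but is easy to arrange at the cost of enlarging the level $q'$.

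The third and crucial step is the non-vanishing input: one needs $L(s_0,\f\otimes\psi\otimes\theta_i)\neq 0$ for a suitable primitive $\psi$. Since $s_0\in(1/2,1)$ lies to the right of the central point, this follows from the known non-vanishing of $GL_2$ automorphic $L$-functions at points with $\Re s>1/2$ away from the central point --- more precisely, one invokes the result (in the style of the work cited in the excerpt on absence of Siegel zeros for $GL_2$, and its extensions) that a Hecke--Maa\ss{} cusp form $L$-function twisted by a primitive character $\psi$ of large conductor is nonzero at any fixed real point in $(1/2,1)$; averaging over $\psi$ in a suitable family and using a mollified first-moment (or the simpler fact that the only possible real zero is a Siegel-type zero which can be pushed away by varying $\psi$) yields a valid choice. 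The main obstacle is precisely this: ensuring \emph{simultaneous} non-vanishing of the relevant twisted $L$-values at the off-center real point $s_0$ while keeping $f$ cuspidal at infinity and keeping track of the interaction between the auxiliary character $\psi$, the nebentypus $\chi'$, and the characters hidden inside $f$; this is where the arithmetic (Hecke operators, twisted-$L$-function non-vanishing) enters, exactly as advertised in the introduction.
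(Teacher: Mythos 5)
Your overall strategy---unfold to a Rankin--Selberg integral, take $f$ to be a weight-$2$ Eisenstein series so the convolution factors into $GL_2\times GL_1$ pieces, and then twist by a primitive $\psi$ to force non-vanishing---is indeed the paper's strategy. But several of the details you supply are off in ways that would prevent the argument from closing, and one of them points at a conceptual confusion.

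First, you omit the parity reduction. If $\f$ is even, the unfolded integral is \emph{identically} zero (the paper's Theorem \ref{etskridt}): the Fourier coefficients enter as $\rho_\f(n)-\rho_\f(-n)=(1-\varepsilon_\f)\rho_\f(n)$. So before anything else one must twist by an \emph{odd} primitive character to make $\f$ odd (using that parities multiply, (\ref{parity-consideration})), and the subsequent non-vanishing twist $\psi$ must then be taken \emph{even} so as not to destroy oddness. Your proposal never addresses this, and without it the whole unfolding yields $0=0$.

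Second, your factorization is wrong in a structurally important way. You write the convolution as $L(s,\f\otimes\psi\otimes\theta_1)L(s,\f\otimes\psi\otimes\theta_2)$, two factors evaluated at the \emph{same} point. With $f$ built from $E_2$ (coefficients $\sigma_1(n)$, a Dirichlet convolution of $1$ with $n$, not of two Dirichlet characters) the factorization involves a \emph{shift}: one obtains, up to an elementary nonzero factor,
\begin{equation*}
\frac{\Lambda(s_0-1/2,\f\otimes\psi)\,\Lambda(s_0+1/2,\f\otimes\psi)}{\Lambda(2s_0,\chi')}.
\end{equation*}
The shift is precisely what makes the argument work: $\Lambda(s_0+1/2,\cdot)$ and $\Lambda(2s_0,\cdot)$ are evaluated with $\Re>1$, in the region of absolute convergence, hence automatically nonzero, and only the single factor $\Lambda(s_0-1/2,\f\otimes\psi)$ (with $s_0-1/2\in(0,1/2)$, inside the critical strip) is in doubt. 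Your version, with both factors at the same point $s_0$, would leave you having to control two critical-strip values simultaneously for no reason.

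Third, the non-vanishing input you invoke is the wrong one, and this matters. You say the needed non-vanishing ``follows from\ldots absence of Siegel zeros for $GL_2$'' or that ``the only possible real zero is a Siegel-type zero which can be pushed away by varying $\psi$.'' That is exactly the limitation the paper is designed to overcome: the Balslev--Venkov approach (discussed in the remarks) used the no-Siegel-zero theorem, which only controls real zeros very near $s=1$, and hence only handled eigenvalues very close to $1/4$. The point here is $s_0-1/2$, which can be anywhere in $(0,1/2)$, and a real zero there is not a Siegel zero. What is actually needed (and what the paper uses) is Rohrlich's theorem on non-vanishing of $L(s,\f\otimes\psi)$ at an \emph{arbitrary fixed} point over infinitely many twists. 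Substituting a Siegel-zero argument here would reproduce the Balslev--Venkov restriction, not prove the full statement.

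Finally, a smaller point: making $f$ cuspidal at infinity is not merely ``easy to arrange''---the paper's construction $G_{q_1,q_2}(z)=G_{q_1}(z)-G_{q_1}(q_2 z)$ requires $q'$ composite, which is why the preliminary twist is also used to push the level up away from primes. With the parity step, the shifted factorization, and Rohrlich's theorem in place of the Siegel-zero argument, your outline matches the paper; as written, it would not close.
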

Comparing this with Theorem \ref{PSiszero} the existence of a small eigenvalue easily leads to a contradiction proving Theorem \ref{selbergconjecture}.

To prove Theorem  \ref{PSisnotzero} we may arrange that $\f$ is odd (If $\f$ is even the integral is zero identically). We then unfold the integral (\ref{approaching}) and  find that it essentially  equals the Rankin-Selberg convolution of $f$ and $\f$ evaluated at a special point. To be able to handle this convolution we let $f$ be a (linear combination of) weight 2 Eisenstein series. The Fourier coefficients of such a series is a divisor sum and it turns out that the Rankin-Selberg convolution can be expressed in terms of the $L$-functions of $\f$ and $\chi$. Going through the details we find that 
\begin{equation*}
  I(s_0,\f,\G_0(q),\chi,f)=c\frac{\Lambda(s_0+1/2,\f)\Lambda(s_0-1/2,\f)}{\Lambda(2s_0,\chi)}.
\end{equation*} Here $c$ is a simple factor which is clearly non-zero.  The functions $\Lambda(s,\f)$ and $\Lambda(s,\chi)$ are the completed $L$-functions of $\f$ and $\chi$. Since $1/2<s_0<1$ only  $\Lambda(s_0-1/2,\f)$ can be non-zero since the two other functions are evaluated in the domain of absolute convergence. But by twisting by primitive characters we may arrange that this is not the case, using a theorem of Rohrlich.  Going through the argument above with the twisted eigenform $\f\otimes\psi$ we arrive at Theorem \ref{PSisnotzero}. We refer to Section \ref{number-theory} for further details.

We conclude with Section \ref{further} where we give 2 more conjectures which we can prove to be equivalent to Selberg's Conjecture \ref{selbergconjecture}, one of them involving an Eisenstein series twisted by modular symbols introduced by Goldfeld \cite{Goldfeld:1999aa}.

{\bf Acknowledgements:}\newline
It is with great pleasure that I thank Erik Balslev, Roelof Bruggeman, Yiannis N. Petridis, and Alexei B. Venkov for useful discussions in relation to this work. I also thank Jeffrey C. Lagarias, Peter Sarnak, and  Fredrik Str\"omberg  for comments on an earlier version of this paper.

%
%

\section{The automorphic Laplacian and the Selberg trace formula}\label{basics}
We start by reviewing some more or less standard facts and results about the spectrum of the Laplacian. Basic references for this section are  \cite{Selberg:1989aa, Selberg:1956aa,
Venkov:1982aa,Venkov:1990aa,Iwaniec:2002aa}.
\subsection{The automorphic Laplacian}
Let $\H$ be the upper halfplane equipped with the 
hyperbolic measure 
\begin{equation*}
  ds^2=\frac{dx^2+dy^2}{y^2}
\end{equation*}
and corresponding volume form 
\begin{equation*}
  d\mu(z)=\frac{dxdy}{y^2}.
\end{equation*}
Consider a discrete cofinite subgroup $\G$ of  $\psl$. The group $\G$ acts on the upper half plane $\H$ by linear fractional transformations, and we fix  a fundamental domain $\F_\Gamma$ for this action. The action extends to the extended real line $\overline{\R}$ and since $\G$ is cofinite there is a \emph{finite} number $k$ of $\G$-inequivalent fixpoints (cusps) on $\overline{\R}$, which we denote by $\a_i$ $i=1\ldots k.$ (If $\G$ is cocompact the set of cusps is empty. We shall mainly be interested in the non-cocompact case, but since many of our statements are true in the cocompact case also we do not assume anything about cocompactness). The stabilizer of a cusp $\G_{\a_i}$, $i=1\ldots k$, is a maximal parabolic subgroup. This is cyclic with generator $\gamma_{\a_i}$. There exist scaling matrices $\sigma_i\in \psl$ such that
\begin{equation*}
\sigma_i \infty=\a_i, \qquad \sigma_i^{-1}\gamma_{\a_i}\sigma_i=\mattwo{1}{1}{0}{1}.
\end{equation*}
We may assume that the fundamental domain $\F_\Gamma$ is a disjoint union
\begin{equation}\label{funddomain}
  \F_\Gamma=\F_0\cup_{j=1}^k\sigma_j \F_\infty^Y
\end{equation}
where  the closure of $\F_0$ is compact, $Y$ is some fixed number, and \begin{equation*}\F_\infty^Y=\{z\in \H\vert 1/2<\Re(z)\leq 1/2, \Im{z}>Y \}.\end{equation*}
We define the \emph{invariant height} by
\begin{equation}
y_\G(z)=\max_{i}\max_{\g\in\G}\Im (\sigma_i^{-1}\gamma z).
\end{equation} 
This measures how heigh up in the cusps the point $z$ is located. Let $\chi:\Gamma\to S^1$ a multiplicative character i.e. a
 one-dimensional unitary representation. 
Consider the usual Hilbert space of automorphic square integrable functions:
 \begin{equation}
   \label{eq:3}
   \rum{\G,\chi}:=\left\{f:\H\to\C \left\vert \begin{array}{l}f \textrm{ measurable}\\f(\g z)=\chi{(\gamma)}f(z)\end{array}, \, \int_{\F_\G}\abs{f(z)}^2d\mu(z)<\infty \right.\right\} 
 \end{equation}
with the usual inner product
\begin{equation*}
  \inprod{f}{g}=\int_{\F_\G}f(z)\overline{g(z)}d\mu (z)
\end{equation*}

 The \emph{automorphic Laplacian} $A(\G,\chi)$ is a non-negative selfadjoint operator on $ \rum{\G,\chi}$ defined as the closure of $(\Delta, D_{\G,\chi})$ where
\begin{equation}\label{densesubspace}
 \Delta=-y^2\left(\frac{\partial^2}{\partial x^2}+\frac{\partial^2}{\partial y^2}\right)  
\end{equation}
and $D_{\G,\chi}$ consists of smooth function in $\rum{\G,\chi}$ with all derivatives in $x$,$y$  exponentially decaying when $y_\G(z)\to\infty$.
If $\chi(\g_{\a_i})=1$ for some cusp the operator $A(\G,\chi)$ has a continuous spectrum. We call such a cusp \emph{open}, and the remaining we call \emph{closed}. The continuous spectrum may be described in terms of \emph{Eisenstein series}. For $i$ with $\chi(\g_{\a_i})=1$ we define the Eisenstein series
\begin{equation*}
  E_i(z,s,\chi)=\sum_{\g\in\G_i\backslash\G}\overline{\chi(\gamma)}\Im(\sigma_i^{-1}\g z)^s, \qquad \textrm{ for }\Re(s)>1.
\end{equation*} These Eisenstein series has meromorphic continuation to $s\in\C$ and satisfies a functional equation
\begin{equation*}
  E(z,1-s,\chi)=\Phi(s,\chi)E(z,s,\chi).
\end{equation*}
Here $E(z,s,\chi)$ is the vector of Eisenstein series related to open cusps, and $\Phi(s,\chi)$ is called the \emph{scattering matrix}. Its determinant $\phi(s,\chi)=\det\Phi(s,\chi)$ is called the scattering determinant. 
The Eisenstein series satisfies 
\begin{equation}\label{eisenstein-eigenfunction}
\begin{split}
 \Delta E_i(z,s,\chi)=&s(1-s)E_i(z,s,\chi)\\
E_i(\g z,s,\chi)=&\chi(\g)E_i(z,s,\chi), \qquad \g\in \G.
\end{split}
\end{equation}

All poles of Eisenstein series in $\Re(s)>1/2$ are real, simple, and the corresponding residues are eigenfunctions of the automorphic Laplacian $A(\G,\chi)$. We call such eigenfunctions \emph{residual eigenfunctions} and  the corresponding eigenvalues \emph{residual eigenvalues}. Residual eigenvalues lie in the interval $[0,1/4[$.  At a point $1/2<s_0<1$ we have
\begin{equation}\label{resi}
  \inprod{\res_{s_0}E_i(z,s,\chi)}{\res_{s_0}E_j(z,s,\chi)}=\res_{s_0}\Phi_{i,j}(s,\chi)
\end{equation}
which follows from the Maa\ss-Selberg relation (See  \cite[p. 652]{Selberg:1989aa}). In particular we have that 
\begin{equation}
  \norm{\res_{s_0}E_i(z,s,\chi)}^2=\res_{s_0}\Phi_{i,i}(s,\chi)
\end{equation} which shows that $\res_{s_0}\Phi_{i,i}(s,\chi)$ is non-negative. The zero Fourier coefficient (with respect to the cusp $\a_j$) of a residual eigenfunction $\res_{s_0}E_i(z,s,\chi)$ equals 
\begin{equation}
   \int_0^1 \res_{s_0}E_i(\sigma_j z,s,\chi)dx=\res_{s_0}\Phi_{ij}(s,\chi)y^{1-s_0}
\end{equation}

The discrete spectrum 
\begin{equation*}
  0\leq \lambda_0(\chi)\leq \lambda_1(\chi) \leq \lambda_2(\chi)\leq \ldots, \lambda_i(\chi)\to \infty
\end{equation*}
listed according to multiplicity consists of a finite number of residual eigenvalues $s_j(\chi)(1-s_j(\chi))$, where $1/2<s_j(\chi)\leq 1$ is a pole of an Eisenstein series, and a finite or infinite number of \emph{cuspidal eigenvalues}, i.e. eigenvalues such that the corresponding eigenfunction $\varphi$ is cuspidal i.e 
\begin{equation*}
  \int_0^1 \f(\sigma_iz)dx=0
\end{equation*} for all \emph{open} cusps $\a_i$. Such eigenfunctions are called \emph{Maa\ss{} cusp forms}. The continuous spectrum of $A(\G,\chi)$ consists of the set $[1/4,\infty[$ with multiplicity equal to the number of open cusps. The Eisenstein series $E_i(z,1/2+it,\chi)$ are \lq generalized eigenfunctions\rq (or sometimes called \lq eigenpackets\rq). We emphasize that they are not square integrable. The Eisenstein series are orthogonal to Maa\ss{} cusp forms in the following sense: For all $s$ which are not poles of the Eisenstein series
\begin{equation}\label{orthogonal-almost}
\int_{\F_\G}\f(z)\overline{E_i(z,s,\chi)}d\mu(z)=0.
\end{equation}
Since $\f$ is a cusp form the integral is absolutely convergent. Of course since $E_i(z,s,\chi)$ is not square integrable this cannot be interpreted as the usual inner product. 

\subsection{Cuspidal eigenfunctions}
Consider a cuspidal eigenvalue $\lambda=s(1-s)$ ($\Re{s}\geq 1/2$) of $A(\G,\chi)$, and let $S_\lambda(\Gamma,\chi)$ be the set of such cuspidal eigenfunctions. Any $\f\in S_\lambda(\Gamma,\chi)$ is real analytic and admits a Fourier expansion 
\begin{equation}
  \label{fourier-expansion-open}
  \f(\sigma_{i}z)=\sum_{n\neq 0}\rho_\f(n, \a_i)\sqrt{y}K_{s-1/2}(2\pi \abs{n}y)e^{2\pi i nx}
\end{equation}
at open cusps $\a_i$. Here $K_\nu(y)$ is the $K$-Besselfunction of order $\nu$. At closed cusps $\a_i$, where $\chi(\g_{\a_i})=\exp(2\pi i\alpha_{i})\neq 1$, $0<\alpha_i<1$,  $\f$ admits a Fourier expansion 
\begin{equation}
  \label{fourier-expansion-closed}
  \f(\sigma_{i}z)=\sum_{n\in\Z}\rho_\f(n, \a_i)\sqrt{y}K_{s-1/2}(2\pi \abs{n+\alpha_i}y)e^{2\pi i (n+\alpha_i)x}
\end{equation}
The Fourier coefficients satisfy the \lq trivial\rq{}  bound
\begin{equation}
  \label{trivial-bound-on-Fourier}
 \rho_\f(n, \a_i)=O(\sqrt{n}).
\end{equation}
\begin{proposition}\label{gooddecay}
Let  $\f\in S_\lambda(\Gamma,\chi)$. Then $\f$ and all its derivatives decay exponentially at all cusps.
\end{proposition}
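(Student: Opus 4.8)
To prove Proposition~\ref{gooddecay}, the plan is to read the decay directly off the Fourier expansions \eqref{fourier-expansion-open}, \eqref{fourier-expansion-closed} and the exponential decay of the $K$-Bessel function. Since there are only finitely many cusps it suffices to treat a single cusp $\a_i$; and since $\abs{\f(\g z)}=\abs{\f(z)}$ for $\g\in\G$ while, for a suitable $i$ and $\g\in\G$, $y_\G(z)=\Im(\sigma_i^{-1}\g z)$, it is enough to show that $\f\circ\sigma_i$ and all its $(x,y)$-derivatives are $O(e^{-cy})$ as $y=\Im z\to\infty$, for some $c=c(\a_i)>0$. The passage back to $\f$ itself I would handle at the very end.

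The heart of the matter is that every frequency occurring in \eqref{fourier-expansion-open}, \eqref{fourier-expansion-closed} is bounded away from $0$: at an open cusp the sum runs over $n\neq0$, so the frequency $\abs{n}\geq1$; at a closed cusp $\chi(\g_{\a_i})=e^{2\pi i\alpha_i}$ with $0<\alpha_i<1$, so the frequency $\abs{n+\alpha_i}\geq\delta_i:=\min(\alpha_i,1-\alpha_i)>0$ for all $n\in\Z$. Writing the generic frequency as $r_n\geq\delta_i$ (with $\delta_i=1$ in the open case), I would use the classical bound $K_\nu(t)=O_\nu(t^{-1/2}e^{-t})$ as $t\to+\infty$ for the fixed order $\nu=s-1/2$, together with the recurrence $2K_\nu'(t)=-\bigl(K_{\nu-1}(t)+K_{\nu+1}(t)\bigr)$, which upon iteration gives the same bound for every derivative $K_\nu^{(m)}$; hence $\sqrt{y}\,\bigl|K_\nu^{(m)}(2\pi r_ny)\bigr|\ll_{m}e^{-c r_n y}$ for $y\geq1$ and some fixed $c>0$. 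Applying $\partial_x^a\partial_y^b$ termwise to \eqref{fourier-expansion-open} or \eqref{fourier-expansion-closed}, the $\partial_x$'s contribute a power of $n+\alpha_i$ (from $e^{2\pi i(n+\alpha_i)x}$) and the $\partial_y$'s contribute, by the chain rule, powers of $r_n$ and functions of $y$ bounded for $y\geq1$; combined with the trivial bound $\rho_\f(n,\a_i)=O(\sqrt{\abs n})$ of \eqref{trivial-bound-on-Fourier}, the $n$-th term is $\ll Q(\abs n)e^{-c r_n y}$ for a fixed polynomial $Q$. Since $r_n\geq\delta_i$, the series $\sum_n Q(\abs n)e^{-c r_n y}$ converges absolutely and uniformly on $\{y\geq1\}$ — which legitimizes the termwise differentiation — and, for $y\geq1$, is $O\bigl(e^{-\frac{c\delta_i}{2}y}\bigr)$ because $e^{-c r_n y}\leq e^{-\frac{c}{2}r_n}e^{-\frac{c\delta_i}{2}y}$ while $\sum_n Q(\abs n)e^{-\frac{c}{2}r_n}<\infty$. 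This gives the required decay of $\f\circ\sigma_i$ and all its derivatives.

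To transfer back, take $z$ with $y_\G(z)$ large, choose $i$ and $\g\in\G$ with $\Im(\sigma_i^{-1}\g z)=y_\G(z)$, put $w=\sigma_i^{-1}\g z$ (so $\f(z)=\overline{\chi(\g)}(\f\circ\sigma_i)(w)$), and write $\sigma_i^{-1}\g=\mattwo{*}{*}{c}{d}$. The chain rule expressing the $(x,y)$-derivatives at $z$ through those at $w$ introduces only factors that are polynomials in $(cz+d)^{-1}$; but $\abs{cz+d}^{-2}=\Im w/\Im z$, and in the relevant range $\Im z$ and $\Im w=y_\G(z)$ are comparable up to a factor polynomial in $y_\G(z)$, so these coordinate-change factors are at most polynomial in $y_\G(z)$ and are absorbed by the exponential decay of the previous paragraph. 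Choosing the rate uniformly over the finitely many cusps yields $\f=O\bigl(e^{-c\,y_\G(z)}\bigr)$, and likewise for all its derivatives, as $y_\G(z)\to\infty$. I do not anticipate a genuine obstacle: the real inputs — that $\f$ is smooth with the stated Fourier expansion and that its coefficients obey the trivial bound — are already in hand, and the only point needing a little care is the bookkeeping of these cusp-to-cusp coordinate changes, which is harmless because the offending factors are merely polynomial in the invariant height.
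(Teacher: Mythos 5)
Your proof is correct and follows exactly the route the paper takes: the paper's own proof is a one-liner citing the Fourier expansions \eqref{fourier-expansion-open}--\eqref{fourier-expansion-closed}, the trivial bound \eqref{trivial-bound-on-Fourier}, the Bessel asymptotic $K_\nu(y)\sim(\pi/2y)^{1/2}e^{-y}$, and a derivative recurrence for $K_\nu$, and you have simply written out those same ingredients in full (using the recurrence $2K_\nu'=-(K_{\nu-1}+K_{\nu+1})$ in place of the paper's $(y^\nu K_\nu)'=-y^\nu K_{\nu-1}$, an immaterial difference).
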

\begin{proof}
  This follows easily from (\ref{fourier-expansion-open}),  (\ref{fourier-expansion-closed}), (\ref{trivial-bound-on-Fourier}), the asymptotic behavior of the $K$-Besselfunction
  \begin{equation*}
    K_\nu(y)\sim\left(\frac{\pi}{2y}\right)^{1/2}e^{-y}
  \end{equation*} as $y\to\infty$, and 
  \begin{equation*}
    \left(y^{\nu}K_{\nu}(y)\right)'=-y^{\nu}K_{\nu-1}(y).
  \end{equation*}
\end{proof}
\begin{remark} We note that in particular the above proposition shows that $S_\lambda(\Gamma,\chi)$ is contained in the dense subspace $D_{\G,\chi}$ defining $A(\Gamma,\chi)$. 
\end{remark}

\subsection{The Selberg trace formula}\label{selberg-trace-formula}
The Selberg trace formula relates the spectrum of the automorphic Laplacian with the geometry of the Riemann surface $\GmodH$ through the conjugacy classes of $\G$. To every hyperbolic conjugacy class $\{\g\}_\G$ corresponds a closed geodesic on $\GmodH$ of geodesic length $l(\g)$. We define the norm of $\{\g\}_\G$ as $N(\g)=e^{l(\g)}$, which may be expressed in terms of the trace of $\g$:' 
\begin{equation*}
  N(\gamma)=\exp(2\cosh^{-1}(\tr{\g}/2)).
\end{equation*} Every elliptic conjugacy class $\{\g\}_\G$ is of finite order $m_\g$. Let $\mathcal{P}$ (resp. $\mathcal{R}$) be the set of \emph{primitive} hyperbolic (resp. elliptic) conjugacy classes.  
 Let $h:\C\to\C$ be a test function such that 
\begin{enumerate}
\item $h$ is even
\item $h(r)$ is holomorphic for $\abs{\Im(r)}\leq 1/2+\e$
\item $h(r)=O((1+\abs{r})^{-(2+\e)})$ in the above strip
\end{enumerate}
for some $\e>0$. We shall call such a test function \emph{admissible}.  Let $g$ be the Fourier transform of $h$. For discrete eigenvalues we choose $r_i$ such that $\lambda_i=1/4+r_i^2$. 

Then Selberg found \cite[p. 667]{Selberg:1989aa} the celebrated trace formula 
\begin{gather}
  \label{sporformlen}
\begin{split}
2\sum_{i}h(r_i)&+\frac{1}{2\pi}  \int_\R h(r)\frac{-\phi'}{\phi}\left(\frac{1}{2}+ir,\chi\right)dr  \\
\allowdisplaybreaks =&\frac{\mu(\F_\G)}{2\pi}\int_\R r\tanh{(\pi r)}h(r)dr\\
\allowdisplaybreaks &+\sum_{\{\g\}_\G \in\mathcal{P}}\sum_{k=1}^\infty\frac{\chi(\g^k)2\log N(\g)}{N(\g)^{k/2}-N(\g)^{-k/2}}g(k\log N(\g))\\
\allowdisplaybreaks  &+\sum_{\{\g\}_\G \in\mathcal{R}}\sum_{1 \leq \nu<m_\g}\frac{2\chi(\g^\nu)}{m_\g\sin(\pi\nu/m_\g)}\int_\R h(r)\frac{e^{-{\pi \nu r}/{m_\g}}}{1+e^{-2\pi r}}dr\\
\allowdisplaybreaks &-2\left(\sum_{\chi(\g_{\a_i})=1}\log2+\sum_{\chi(\g_{\a_i})\neq 1}\log\abs{1-\chi(\g_{\a_i})}\right)g(0)\\
\allowdisplaybreaks  &+\frac{1}{2}\tr{I-\Phi(1/2,\chi)}h(0)
-\frac{k_1}{\pi}\int_{\R}h(r)\frac{\Gamma'}{\Gamma}(1+ir)dr
\end{split}
\end{gather}
where in the last line   $\Gamma$ denotes Eulers Gamma function (not to be confused with the discrete group $\G$), and $k_1$ equals the number of open cusps.

\subsection{The resolvent}
We denote by $R(s)=(A(\G,\chi)-s(1-s))^{-1}$ the resolvent of $A(\G,\chi)$ defined on the resolvent set $\rho(A(\G,\chi))$ which is the complement of the spectrum $\sigma(A(\G,\chi))$. We remind that the resolvent is a bounded $L^2$-operator which satisfies 
\begin{equation}\label{resolventbound}
   \norm{R(s)}\leq \frac{1}{dist(s(1-s),\sigma(A(\G,\chi)) )}
\end{equation}
If $s_0(1-s_0)$ is an \emph{isolated} eigenvalue the reduced resolvent for  $s_0(1-s_0)$  is defined as 
\begin{equation*}
  R_0(s)=R(s)(1-P)
\end{equation*}
where $P$ denotes the projection to the $s_0(1-s_0)$-eigenspace. This extends holomorphically across $s=s_0$ (See \cite[Ch III \S 6.5]{Kato:1976aa}).

We need some control of how the image of the (reduced) resolvent grows at the cusps. In order to keep control of such behavior Faddeev \cite{Faddeev:1967aa} introduced certain Banach spaces: 
Every $f:\F_\G\to\C$ may be written in terms of its $k+1$ components:
\begin{align*}
  f_0(z)=&f(z) &z\in \F_0, &\\
  f_j(z)=&f(\sigma_j z) &z\in \F_\infty^Y, &\qquad j=1\ldots k.
\end{align*}
For $\mu\in \R$ define $B_\mu$ as those $f$ for which all its components are continuous on $\F_0$,  $\F_\infty^Y$ respectively and such that  $f_j(z)/y^\mu$ is bounded on $\F_\infty^Y$, $j=1\ldots k$. The $\mu$-norm is defined as
\begin{equation*}
  \norm{f}_\mu=\max_{z\in\F_0}{\abs{f_0(z)}}+\sum_{j=1}^k\max_{z\in\F_\infty^Y}\abs{\frac{{f_j(z)}}{y^\mu}}.
\end{equation*}
This norm turns $B_\mu$ into a Banach space, and every function in $B_\mu$ grows at most like $y^\mu$ at all cusps.
\begin{proposition}\label{faddeev-result}Let $\lambda_0=s_0(1-s_0)<1/4$ be a small eigenvalue of $A(\Gamma,\chi)$, and let $R_0(s_{0})$ be the reduced resolvent at $\lambda_0$. Then  $R_0(s_{0})$ maps $B_0$ to  $B_{1-s_0+\e}$ for all $\e>0$.\end{proposition}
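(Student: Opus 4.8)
The strategy is to exploit the explicit $K$-Bessel structure of the resolvent kernel together with the fact that $\lambda_0 = s_0(1-s_0) < 1/4$ lies strictly below the bottom of the continuous spectrum, so that the resolvent kernel decays exponentially in the invariant height when both arguments are pushed into the cusps. First I would record the standard spectral expansion of the resolvent kernel $r(z,w;s)$ of $A(\Gamma,\chi)$, writing it as a sum over the discrete spectrum plus an Eisenstein integral over the continuous spectrum. Since $\lambda_0 < 1/4$, near $s = s_0$ the only non-holomorphic contribution is the rank-one term $\varphi(z)\overline{\varphi(w)}/(\lambda_0 - s(1-s))$ coming from the eigenspace; subtracting it gives precisely the reduced-resolvent kernel $r_0(z,w;s_0)$, which is holomorphic at $s_0$. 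The point is then to understand the growth of $(R_0(s_0)f)(z)$ in the $j$-th cusp as $y \to \infty$ for $f \in B_0$.

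The cleanest route is to control the zeroth Fourier coefficient of $R_0(s_0)f$ along each cusp $\mathfrak a_j$, since the nonzero Fourier coefficients are governed by $K$-Bessel functions of a nonzero multiple of $y$ and hence decay exponentially (as in Proposition \ref{gooddecay}), contributing nothing to the $\mu$-norm for any $\mu$. Let $g = R_0(s_0)f$; then $(A(\Gamma,\chi) - \lambda_0)g = (1-P)f = f - \langle f,\varphi\rangle\varphi$, and applying $\Delta$ in a neighborhood of the cusp to the zeroth Fourier coefficient $a_0(y) = \int_0^1 g(\sigma_j z)\,dx$ yields the ODE $-y^2 a_0''(y) - \lambda_0 a_0(y) = b_0(y)$, where $b_0(y)$ is the zeroth Fourier coefficient of $(1-P)f$. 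Since $f \in B_0$, $b_0(y)$ is bounded; and the zeroth coefficient of $\varphi$ is $O(y^{1-s_0})$ (it is a residual-type coefficient, see the discussion around (\ref{resi})), so $b_0(y) = O(y^{1-s_0})$ at worst, in fact $O(1)$ if $\varphi$ is cuspidal but we only need the weaker bound. Solving the Euler-type ODE by variation of parameters against the homogeneous solutions $y^{s_0}$ and $y^{1-s_0}$, and using that $g$ is square-integrable (so the $y^{s_0}$-component of $a_0$ must vanish at infinity since $s_0 > 1/2$), pins down $a_0(y) = O(y^{1-s_0+\e})$; the $\e$ absorbs the logarithmic ambiguity when the particular solution resonates with $y^{1-s_0}$. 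Continuity of the components on $\overline{\F_0}$ and $\overline{\F_\infty^Y}$ follows from elliptic regularity for $(A - \lambda_0)g = (1-P)f$ together with the fact that $f \in B_0$ is continuous, so $g \in B_{1-s_0+\e}$.

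The main obstacle, I expect, is making the square-integrability input rigorous enough to kill the $y^{s_0}$-term in $a_0(y)$ while simultaneously controlling the off-diagonal behavior of the reduced-resolvent kernel uniformly as one argument goes into a cusp: one must check that $R_0(s_0)$ genuinely maps into $L^2$ with the claimed pointwise growth, not merely that its kernel has such growth. This is where the explicit structure from Faddeev's and Selberg's work is needed — the resolvent kernel minus its cuspidal singular part has a well-understood asymptotic in the $B_\mu$-scale, and one invokes that the continuous-spectrum part of the resolvent, via the Eisenstein series $E_i(z,1/2+it,\chi)$ which grow only like $y^{1/2}$, contributes a kernel whose action on $B_0$ lands in $B_{1/2+\e} \subseteq B_{1-s_0+\e}$ (using $s_0 < 1$, hence $1 - s_0 > 1/2$ fails — so one must be slightly careful and note $1-s_0 \in (0,1/2)$, meaning the Eisenstein part is actually the dominant one and forces the exponent $1-s_0$ is \emph{not} attained; rather the dominant growth is exactly $y^{1-s_0}$ from the rank-one-subtracted discrete part paired against the slowly-growing kernel). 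I would therefore organize the final estimate by treating the discrete part, the residual part, and the continuous part of $R_0(s_0)$ separately, bounding each in the $B_\mu$-norm, and taking the worst exponent, which is $1-s_0$; the $+\e$ is the standard cushion for the borderline logarithmic case in the ODE solution. A reference to \cite{Faddeev:1967aa} and \cite[Ch. III]{Kato:1976aa} handles the holomorphy of $R_0(s)$ across $s_0$ that makes all of this legitimate.
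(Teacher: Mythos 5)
Your plan is in the Faddeev spirit — control the resolvent in the cusps via the ODE for the Fourier coefficients and use $L^2$-integrability to kill the $y^{s_0}$ branch — and the zero-mode ODE computation is essentially right, including the logarithmic resonance absorbed by $\e$. However, there are two genuine gaps that prevent the argument from closing, and they concern precisely the two places where you wave your hands.

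First, the claim that the nonzero Fourier coefficients of $g = R_0(s_0)f$ decay exponentially ``as in Proposition~\ref{gooddecay}'' is not correct. Proposition~\ref{gooddecay} applies to solutions of the \emph{homogeneous} equation $(\Delta - \lambda_0)\varphi = 0$, whose Fourier coefficients are pure $K$-Bessel. Here $g$ satisfies the \emph{inhomogeneous} equation $(\Delta - \lambda_0)g = (1-P)f$, so the $n$-th coefficient solves an inhomogeneous modified Bessel equation with right side $b_n(y)$ merely bounded (that is all $f\in B_0$ gives you). Solving by the Green's function built from $I_{s_0-1/2}(2\pi|n|y)$ and $K_{s_0-1/2}(2\pi|n|y)$ yields only polynomial decay in $y$, and the implied constant a priori depends on $n$; you then need uniformity in $n$ to sum up and land back in a $B_\mu$ space. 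None of this is automatic, and the citation to Proposition~\ref{gooddecay} is inapplicable.

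Second, the treatment of the continuous-spectrum contribution to the reduced-resolvent kernel is internally contradictory. You first assert it lands in $B_{1/2+\e}\subseteq B_{1-s_0+\e}$; but $1/2 > 1-s_0$, so the inclusion runs the wrong way, and you notice this. You then conclude the Eisenstein part is ``actually the dominant one'' (which would break the proposition), and immediately afterward that the dominant growth is ``exactly $y^{1-s_0}$'' anyway. The resolution you are missing is that the naive $y^{1/2}$ bound on the $t$-integral is not sharp: the integrand is holomorphic in $t$ in a strip, and shifting the contour past $t = \pm i(s_0-1/2)$ produces the residue term $\propto y^{1-s_0}$ together with a shifted integral that is genuinely $O(y^{1-s_0-\delta})$. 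Without carrying out this shift (or an equivalent ODE/Green's-function argument for the full zero coefficient), you cannot justify the exponent $1-s_0$, and your own parenthetical shows you have not.

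For comparison, the paper's proof takes a different and much shorter route: it does not decompose the resolvent into discrete and continuous parts at all, but simply invokes Faddeev's theory as presented in Lang \cite[XIV~\S 11]{Lang:1985aa}, choosing the free parameter $\mu = 1/2+\delta$ with $\delta = s_0 - 1/2 - \e$, which directly yields that $R_0(s)\colon B_0 \to B_{1/2-\delta} = B_{1-s_0+\e}$ for $\Re(s) > 1/2+\delta$. That proof sidesteps the issues above because the uniform-in-$n$ Bessel estimates and the cusp-asymptotics of the resolvent kernel are already packaged in Faddeev's construction. If you want to keep your from-scratch approach, you need to replace the appeal to Proposition~\ref{gooddecay} with genuine uniform estimates on the inhomogeneous Bessel Green's functions, and to carry out the contour shift in the Eisenstein $t$-integral.
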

\begin{proof}
This follows from Faddeev's theory \cite{Faddeev:1967aa}. We refer to Lang \cite[XIV \S 11]{Lang:1985aa}. More precisely - in p. 334 l. 7 choose $\mu=1/2+\delta$. This choice of $\mu$ will not allow us to be deal with embedded eigenvalues as in \cite{Lang:1985aa,Faddeev:1967aa} but we get better bounds for small eigenvalues. Following the argument in \cite{Lang:1985aa} to p. 338 l. 4 we find that when $1/2+\delta<\Re(s)<2$ and $s(1-s)$ not an eigenvalue (except possibly $s(1-s)=\lambda_0$ which is the point we care about).
\begin{equation*}
R_0(s)=  R(s)-\frac{P}{\lambda_0-s(1-s)}:B_0\to B_{1/2-\delta}
\end{equation*} where $P$ is the projection to the $\lambda_0$-eigenspace. By choosing $\delta$  such that  $1/2+\delta+\e=s_0$ where we assume that $\e$ is small enough that $\delta>0$ which is possible since $1/2<s_0<1$.  Therefore $\Re(s_0)> 1/2+\delta$ and the desired result follows. Lang \cite{Lang:1985aa} only considers $\chi=1$ but Venkov \cite{Venkov:1982aa} explains the relatively small modifications needed to deal with the case $\chi\neq 1$. 
  
\end{proof}

\section{Small eigenvalues and character perturbation}\label{perturbation}
In this section we describe how holomorphic forms of weight two for $\G$ induces a perturbed Laplacian. 
We will then explain how Kato's asymptotic perturbation  theory \cite[Chapter VIII]{Kato:1976aa} may be applied to study small eigenvalues. We insist on not assuming that the weight 2 form is cuspidal, which is why we are using  Kato's asymptotic perturbation theory instead of the more standard  regular perturbation theory which does not seem to apply. We are still working in the general situation of section \ref{basics} i.e. a general discrete cofinite subgroup of $\psl$.

\subsection{The perturbed automorphic Laplacian}\label{erc-application-is-over}
Let $f\in M_2(\G)$ be a holomorphic form of weight 2, not necessarily a cusp form. Then
\begin{equation*}
  \alpha(z)=\Re(f(z)dz)=\frac{f(z)}{2}dz+\frac{\overline{f(z)}}{2}d\overline{z}
\end{equation*}
is a harmonic $\G$-invariant 1-form, and for $\e\in \R$ the map

 \begin{equation}\begin{array}{llccc}\chi(\e,\alpha)&:&\G&\to& S^1\\
 &&\g&\mapsto&\displaystyle e^{2\pi i\e\int_{z_0}^{\g z_0}\alpha}
 \end{array}.
 \end{equation} 
is a multiplicative character of $\G$. Here $z_0\in \H$ may be chosen arbitrarily as the integral does not depend on the specific choice.

For a given character $\chi$ of $\G$ and $f\in M_2(\G)$ we form the character
\begin{equation*}
  \chi_\e:=\chi\cdot\chi(\e,\alpha),
\end{equation*}
 and we want to investigate how isolated eigenvalues $\lambda_i(\e)$ of the Laplacians $L(\G,\chi_\e)$ behave in the limit as $\e$ approaches zero. To make the problem more susceptible to the standard techniques of perturbation theory we consider a unitarily equivalent operator as in \cite{Phillips:1991aa, Phillips:1994aa}. Consider the unitary operator
\begin{equation}\begin{array}{llccc}U(\e)&:&\rum{\G,\chi_0}&\to& \rum{\G,\chi_\e}\\
 &&f(z) &\mapsto&\displaystyle \exp\left(2\pi i\e\int_{z_0}^{ z}\alpha\right)f(z).
 \end{array}
 \end{equation} \label{perturbed-laplacian}
The perturbed Laplacian 
\begin{equation}
\begin{CD} 
\rum{\G,\chi_\e} @>A(\G,\chi_\e)>>\rum{\G,\chi_\e}\\ 
@VU(\e)^{-1}VV @VVU(\e)^{-1}V\\ 
\rum{\G,\chi_0} @>L(\e)>> \rum{\G,\chi_0} 
\end{CD} 
\end{equation}
now becomes
\begin{equation} \label{perturbed}
  L(\e ):=U(\e)^{-1}A(\G,\chi_\e)U(\e)=A(\G,0)+\e L_\e+\e^2  L_{\e\e} 
\end{equation}
 where $L_\e$ resp.  $L_{\e\e}$ are the selfadjoint operators obtained as the closure 
 \begin{equation}\label{leavingsoon}
   h\mapsto 4\pi i \inprod{dh}{\alpha}, \qquad h\mapsto -4\pi^2\inprod{\alpha}{\alpha}h 
 \end{equation}
defined on $D_{\G,\chi_0}$ (See (\ref{densesubspace})).
Here 
\begin{equation*}
  \inprod{f_1dz+f_2d\overline z}{g_1dz+g_2d\overline z}=2y^2(f_1\overline{g_1}+f_2\overline{g_2})
\end{equation*}
We will call this kind of perturbations \emph{character perturbations of the automorphic Laplacian $A(\Gamma,\chi)$}. 

In the calculation leading to (\ref{perturbed}) we have used that $\alpha$ is harmonic. For more general 1-forms there is an extra term (See \cite[(2.5)]{Phillips:1994aa}).
We emphasize that the identity in (\ref{perturbed-laplacian}) should be understood in the following way: The operator $L(\e)$ is defined as the closure of the operator defined by $A(\G,0)+\e L_\e+\e^2  L_{\e\e}$ on the dense subspace $U(\e)D_{\G,\chi_\e}=D_{\G,\chi_0}$


\begin{remark}\label{discontinuous}
  The upshot of conjugating to the fixed space $D_{\G,\chi_0}\rum{\G,\chi_0}$ is that standard methods from perturbation theory may be applied. If $\chi_\e$ for small $\e$ has the same cusps open and closed as $\chi_0$ -- as happens  if $f\in M_2(\G)$ is cuspidal i.e.  $f\in S_2(\G)$, or if $\G$ is cocompact -- then the perturbation is analytic (See \cite[Prop 2.8]{Epstein:1987aa}), and there is a nice perturbation theory for isolated eigenvalues (See \cite[Chapter VII]{Kato:1976aa}). But we want to be able to handle non-cuspidal $f\in M_2(\G)$ and in this situation \emph{the number of open cusps may go down} when $\e\neq 0$. A first indication that such a perturbation requires extra care comes from observing that the two last lines in (\ref{sporformlen}) are \emph{not} continuous in the limit $\e\to 0$. In spite of this perturbation theory has strong results to offer:  See sections \ref{pert1} and  \ref{pert2}.

\end{remark}

 \subsection{Kato's asymptotic perturbation theory} \label{pert1}

Perturbation of the smallest eigenvalue has been studied by Phillips and Sarnak \cite{Phillips:1987aa}, Epstein \cite{Epstein:1987aa}, Petridis and Risager \cite{Petridis:2007aa} and others to count the number of closed geodesics on Riemann surfaces. Perturbations of embedded eigenvalues has been investigated by Phillips and Sarnak \cite{Phillips:1991aa, Phillips:1994aa}, Wolpert \cite{Wolpert:1994aa}, Balslev and Venkov \cite{Balslev:2001aa, Balslev:2005aa} and others to explore whether a Weyl law holds for the generic finite volume Riemann surface (I.e. is the Roelcke-Selberg conjecture true?).

 We now cite the relevant parts of \cite[Chapter VIII]{Kato:1976aa} in a slightly generalized form. We refer to \cite{Kato:1976aa} for further explanations and motivation.

Let $\{T_\e\}_{\e>0 }$ be a set of closed operators in a Banach space $X$. Let $R_\e(\zeta)=(T_\e-\zeta)^{-1}$ be the resolvent defined for $\zeta\in \rho(T_\e)$ the resolvent set.  The \emph{region of boundedness for $\{R_\e(\zeta)\}$} is the set  $\Delta_b$  consisting of $\zeta\in \C$ such that for $\e_0>0$ sufficiently small $\{\norm{R_\e(\zeta)}\}_{0<\e\leq\e_0 }$ is bounded. Define the  \emph{region of strong convergence for  $\{R_\e(\zeta)\}$} as the set  $\Delta_s$ of $\zeta\in\C$ such that the strong limit $\slim_{\e\to 0} R_\e(\zeta)=R'(\zeta)$ exists. Assume that $\Delta_s$ is nonempty. Then either $R'(\zeta)$ is invertible for no $\zeta\in\Delta_s$ or $R'(\zeta)$ equals the resolvent $R(\zeta)$ of a unique closed operator $T$ on X. In the latter case $\Delta_s=\rho(T)\cap \Delta_b$, and we say \emph{that $T_{\e} $ converges strongly to $T$  in the generalized sense.} Hence convergence in the generalized sense means that $R_\e(\zeta)$ converges strongly to $R(\zeta)$ when  $\zeta\in\Delta_s$. We remind that a core for a closed operator $T$ is the domain of any closable operator $S$ such that the closure of $S$ equals $T$. We have the following criterion for generalized strong convergence:

\begin{theorem}\cite[p. 429]{Kato:1976aa}\label{coreconvergence}
Let $T_\e$, $T$ be closed operators on $X$. Assume that there is a core $D$ of $T$ such that each $u\in D$ belongs to $D_{T_\e}$ for $\e$ sufficiently small and $T_\e u\to Tu$ as $\e\to 0$. If $\rho(T)\cap \Delta_b\neq \emptyset$ the $T_\e$ converges strongly to $T$ in the generalized sense and $\Delta_s=\rho(T)\cap \Delta_b$.
\end{theorem}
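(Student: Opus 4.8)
### Proof proposal for Theorem \ref{coreconvergence}

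\textbf{The plan.} The statement is Kato's criterion for generalized strong convergence of closed operators, and the natural strategy is to verify that, under the stated hypotheses, the resolvents converge strongly on a nonempty subset of $\C$. Concretely, I would fix $\zeta_0\in\rho(T)\cap\Delta_b$ and aim to show (i) $\zeta_0\in\Delta_s$, i.e.\ $\slim_{\e\to 0}R_\e(\zeta_0)$ exists, and (ii) this strong limit equals $R(\zeta_0)=(T-\zeta_0)^{-1}$. Once $\Delta_s$ is seen to be nonempty and to contain a point where the limit $R'(\zeta_0)$ is invertible (it is, being $R(\zeta_0)$), the abstract dichotomy quoted just above the theorem forces $R'(\zeta)=R(\zeta)$ on all of $\Delta_s$ and gives $\Delta_s=\rho(T)\cap\Delta_b$ for free. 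So the whole content is the single limit computation at $\zeta_0$.

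\textbf{Key steps.} First I would pin down the uniform bound: since $\zeta_0\in\Delta_b$, there are $\e_0>0$ and $C<\infty$ with $\norm{R_\e(\zeta_0)}\le C$ for all $0<\e\le\e_0$. Second, I would exploit the core hypothesis: for $u\in D$ and $\e$ small enough, $u\in D_{T_\e}$ and $T_\e u\to Tu$, hence
\begin{equation*}
  R_\e(\zeta_0)(T-\zeta_0)u = R_\e(\zeta_0)(T_\e-\zeta_0)u + R_\e(\zeta_0)\bigl((T-T_\e)u\bigr) = u + R_\e(\zeta_0)\bigl((T-T_\e)u\bigr),
\end{equation*}
and the second term is bounded by $C\norm{(T-T_\e)u}\to 0$. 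Thus $R_\e(\zeta_0)v\to R(\zeta_0)v$ for every $v$ in the linear subspace $(T-\zeta_0)D$. Third, I would argue this subspace is dense in $X$: $D$ is a core of $T$, so $(T-\zeta_0)D$ is dense in the graph-norm image $(T-\zeta_0)D_T = X$ (the latter equality because $\zeta_0\in\rho(T)$), and $(T-\zeta_0)$ is bounded from the graph norm, so density is preserved. Fourth, a standard $\tfrac{\e}{3}$-argument using the uniform bound $C$ upgrades strong convergence on the dense set $(T-\zeta_0)D$ to strong convergence on all of $X$: given $x\in X$ and $\eta>0$, pick $v$ in the dense set with $\norm{x-v}<\eta/(3C)$ and estimate $\norm{R_\e(\zeta_0)x - R(\zeta_0)x}$ by splitting through $v$, using $\norm{R_\e(\zeta_0)}\le C$ and $\norm{R(\zeta_0)}\le C$. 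Hence $R'(\zeta_0):=\slim_\e R_\e(\zeta_0)$ exists and equals $R(\zeta_0)$, so $\zeta_0\in\Delta_s$ and $R'$ is invertible there. Finally, invoking the quoted dichotomy completes the proof and identifies $\Delta_s$.

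\textbf{Main obstacle.} None of the individual steps is deep; the only point requiring genuine care is the density of $(T-\zeta_0)D$ in $X$ and the interchange of limits in the $\tfrac{\e}{3}$-argument, where one must be careful that the uniform bound $C$ is available for \emph{both} $R_\e(\zeta_0)$ (for small $\e$) and the limit operator $R(\zeta_0)$ — the latter being automatic from $\norm{R(\zeta)}\le C$ by taking $\e\to 0$ in the uniform bound, or directly from $\zeta_0\in\rho(T)$. A secondary subtlety, already handled in \cite{Kato:1976aa}, is that we have slightly generalized the statement (the family is indexed by a continuous parameter $\e>0$ rather than a sequence), but the argument above is insensitive to this since it only uses the existence of the uniform bound on $(0,\e_0]$ and pointwise convergence $T_\e u\to Tu$ as $\e\to 0$.
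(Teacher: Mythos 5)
Your proof is correct and follows exactly the standard argument, which is the one in Kato's book (Theorem 1.5, Ch.\ VIII \S 1.2) — the paper itself does not reprove this; it simply cites \cite[p.\ 429]{Kato:1976aa}. The sequence of steps (uniform resolvent bound from $\zeta_0\in\Delta_b$, the second-resolvent-type identity $R_\e(\zeta_0)(T-\zeta_0)u = u + R_\e(\zeta_0)(T-T_\e)u$ on the core, density of $(T-\zeta_0)D$ in $X$ via $\zeta_0\in\rho(T)$, the $\eta/3$-upgrade to strong convergence on all of $X$, and the final appeal to the dichotomy to obtain $\Delta_s=\rho(T)\cap\Delta_b$) is precisely Kato's route, and each step is justified as you state it. One point you gloss slightly but which does hold: $\zeta_0\in\Delta_b$ already presupposes $\zeta_0\in\rho(T_\e)$ for all small $\e$, which is what licenses writing $R_\e(\zeta_0)(T_\e-\zeta_0)u=u$.
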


Assume that $T_\e$ converges strongly to $T$ in the generalized sense. Unfortunately, in  general not much can be said about the spectrum of $T_\e$ ($\e$ small) close to $\lambda$ even under the assumption that $\lambda$ is an isolated eigenvalue of $T$ (See \cite[Ch. VIII, \S 1. 4]{Kato:1976aa}). We therefore introduce the notion of a \emph{stable eigenvalue} for which we will be able to say more. 

Still assuming that $T_\e$ converges strongly to $T$ in the generalized sense we say that an isolated finite multiplicity eigenvalue $\lambda$ of $T$  is \emph{stable} (in the sense of Kato)  if
\begin{enumerate}
\item $\Delta_s$ contains a deleted neighborhood of $\lambda$.\\
 In particular,  for some $\delta>0$,  $\{0<\abs{\zeta-\lambda}= \delta\}\subset \Delta_s$. From \cite[Theorem 1.2]{Kato:1976aa} the convergence $R_\e(\zeta)\to R(\zeta)$ is uniform on $\G=\{\zeta\in\C\vert\, \abs{\zeta-\lambda}=\delta\}$. We can therefore define the projection 
  \begin{equation*}
    P_\e=-\frac{1}{2\pi i}\int_{\G}R_\e(\zeta)d\zeta
  \end{equation*}
which projects to the eigenspaces with $\abs{\zeta-\lambda}<\delta$.  The projections $P_\e$ converges strongly to 
\begin{equation*}
  P=-\frac{1}{2\pi i}\int_{\G}R(\zeta)d\zeta
\end{equation*}
as $\e\to 0$.
\item $\dim P_\e \leq \dim P$ for sufficiently small $\e$
\end{enumerate}

We will now consider a particular type of family of operators. Let $\mathscr{C}(X)$ be the set of closed operators on $X$. Assume that we have operators $T$, $T^{(1)}$,  $T^{(2)}$, and a parameter $\e$ are given such that
\begin{enumerate}
\item \label{farfar} $T\in \mathscr{C}(X)$
\item $D=D_T\cap D_{T^{(1)}}\cap D_{T^{(2)}}$ is a core of $T$
\item $T(\e)\in \mathscr{C}(X)$ and for $0<\e\leq 1$ it is an extension of the operator $T+\e T^{(1)}+\e^2 T^{(2)}$ defined with domain $D$.
\item \label{notempty} $\rho(T)\cap\Delta_b\neq \emptyset$ where $\Delta_b$ is the region of boundedness for the family $(T(\e)-\zeta)^{-1}$.
\end{enumerate}
\begin{remark}\label{faddeevishere}
We note that by Theorem \ref{coreconvergence} we can conclude that $T(\e)$ converges strongly to $T$ in the generalized sense, and $\Delta_s=\rho(T)\cap\Delta_b$.
\end{remark}

Kato proves the following:  
\begin{theorem} \label{kato-bigtheorem}
  Let $X$ be a Hilbert space, and assume that we have a family as above with $T$, $T(\e)$ selfadjoint, and $T^{(1)}$ symmetric. 
Let $\lambda$ be a stable 
eigenvalue of $T$ of dimension $m<\infty$ with projection $P$ and assume $PX\subset D$. Then the $m$ eigenvalues for $T(\e)$ close to $\lambda$ may be numbered in the form $\mu_{jk}(\e)$, $j=1,\ldots,s$, $k=1,\ldots,m_j$ such that they have an asymptotic expansion
\begin{equation}
  \label{eigenvalueexpansion}
\mu_{jk}(\e)=\lambda+\e\mu_j^{(1)}+\e^2\mu_{jk}^{(2)}+o(\e^2) \textrm{ as } \e\to 0_+
\end{equation}
Here $\mu_j^{(1)}$ are the eigenvalues of $PT^{(1)}P$, with corresponding eigenprojections $P_j^{1}$.
The total projection $P_j(\e)$ for the $m_j$ eigenvalues $\mu_{jk}(\e)$ has an asymptotic expansion
\begin{equation}
  \label{projectionexpansion}
P_j(\e)=P_j^{1}+\e P_j^{11}+o(\e)_s  
\end{equation}
where $o(\e)_s $ denotes an operator such that $\e^{-1}o(\e)$ converges strongly to $0$ as $\e\to 0_+$, and 
\begin{equation}
  \label{second-term-in-projection}
  P_j^{11}=-S_\lambda T^{(1)}P_j^{1}+\sum_{i}P_j^{1}A_{ij}.
\end{equation}
Here $A_{ij}$ are bounded operators, and $S_\lambda$ is the reduced resolvent of T at $\lambda$.
\end{theorem}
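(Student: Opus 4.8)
The plan is to follow Kato's development of asymptotic perturbation theory in \cite[Chapter VIII, \S\S 1--2]{Kato:1976aa} almost verbatim; the only new point is some bookkeeping, since we only assume that $T(\e)$ is an extension of $T+\e T^{(1)}+\e^2 T^{(2)}$ on the core $D$ and we carry the quadratic term $\e^2 T^{(2)}$ along. First I would record, via Theorem \ref{coreconvergence} and Remark \ref{faddeevishere}, that $T(\e)$ converges to $T$ in the generalized sense and $\Delta_s=\rho(T)\cap\Delta_b$. Since $\lambda$ is stable, for some small $\delta>0$ the circle $\G=\{\abs{\zeta-\lambda}=\delta\}$ lies in $\Delta_s$, the convergence $R_\e(\zeta)\to R(\zeta)$ is uniform on $\G$ with $\sup_{\zeta\in\G}\norm{R_\e(\zeta)}\leq C$ for $\e$ small, the Riesz projections $P_\e=-\frac{1}{2\pi i}\int_\G R_\e(\zeta)\,d\zeta$ are well defined with $P_\e\to P$ strongly, and $\dim P_\e\leq m$. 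Testing $P_\e$ against an orthonormal basis $e_1,\dots,e_m$ of $PX$ forces $\dim P_\e\geq m$, hence $\dim P_\e=m$; as $T(\e)$ is selfadjoint, $P_\e$ is its orthogonal spectral projection for exactly $m$ eigenvalues of $T(\e)$ (counted with multiplicity) inside $\G$. Finally, expressing an arbitrary element of $P_\e X$ through the almost orthonormal system $P_\e e_1,\dots,P_\e e_m$ upgrades the strong convergence to norm convergence $\norm{P_\e-P}\to0$.

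The heart of the matter is a resolvent identity made available by the hypothesis $PX\subset D$. For $u\in D$ we have $(T(\e)-\zeta)u=(T-\zeta)u+\e T^{(1)}u+\e^2 T^{(2)}u$; applying $R_\e(\zeta)$, taking $u=Pv$ and using that $T$ is semisimple at $\lambda$ (so $(T-\zeta)Pv=(\lambda-\zeta)Pv$) gives, for all $v\in X$ and $\zeta\in\G$,
\begin{equation*}
  R_\e(\zeta)P=-\frac{P}{\zeta-\lambda}+\frac{1}{\zeta-\lambda}\bigl(\e R_\e(\zeta)T^{(1)}P+\e^2 R_\e(\zeta)T^{(2)}P\bigr),
\end{equation*}
together with the adjoint identity for $PR_\e(\zeta)$; here $T^{(1)}P$ and $T^{(2)}P$ are finite rank, hence bounded, since $PX\subset D$. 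Integrating over $\G$ and using $\norm{R_\e(\zeta)}\leq C$ yields $P_\e P=P+O(\e)$ in operator norm, and, for the bounded operator $A_\e:=(T(\e)-\lambda)P_\e=-\frac{1}{2\pi i}\int_\G(\zeta-\lambda)R_\e(\zeta)\,d\zeta$, the exact formula
\begin{equation*}
  A_\e P=\e\,P_\e T^{(1)}P+\e^2 P_\e T^{(2)}P
\end{equation*}
together with its companion for $PA_\e$. Since $A_\e=P_\e A_\e P_\e$, these give $\norm{A_\e}=O(\e)$ and, because $\norm{P_\e-P}\to0$, $A_\e=\e\,PT^{(1)}P+o(\e)$ as an operator on the fixed finite-dimensional space $PX$.

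Next comes the finite-dimensional reduction. Since $\norm{P_\e-P}<1$ for small $\e$, the unitary transformation function $U_\e$ with $U_\e PU_\e^{-1}=P_\e$ is defined and satisfies $U_\e=I+O(\norm{P_\e-P})$; then $\hat A_\e:=U_\e^{-1}A_\e U_\e$ is a selfadjoint operator on $PX$ with the same eigenvalues $\mu_{jk}(\e)-\lambda$ as $A_\e$, and $\hat A_\e=\e\,PT^{(1)}P+o(\e)$. Diagonalising the symmetric operator $PT^{(1)}P$ on $PX$, with real eigenvalues $\mu_j^{(1)}$ and eigenprojections $P_j^{1}$, ordinary perturbation theory for the finite-dimensional family $\hat A_\e$ supplies the labelling $\mu_{jk}(\e)=\lambda+\e\mu_j^{(1)}+o(\e)$ and the splitting $P_j(\e)=P_j^{1}+o(1)_s$ of $P_\e$ along the eigenspaces of $PT^{(1)}P$. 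Pushing the resolvent identity one further order --- where the reduced resolvent $S_\lambda$ enters through the part $R_\e(\zeta)(I-P)$, which in the unperturbed case is holomorphic at $\zeta=\lambda$ with value $S_\lambda$ --- refines these to the asserted expansions \eqref{eigenvalueexpansion} and \eqref{projectionexpansion}, with $P_j^{11}=-S_\lambda T^{(1)}P_j^{1}+\sum_i P_j^{1}A_{ij}$; the detailed error estimates are precisely those of \cite[Chapter VIII, \S2]{Kato:1976aa}.

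The step I expect to be the main obstacle is that, as flagged in Remark \ref{discontinuous}, the perturbation is only \emph{asymptotic} and not analytic: the number of open cusps may drop for $\e\neq0$, $R_\e(\zeta)$ has no norm-convergent Neumann expansion in $\e$, and the operators $T^{(1)}R(\zeta)$ and $R(\zeta)T^{(1)}$ need not be bounded. The remedy, which is exactly Kato's, is to keep a spectral projection $P$ or $P_\e$ adjacent at every step, so that $T^{(1)}$ and $T^{(2)}$ are applied only to vectors of the core $D$ (using $PX\subset D$), and to replace statements of norm convergence by statements of strong convergence, controlled uniformly on $\G$ by the boundedness hypothesis $\rho(T)\cap\Delta_b\neq\emptyset$. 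This is precisely why the remainder in the eigenprojection expansion is only $o(\e)_s$ and why the correction operators $A_{ij}$ cannot be identified beyond being bounded. I note finally that verifying $\rho(T)\cap\Delta_b\neq\emptyset$ in the automorphic application --- i.e.\ that the character perturbation of the automorphic Laplacian actually has a nonempty region of boundedness --- is itself delicate and is where Faddeev's theory (Proposition \ref{faddeev-result}) will be used; for the present abstract statement it is one of the standing hypotheses.
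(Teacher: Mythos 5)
The paper does not prove Theorem~\ref{kato-bigtheorem} at all: the remark immediately following it simply cites Kato's Theorem~2.9, Remark~2.10, and a footnote, together with Wolpert's modifications for $T^{(2)}\neq 0$, and offers no argument of its own. Your reconstruction follows Kato's development faithfully --- the dimension count forcing $\dim P_\e=m$, the upgrade from strong to norm convergence of the finite-rank projections, the resolvent identity $R_\e(\zeta)P=-\tfrac{P}{\zeta-\lambda}+\tfrac{\e}{\zeta-\lambda}R_\e(\zeta)T^{(1)}P+\tfrac{\e^2}{\zeta-\lambda}R_\e(\zeta)T^{(2)}P$ made licit by $PX\subset D$, and the reduction to a selfadjoint family on the fixed space $PX$ via the pair-of-projections unitary --- and is correct as an outline, with the detailed second-order error estimates appropriately delegated back to Kato, which is the same level of detail the paper itself provides.
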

\begin{remark}
This is a simplified version of Kato's Theorem 2.9, Remark 2.10, and footnote 2 on p. 449. Kato assumes $T^{(2)}=0$. Wolpert \cite[263-266]{Wolpert:1994aa} describes which minor changes are needed to allow $T^{(2)}\neq 0$. Note that (0) p.264 in \cite{Wolpert:1994aa} is trivially satisfied in the version of the problem that we stated.   The explicit form of the bounded operators $A_{ij}$ is given in Kato's Theorem 2.9 but we do not need it.
\end{remark}

\subsection{Stability of small eigenvalues under character perturbation} \label{pert2} In this section we will show that small eigenvalues move continuously under character perturbations with $f\in M_2(\Gamma)$, i.e. in the setup described in Section \ref{erc-application-is-over}. This will be a crucial ingredient when we will show later that small eigenvalues are \emph{stable} in the sense of Kato. The proof of Lemma \ref{small-eigenvalues} below is strongly inspired by \cite{Huntley:1995aa, Huntley:1997aa}  where the authors prove similar result in the case of pinching geodesics, using the heat kernel. Similar results where obtained by Hejhal (pinching geodesics) \cite[Thm 7.2 ]{Hejhal:1990aa}  and Venkov (regular deformations) \cite[Thm 7.1.1]{Venkov:1982aa} using the resolvent kernels. 

We recall a few standard facts about the Laplace transform.
For a (sufficiently nice) function $f$ on $\R_+$  we define its Laplace transform as  
\begin{equation}
  \label{laplace-transform}
  \laplace(f)(z)=\int_0^\infty e^{-zt}f(t)dt.
\end{equation}
If for instance $f$ is piecewise continuous and  real-valued satisfying $\abs{f(t)}\leq M e^{ct}$ then  $\laplace(f)(z)$ exist for all complex $z$ in a half-plane $\Re(z)>a_0$. The inverse transform is given by
\begin{equation*}
f(u) =  \frac{1}{2\pi i}\int_{a-i\infty}^{a+i\infty}e^{zu}\laplace(f)(z)dz.
\end{equation*}
which hold for any $a>a_0$. 
Define for $\rho>0$ \begin{equation*}f_\rho(t)=\int_0^t\frac{(t-u)^{\rho-1}}{\Gamma(\rho)}f(u)du.\end{equation*}
We recall  (\cite[Thm 8.1]{Widder:1941aa}) that  when $a>0$, $a>a_0$
\begin{equation}\label{god-invers}
 \frac{1}{2\pi i}\int_{a-i\infty}^{a+i\infty}e^{zu}\frac{\laplace(f)(z)}{z^\rho}dz=\begin{cases}f_\rho(u) & u\geq 0\\ 0 &u< 0\end{cases} 
\end{equation}

We now define the spectral counting function 
\begin{equation*}
  N(T,\e)=\#\{\l_i(\chi_\e)\leq T\}, 
\end{equation*} i.e. $N(T,\e)$ counts the number of  eigenvalues for the Laplacian $L(\G,\chi_\e)$ which are less than $T$. 
\begin{lemma}\label{small-eigenvalues} Fix $T<1/4$ which is not an eigenvalue for $L(0)$. Then the spectral counting function $N(T,\e)$ is continuous at $\e=0$.
\end{lemma}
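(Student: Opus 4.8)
The plan is to prove continuity of $N(T,\e)$ at $\e=0$ by establishing both $\liminf_{\e\to 0}N(T,\e)\geq N(T,0)$ (lower semicontinuity) and $\limsup_{\e\to 0}N(T,\e)\leq N(T,0)$ (upper semicontinuity) separately, using a heat-kernel / Laplace transform argument applied to the trace formula (\ref{sporformlen}). The idea is to feed into the trace formula a test function $h(r)=h_t(r)$ that approximates the heat kernel $e^{-t(1/4+r^2)}$, or rather to work with a regularized version so that the spectral side is $2\sum_i e^{-t\lambda_i(\chi_\e)}$ plus an Eisenstein contribution, and then compare the $\e\neq 0$ and $\e=0$ versions of the identity. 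The key subtlety — already flagged in Remark \ref{discontinuous} — is that the last two lines of (\ref{sporformlen}) are \emph{discontinuous} as $\e\to 0$ because open cusps may close up; this discontinuity, however, comes from a finite rank / bounded source (a change in the number of open cusps, an extra $\log\abs{1-\chi_\e(\g_{\a_i})}\to-\infty$ term, and the scattering term), and the strategy is to isolate exactly this contribution and show the rest converges.

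Concretely, I would proceed as follows. First, fix $T<1/4$ not an eigenvalue of $L(0)$, and pick $a,b$ with $0<a<T<b<1/4$ such that $[a,b]$ contains no eigenvalue of $L(0)$ other than possibly none — i.e. separate $T$ from $\sigma(L(0))$. Second, apply (\ref{sporformlen}) with $\chi=\chi_\e$ and a well-chosen family of admissible test functions $h_t$. The geometric side splits into: the identity term $\frac{\mu(\F_\G)}{2\pi}\int_\R r\tanh(\pi r)h_t(r)dr$ which does not depend on $\e$; the hyperbolic sum $\sum_{\{\g\}\in\mathcal{P}}\sum_k \frac{\chi_\e(\g^k)2\log N(\g)}{N(\g)^{k/2}-N(\g)^{-k/2}}g_t(k\log N(\g))$, which converges to the $\e=0$ value since $\chi_\e(\g)\to\chi_0(\g)$ pointwise for every fixed $\g$ and the sum converges absolutely and uniformly in $\e$ (standard geometric bound on $N(\g)$, independent of the unitary character); similarly the elliptic sum converges by pointwise convergence of $\chi_\e(\g^\nu)$ over the finite set of relevant classes; and the $\Gamma'/\Gamma$ term depends only on $k_1=k_1(\e)$. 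The genuinely $\e$-dependent problematic pieces are the parabolic term $-2(\sum_{\text{open}}\log 2+\sum_{\text{closed}}\log\abs{1-\chi_\e(\g_{\a_i})})g_t(0)$, the term $\frac12\tr(I-\Phi(1/2,\chi_\e))h_t(0)$, the scattering integral $\frac{1}{2\pi}\int_\R h_t(r)\frac{-\phi'}{\phi}(1/2+ir,\chi_\e)dr$, and the $\Gamma'/\Gamma$ term through $k_1(\e)$.

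The heart of the argument is therefore to control the scattering contribution $\frac{1}{2\pi}\int_\R h_t(r)\frac{-\phi'}{\phi}(1/2+ir,\chi_\e)dr$ together with the $\tr(I-\Phi(1/2,\chi_\e))$ term. For this I would use the contour-shift / Laplace-transform machinery set up just before the lemma: writing $\frac{-\phi'}{\phi}$ as a logarithmic derivative and using the Maaß–Selberg relations, the integral can be related to the residual (pole) contribution of $\phi(s,\chi_\e)$ in $(1/2,1)$, which is nonnegative by the formula (\ref{resi}) $\norm{\res_{s_0}E_i}^2=\res_{s_0}\Phi_{i,i}$. The upshot I aim to extract is a \emph{one-sided} inequality: for $\e$ small the number of discrete eigenvalues below $T$ plus a controlled nonnegative scattering quantity equals a quantity converging to the $\e=0$ side (which involves $N(T,0)$ plus the $\e=0$ scattering/residual term, since at $\e=0$ there are no residual eigenvalues in $]0,1/4[$). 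Separating the heat kernel $e^{-t\lambda}$ for $\lambda<T$ from $\lambda>T$ (legitimate because $T\notin\sigma(L(0))$, so there is a genuine gap, and because eigenvalues of $L(\e)$ cannot accumulate near $T$ from the geometric-side bound — here one uses that the geometric side is bounded uniformly in $\e$ as $t\to 0$ in the appropriate regime) one gets lower and upper bounds on $N(T,\e)$ both pinching to $N(T,0)$ in the limit $\e\to 0$.

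The main obstacle, as I see it, is precisely the interchange of the two limits $\e\to 0$ and $t\to 0$ (equivalently the choice of how fast the test function $h_t$ concentrates) while keeping the discontinuous parabolic/scattering terms under control: one must show that the potentially divergent $\log\abs{1-\chi_\e(\g_{\a_i})}$ terms (for cusps that are open at $\e=0$ but closed for $\e\neq 0$) are exactly compensated, in the limit, by the change in the number of open cusps in the $\log 2$ and $\Gamma'/\Gamma$ terms and by the scattering determinant's behavior near $s=1/2$ — i.e. that the total "cusp contribution" is continuous even though its individual summands are not. This is the content that makes Remark \ref{discontinuous} a warning rather than an obstruction, and I expect it is handled by carefully tracking the Eisenstein series as a cusp closes (the Eisenstein series for a closing cusp degenerates into a residual-type object), so that the net effect on the counting function below $1/4$ is null. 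Once that cancellation is in place, the continuity of $N(T,\e)$ at $\e=0$ follows by the squeeze.
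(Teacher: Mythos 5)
Your proposal reaches for the right machinery — the Selberg trace formula with a heat‑kernel test function and the Laplace‑transform setup that the paper introduces just before the lemma — and you correctly flag that the dangerous terms are the parabolic $\log\abs{1-\chi_\e(\g_{\a_i})}$ term, the $\tr\bigl(I-\Phi(1/2,\chi_\e)\bigr)$ term, the scattering integral, and the $\Gamma'/\Gamma$ term, which are discontinuous in $\e$ because cusps close up. But from there the two arguments diverge, and your version has a genuine gap.

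You propose to \emph{isolate} these discontinuous contributions and prove that the divergences "exactly compensate" — that the blow‑up of $\log\abs{1-\chi_\e(\g_{\a_i})}$ is cancelled by the jump in the number of open cusps, the scattering term, and the degeneration of the Eisenstein series as a cusp closes. This cancellation is exactly what your write‑up \emph{assumes} rather than proves ("I expect it is handled by\ldots"), and it is a substantial claim: nothing in the setup hands you such an identity, and tracking the degeneration of $E_i(z,s,\chi_\e)$ as a cusp closes to make the total cusp contribution continuous is precisely the delicate analysis the paper avoids. The paper's key observation is that no cancellation is needed at all: after multiplying the trace formula (with $h(r)=e^{-zr^2}$, normalized by $e^{-z/4}$) by $\laplace(f)(z)e^{zT}/z$ and integrating along $\Re(z)=a>0$, every one of the problematic terms — as well as the identity, elliptic, continuous‑spectrum, and $\Gamma'/\Gamma$ terms — \emph{vanishes identically}, because in the $\lambda$‑variable their inverse Laplace transforms are supported on $[1/4,\infty)$ (they all carry the shift $e^{-z/4}$ together with factors analytic in $\Re(z)>0$), while the cutoff $T<1/4$ extracts only the part below $1/4$. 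What survives is just $\sum_{\lambda_n(\chi_\e)\le T}f_1(T-\lambda_n(\chi_\e))$ against the hyperbolic sum, and the hyperbolic side is continuous in $\e$ by dominated convergence since $\chi_\e(\g)\to\chi_0(\g)$ pointwise over the (absolutely convergent) sum. So the restriction $T<1/4$ is not merely a convenience for separating $T$ from the spectrum — it is the mechanism that removes the discontinuous terms entirely, and your proposal misses it.

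There is a second, smaller omission: the Laplace‑inversion step naturally produces the weighted (Riesz‑mean) counting functions $N_w(T,\e)=\sum_{\lambda_n\le T}(T-\lambda_n)^w$ with $w\ge 1$, not the sharp counting function $N_0(T,\e)$. The paper closes this gap with an elementary sandwiching argument: from monotonicity and the mean value theorem one has $\frac{N_1(T,\e)-N_1(T-\delta,\e)}{\delta}\le N_0(T,\e)\le\frac{N_1(T+\delta,\e)-N_1(T,\e)}{\delta}$, and since $T$ is assumed not to be an eigenvalue of $L(0)$, letting $\e\to 0$ and then $\delta\to 0$ pinches $N_0(T,\e)$ to $N_0(T,0)$. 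Your proposal gestures at a squeeze argument but never addresses the passage from the smoothed spectral data that the trace formula gives to the sharp count $N_0$. Both of these steps — the vanishing of the discontinuous terms under the $T<1/4$ cutoff, and the $N_1\to N_0$ approximation — are essential, and neither is present in your sketch.
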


From this we readily deduce the following corollary:

\begin{corollary}\label{partytime}
 Let $\lambda$ be any number such that $0<\lambda<1/4$. For sufficiently small $\e$ the number of eigenvalues of $L(\e)$ which are close to $\lambda$ (counted with multiplicity) is equal to the multiplicity with which $\lambda$ is an eigenvalue for $L(0).$ More precisely 
\begin{equation}\lim_{\e\to 0}(N(\l+\delta,\e)-N(\l-\delta,\e))=\lim_{t\to 0_+}N_0(\l+t,0)-N_0(\l-t,0)\end{equation}
for sufficiently small $\delta>0$.  
\end{corollary}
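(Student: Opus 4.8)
The plan is to derive the corollary directly from Lemma \ref{small-eigenvalues}, which asserts that for any fixed $T<1/4$ which is not an eigenvalue of $L(0)$, the counting function $\e\mapsto N(T,\e)$ is continuous at $\e=0$. First I would observe that since $0<\lambda<1/4$ and the discrete spectrum of $L(0)$ is a closed discrete set in $[0,1/4)$, we may choose $\delta>0$ small enough that neither $\lambda+\delta$ nor $\lambda-\delta$ is an eigenvalue of $L(0)$, that $\lambda+\delta<1/4$, and that $\lambda$ is the only eigenvalue of $L(0)$ (if any) in the closed interval $[\lambda-\delta,\lambda+\delta]$. With such a $\delta$ fixed, both $T=\lambda+\delta$ and $T=\lambda-\delta$ satisfy the hypotheses of Lemma \ref{small-eigenvalues}, so
\begin{equation*}
\lim_{\e\to 0}N(\lambda+\delta,\e)=N_0(\lambda+\delta,0),\qquad \lim_{\e\to 0}N(\lambda-\delta,\e)=N_0(\lambda-\delta,0),
\end{equation*}
where $N_0(\cdot,0)$ denotes the counting function for $L(0)$; at points that are not eigenvalues of $L(0)$ this agrees with the left-continuous version appearing in the statement. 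Subtracting, the left-hand limit in the corollary exists and equals $N_0(\lambda+\delta,0)-N_0(\lambda-\delta,0)$.

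Next I would identify this difference with the quantity on the right-hand side of the asserted identity. By the choice of $\delta$, the only eigenvalue of $L(0)$ in $]\lambda-\delta,\lambda+\delta]$ is possibly $\lambda$ itself, counted with its multiplicity $m$ as an eigenvalue of $L(0)$ (with $m=0$ if $\lambda$ is not an eigenvalue). Hence $N_0(\lambda+\delta,0)-N_0(\lambda-\delta,0)=m$. On the other hand, for all sufficiently small $t>0$ the interval $]\lambda-t,\lambda+t]$ contains no eigenvalue of $L(0)$ other than $\lambda$, so $N_0(\lambda+t,0)-N_0(\lambda-t,0)=m$ as well, and therefore $\lim_{t\to 0_+}\bigl(N_0(\lambda+t,0)-N_0(\lambda-t,0)\bigr)=m$. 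Combining the two paragraphs gives
\begin{equation*}
\lim_{\e\to 0}\bigl(N(\lambda+\delta,\e)-N(\lambda-\delta,\e)\bigr)=m=\lim_{t\to 0_+}\bigl(N_0(\lambda+t,0)-N_0(\lambda-t,0)\bigr),
\end{equation*}
which is exactly the displayed identity in the corollary; the informal statement that the number of eigenvalues of $L(\e)$ near $\lambda$ equals $m$ is the same assertion read off from $N(\lambda+\delta,\e)-N(\lambda-\delta,\e)$ for $\e$ small.

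I do not expect any genuine obstacle here: the entire content has been placed in Lemma \ref{small-eigenvalues}, and the corollary is a bookkeeping argument about counting functions. The only point requiring a modicum of care is the interplay between the left-continuous normalization used in the statement of the corollary and whatever normalization is used for $N(T,\e)$, together with the requirement that $\lambda\pm\delta$ avoid the (discrete) spectrum of $L(0)$ so that Lemma \ref{small-eigenvalues} applies at those points; both are handled by the initial choice of $\delta$.
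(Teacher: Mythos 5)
Your proof is correct and follows exactly the route the paper intends: the paper itself merely says the corollary is "readily deduced" from Lemma \ref{small-eigenvalues} without writing the argument out, and your bookkeeping (choosing $\delta$ so that $\lambda\pm\delta$ avoid the discrete spectrum of $L(0)$, applying the lemma at both endpoints, and counting eigenvalues in the interval) is precisely that deduction.
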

We now prove Lemma \ref{small-eigenvalues}:
\begin{proof}
  Consider the Selberg trace formula (\ref{sporformlen}) with the test function $h(r)=\exp(-zr^2)$ where $z$ is complex with $\Re(z)>0$. This is certainly admissible. Then $g(x)=(4\pi z)^{-1/2}\exp(-x^2/4z)$. We multiply the resulting trace formula by $\exp(-z/4)$ giving an identity of the form 
  \begin{equation}\label{identity-1}
    \sum e^{-z\lambda_i(\chi_\e)}+\ldots
  \end{equation}

Let $f$ be sufficiently nice, e.g $f(u)=u^{w-1}$, $w-1\geq 0$.
 Let $T<1/4$ and multiply (\ref{identity-1}) by $\laplace{f}(z)e^{zT}/z$. By using (\ref{god-invers}) with $\Re(z)=a>0$ it is straightforward to derive that
\begin{equation}\label{continuous-equation}
 \sum_{\lambda_n(\chi_\e)\leq T}f_1(T-\lambda_i(\chi_\e)) = \sum_{\{\g\}_\G \in\mathcal{P}}\sum_{k=1}^\infty\frac{\chi_\e(\g^k)2\log N(\g)}{N(\g)^{k/2}-N(\g)^{-k/2}}v_T(k\log N(\g))
\end{equation}
where
\begin{equation*}
  v_T(x)=\frac{1}{2\pi i}\int_{a-i\infty}^{a+i\infty}\frac{\exp(-z/4-x^2/4z)}{\sqrt{4\pi z}}\frac{\laplace(f)(z)e^{zT}}{z}dz
\end{equation*}
We note that $T<1/4$ is crucial. If $T>1/4$ there will be more terms. 
The above operation removes discontinuous terms in the trace formulae (See remark \ref{discontinuous})
By using dominated convergence we easily see that the left hand side of (\ref{continuous-equation}) is continuous in $\e$. Hence, as long as $T<1/4$ and $w\geq 1$
\begin{equation}\label{easter}
N_w(T,\e):= \sum_{\lambda_n(\chi_\e)\leq T}(T-\lambda_i(\chi_\e))^w 
\end{equation}
is continuous in $\e$ at $\e=0$. The theorem is complete if we can verify that when $T<1/4$ not an eigenvalue for $\e=0$  $N_0(T,\e)$ is continuous in $\e$ at $\e=0$. This follows from the above by an approximation argument: 

By an elementary consideration, using the mean value theorem and that  $N_w(T,\e)$ is monotonically increasing in $T$ 
\begin{equation}\label{meanvalue}
  N_0(T,\e)\leq \frac{ N_1(T+\delta,\e) - N_1(T,\e)}{\delta}\leq N_0(T+\delta,\e)
\end{equation}
when $0<\delta$ and $\delta+T<1/4$.
Letting $\e\to 0$ and using that $N_1(T,\e)$ is continuous when $\e\to 0$  the first inequality in (\ref{meanvalue}) gives 
\begin{equation}\label{soon}
  \limsup_{\e\to 0} N_0(T,\e)\leq \frac{ N_1(T+\delta,0) - N_1(T,0)}{\delta}
\end{equation}
Equation (\ref{meanvalue}) gives also
\begin{equation*}
  \frac{ N_1(T,\e) - N_1(T-\delta,\e)}{\delta}\leq N_0(T,\e)
\end{equation*}
which by considering $\e\to 0$ gives
\begin{equation}\label{iwill}
   \frac{ N_1(T,0) - N_1(T-\delta,0)}{\delta}\leq \liminf_{\e\to 0}N_0(T,\e)
\end{equation}
We then use that since $T$  is not an eigenvalue when $\e=0$
\begin{equation*}
 N_0(T,0)=\lim_{\delta\to 0} \frac{ N_1(T+\delta,0) - N_1(T,0)}{\delta}
\end{equation*}
 to conclude from (\ref{soon}) and (\ref{iwill}) that 
 \begin{equation*}
    \limsup_{\e\to 0} N_0(T,\e)\leq  N_0(T,0) \leq \liminf_{\e\to 0}N_0(T,\e).
 \end{equation*}
It  follows that $N_0(T,\e)$ is continuous at $\e=0$ if  $T$  is not an eigenvalue when $\e=0$.
\end{proof}

\begin{lemma}  \label{region-of-strong-convergence} Assume that $\chi_0$ leaves some cusps open. The region of strong convergence for $L(\e)$ equals $\rho(L(0))$ -- the resolvent set of the automorphic Laplacian $A(\G,\chi_0)$ on $\rum{\Gamma,\chi_0}$ -- with 0 as the only possible exception. 
\end{lemma}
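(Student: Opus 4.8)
The plan is to apply Theorem \ref{coreconvergence} (Kato's core criterion) in tandem with Lemma \ref{small-eigenvalues}. Recall from \eqref{perturbed} that $L(\e)=A(\G,0)+\e L_\e+\e^2 L_{\e\e}$ on the core $D=D_{\G,\chi_0}$, and that by Proposition \ref{gooddecay} and the remark following it all the relevant subspaces sit inside $D$. For each $u\in D$ the vectors $L_\e u$ and $L_{\e\e} u$ are fixed (independent of $\e$) elements of $\rum{\G,\chi_0}$, so $L(\e)u\to A(\G,0)u=L(0)u$ trivially as $\e\to 0$. Hence the only hypothesis of Theorem \ref{coreconvergence} that needs checking is that $\rho(L(0))\cap\Delta_b\neq\emptyset$, where $\Delta_b$ is the region of boundedness for the family $\{(L(\e)-\zeta)^{-1}\}$. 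Once this is established, Theorem \ref{coreconvergence} gives that $L(\e)\to L(0)$ in the generalized sense and $\Delta_s=\rho(L(0))\cap\Delta_b$, which is exactly the content of the lemma up to identifying $\Delta_b$.

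The main work, therefore, is to show that $\Delta_b$ contains all of $\rho(L(0))$ with the possible exception of the point $\zeta=0$ (equivalently $s(1-s)=0$). I would argue as follows. Fix $\zeta_0\in\rho(L(0))$ with $\zeta_0\neq 0$. Since the continuous spectrum of every $L(\e)$ (being unitarily equivalent to $A(\G,\chi_\e)$) is contained in $[1/4,\infty[$ when $\chi_\e$ leaves a cusp open, and since the discrete spectrum below $1/4$ varies continuously by Lemma \ref{small-eigenvalues} (more precisely Corollary \ref{partytime}), one sees that for $\zeta_0$ in the region $\Re(\zeta_0)<1/4$ the distance $\mathrm{dist}(\zeta_0,\sigma(L(\e)))$ stays bounded away from $0$ for $\e$ small; combined with the resolvent estimate \eqref{resolventbound} this gives boundedness of $\{(L(\e)-\zeta_0)^{-1}\}$, so $\zeta_0\in\Delta_b$. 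For $\zeta_0$ with $\Re(\zeta_0)\geq 1/4$ but $\zeta_0\notin[1/4,\infty[$ (i.e. genuinely off the continuous spectrum), one argues similarly: the discrete spectrum in this range moves continuously as well (the large eigenvalues are handled by the same trace-formula argument, or more crudely by min-max together with the bounded perturbation estimate on a spectral subspace), so again $\mathrm{dist}(\zeta_0,\sigma(L(\e)))\not\to 0$ and \eqref{resolventbound} applies. The upshot is $\rho(L(0))\setminus\{0\}\subseteq\Delta_b$.

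For the reverse inclusion $\Delta_b\subseteq\rho(L(0))$ (needed so that $\Delta_b$ is not larger than claimed), I would invoke the general principle from Kato's theory quoted in Section \ref{pert1}: once $L(\e)$ converges strongly in the generalized sense to the selfadjoint operator $L(0)$, one has $\Delta_s=\rho(L(0))\cap\Delta_b$, and since $\Delta_b$ is open while points of $\sigma(L(0))$ that are eigenvalues or in the continuous spectrum cannot lie in $\Delta_b$ (a sequence of resolvents cannot stay bounded while approaching a point where the limit resolvent blows up, using strong convergence on the dense set $D$), we get $\Delta_b\subseteq\rho(L(0))$. Combining the two inclusions, $\Delta_b=\rho(L(0))$ up to the point $0$, and then $\Delta_s=\rho(L(0))\cap\Delta_b=\rho(L(0))$ minus possibly $0$, which is the assertion.

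The point $\zeta=0$ (i.e.\ $s=1$) must be excluded because it corresponds to the bottom of the spectrum $\lambda_0=0$ with the constant eigenfunction, and here the character perturbation genuinely changes the situation: for $\e\neq 0$ small the cusp where $f$ fails to be cuspidal may \emph{close}, so $0$ need not remain an eigenvalue and $\lambda_0(\chi_\e)$ may jump. This is precisely the discontinuity flagged in Remark \ref{discontinuous} (the last two lines of \eqref{sporformlen} are not continuous at $\e=0$), so no control of the resolvent near $\zeta=0$ can be expected, and this is the one genuine obstacle; fortunately small eigenvalues of interest lie in $]0,1/4[$, strictly away from $0$, so the exclusion is harmless for the applications. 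I expect the technical heart of the write-up to be the uniform-in-$\e$ lower bound on $\mathrm{dist}(\zeta_0,\sigma(L(\e)))$ for $\zeta_0$ in the part of $\rho(L(0))$ with $\Re(\zeta_0)\geq 1/4$, since there one cannot simply appeal to Lemma \ref{small-eigenvalues} and must instead combine the continuity of the continuous spectrum's edge with a separation argument for the embedded and large discrete eigenvalues.
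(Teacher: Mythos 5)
Your overall structure is right — invoke Kato's core criterion (Theorem \ref{coreconvergence}, summarized in Remark \ref{faddeevishere}) and then show $\rho(L(0))\setminus\{0\}\subseteq\Delta_b$ — but there is a genuine gap in how you establish that inclusion, and it stems from missing a simple observation that makes the whole thing much shorter.

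The point you miss is that $L(\e)$ is \emph{selfadjoint}, so the spectral theorem gives $\norm{(L(\e)-\zeta)^{-1}}\leq 1/\abs{\Im(\zeta)}$ for every $\zeta$ with $\Im(\zeta)\neq 0$, uniformly in $\e$. This immediately puts the entire non-real part of $\C$ into $\Delta_b$, with no spectral analysis at all. That leaves only real $\zeta\in\rho(L(0))\setminus\{0\}$, and since $\chi_0$ leaves a cusp open, $[1/4,\infty[$ is contained in $\sigma(L(0))$, so such real $\zeta$ satisfies $\zeta<1/4$. Then $\zeta<0$ is handled by non-negativity of $L(\e)$, and $0<\zeta<1/4$ by Corollary \ref{partytime}, exactly as you say. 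Instead of this, you split into $\Re(\zeta_0)<1/4$ versus $\Re(\zeta_0)\geq 1/4$ and in the latter case try to control "large eigenvalues" either by "the same trace-formula argument" or "by min-max together with the bounded perturbation estimate." Both of these are problematic: the proof of Lemma \ref{small-eigenvalues} crucially requires $T<1/4$ (as the paper explicitly flags — for $T>1/4$ extra discontinuous terms appear on the continuous-spectrum side), and $L_\e$ is not a bounded perturbation, so the usual min-max comparison does not apply. So the case $\Re(\zeta_0)\geq 1/4$, $\Im(\zeta_0)\neq 0$ is left unproved in your write-up, whereas the selfadjoint resolvent bound dispatches it in one line.

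A smaller remark: the reverse inclusion $\Delta_b\subseteq\rho(L(0))$ that you spend a paragraph on is not actually needed. The lemma is about $\Delta_s$, and Remark \ref{faddeevishere} already tells you $\Delta_s=\rho(L(0))\cap\Delta_b$; once you know $\rho(L(0))\setminus\{0\}\subseteq\Delta_b$, this gives $\rho(L(0))\setminus\{0\}\subseteq\Delta_s\subseteq\rho(L(0))$ directly, which is the assertion. Your heuristic explanation for why $\zeta=0$ must be excluded (closing cusps, bottom of the spectrum jumping) is a nice bit of motivation and consistent with Remark \ref{discontinuous}, though not part of the proof.
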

\begin{proof}
We show that $\Delta_b$ contains  $\rho(L(0))\backslash\{0\}$. The result will then follow from Remark \ref{faddeevishere}. Consider $\zeta\in \rho(L(0)) \backslash\{0\}$. If $\Im (\zeta)\neq 0$ then it follows from (\ref{resolventbound}) and the fact that $L(\e)$ is selfadjoint that  $\norm{R(\zeta,\e)}\leq 1/{\Im(\zeta)}$ which shows that $\zeta\in \Delta_b$

If $\zeta\in \R$ then $\zeta<1/4$  since we assume that $\chi_0$ leaves a cusp open i.e. there is continuous spectrum in $[1/4,\infty[$. If $\zeta<0$ then the result follows from (\ref{resolventbound}) and non-negativity of $L(\e)$. If $0<\zeta<1/4$ it follows from Corollary \ref{partytime} that for $\e$ small enough there are no eigenvalues of $L(\e)$ in a small neighborhood of $\zeta$ and it then follows from (\ref{resolventbound}) that $\zeta\in\Delta_b$.
 \end{proof}
\begin{remark}
We note that without the assumption of $\chi_0$ in Lemma \ref{region-of-strong-convergence} we can still conclude (with the same proof) that the region of strong convergence \emph{contains}  $\rho(L(0))\backslash (\{ 0 \}\cup [1/4,\infty[)$. But in this case the perturbation is analytic and much stronger results than what we are obtaining are available. \end{remark}

\begin{theorem}\label{stable} The operator $L(\e)$ converges strongly to $L(0)$ in the generalized sense. Let $\lambda<1/4$ be a small eigenvalue for the automorphic Laplacian $A(\Gamma, \chi_0)$ on $\rum{\Gamma,\chi_0}$. Then $\lambda$ is stable in the sense of Kato under character perturbation.
\end{theorem}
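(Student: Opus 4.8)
The plan is to deduce the generalized strong convergence from Kato's core criterion (Theorem~\ref{coreconvergence}), and then to verify the two conditions in the definition of a stable eigenvalue by combining Lemma~\ref{region-of-strong-convergence} with the continuity of the eigenvalue counting function established in Lemma~\ref{small-eigenvalues} and Corollary~\ref{partytime}.

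First I would check that $L(0)$, $L_\e$, $L_{\e\e}$ together with the parameter $\e$ form a family of the kind listed just before Theorem~\ref{kato-bigtheorem}, taking $D:=D_{\G,\chi_0}$. Indeed $T:=L(0)=A(\G,\chi_0)$ is closed; by (\ref{leavingsoon}) the operators $L_\e$ and $L_{\e\e}$ are the closures of operators defined on $D_{\G,\chi_0}$, so $D_{\G,\chi_0}$ lies in all three domains and is a core of $L(0)$; and by (\ref{perturbed}) and the construction following it, $L(\e)$ is the closure of $A(\G,0)+\e L_\e+\e^2L_{\e\e}$ on $D_{\G,\chi_0}$, hence an extension of it. For $u\in D_{\G,\chi_0}$ we have $L(\e)u=L(0)u+\e L_\e u+\e^2 L_{\e\e}u$ with $L_\e u$ and $L_{\e\e}u$ fixed $L^2$-vectors, whence $L(\e)u\to L(0)u$ as $\e\to0$. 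Finally $\rho(L(0))\cap\Delta_b\neq\emptyset$: any $\zeta$ with $\Im(\zeta)\neq0$ belongs to $\Delta_b$ by the selfadjointness bound~(\ref{resolventbound}), as in the first step of the proof of Lemma~\ref{region-of-strong-convergence}. By Remark~\ref{faddeevishere}, Theorem~\ref{coreconvergence} then yields that $L(\e)$ converges strongly to $L(0)$ in the generalized sense, with $\Delta_s=\rho(L(0))\cap\Delta_b$.

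Next I would establish stability of a small eigenvalue $\lambda$. Because $0<\lambda<1/4$ and the spectrum of $A(\G,\chi_0)$ below $1/4$ consists of finitely many eigenvalues, $\lambda$ is isolated and of finite multiplicity. Fix $\delta>0$ smaller than $\lambda$, than $1/4-\lambda$, and than the distance from $\lambda$ to $\sigma(L(0))\setminus\{\lambda\}$, and with $\lambda\pm\delta\notin\sigma(L(0))$. Then the punctured disc $\{0<\abs{\zeta-\lambda}<\delta\}$ is contained in $\rho(L(0))\setminus\{0\}$, which by Lemma~\ref{region-of-strong-convergence} is contained in $\Delta_s$ (in the case that $\chi_0$ leaves no cusp open, the perturbation is analytic by Remark~\ref{discontinuous} and stability is classical); this is condition~(i) of stability. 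For condition~(ii) observe that, $L(\e)$ being selfadjoint with continuous spectrum $[1/4,\infty[$ and $\lambda+\delta<1/4$, the only spectrum of $L(\e)$ inside $\{\abs{\zeta-\lambda}<\delta\}$ consists of finitely many eigenvalues, so $\dim P_\e$ equals the number of eigenvalues of $L(\e)$ in that disc, counted with multiplicity. Since $\lambda\pm\delta\in\rho(L(0))$, for small $\e$ no eigenvalue of $L(\e)$ lies near $\lambda\pm\delta$ (by Corollary~\ref{partytime}), so this number is exactly $N(\lambda+\delta,\e)-N(\lambda-\delta,\e)$ in the notation of Lemma~\ref{small-eigenvalues}. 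By Corollary~\ref{partytime} this difference tends, as $\e\to0$, to the multiplicity of $\lambda$ as an eigenvalue of $L(0)$, that is, to $\dim P$; since these are non-negative integers with limit $\dim P$, we get $\dim P_\e=\dim P$ for all sufficiently small $\e$. In particular $\dim P_\e\leq\dim P$, which is condition~(ii), and $\lambda$ is stable.

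The genuinely hard analytic work is already done: the continuity at $\e=0$ of $N(T,\e)$ for $T<1/4$ — proved in Lemma~\ref{small-eigenvalues} by extracting a continuous piece of the Selberg trace formula despite the discontinuity of the parabolic and scattering terms noted in Remark~\ref{discontinuous} — is precisely what powers condition~(ii), while Lemma~\ref{region-of-strong-convergence} supplies condition~(i). Given these inputs, the only point in the present proof that needs a little care is the identification of $\dim P_\e$ with the counting-function difference, i.e.\ ruling out that continuous spectrum or stray eigenvalues of $L(\e)$ enter the contour $\abs{\zeta-\lambda}=\delta$; this is exactly where the strict inequality $\lambda<1/4$ enters, and it is the only subtlety once Lemma~\ref{region-of-strong-convergence} and Corollary~\ref{partytime} are available.
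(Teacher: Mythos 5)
Your proof is correct and follows the same route as the paper: verify the Kato framework (\ref{farfar})--(\ref{notempty}) for $L(0), L_\e, L_{\e\e}$ with core $D_{\G,\chi_0}$, invoke Remark~\ref{faddeevishere} for generalized strong convergence, then obtain condition (i) of stability from Lemma~\ref{region-of-strong-convergence} and condition (ii) from Corollary~\ref{partytime}. You simply make explicit the verifications that the paper leaves to the reader, including the harmless edge case in which $\chi_0$ leaves no cusp open and the identification of $\dim P_\e$ with the spectral counting difference.
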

\begin{proof} The operator $L(\e)$ fits in the framework of (\ref{farfar})-(\ref{notempty}) before Remark \ref{faddeevishere}  so by Remark \ref{faddeevishere} the family of operators $L(\e)$, $0<\e\leq 1$,  converges strongly to $L(0)$ in the generalized sense.
It follows from Lemma \ref{region-of-strong-convergence} that $\Delta_s$ contains a deleted neighborhood of $\lambda$ and it follows from Corollary \ref{partytime} that $\dim P=\dim P_\e$. We conclude that $\lambda$ is stable in the sense of Kato.  
\end{proof}

\subsection{Families of eigenfunctions, and vanishing of the Phillips-Sarnak condition} We concluded the preceding section with Theorem \ref{stable} which allows us to apply Kato's Theorem \ref{kato-bigtheorem} to character perturbations of small eigenvalues.  In this section we study the ramifications of this for the Phillips-Sarnak integral.
From this point of most of our considerations makes sense only if we  assume that the group $\G$ has parabolic elements (i.e. $\GmodH$ has cusps). So from now on we assume that $i\infty$ is a cusp and that $f$ is cuspidal at infinity, i.e.
\begin{equation*}
  \int_0^1f(z)dx=0.
\end{equation*}i.e. in the notation from the introduction we are assuming that $f\in M_2^\infty(\G)$. We will construct such functions in section \ref{formsconstructed}.

Theorem \ref{kato-bigtheorem} and Theorem \ref{stable} implies that for an eigenvalue $0<\lambda=s_0(1-s_0)<1/4$ with eigenfunction $\f(z)$ there exist a family of eigenfunctions
\begin{equation*}
  \e\mapsto \f(z,\e)
\end{equation*} 
$0\leq \e<\e_0$, $\f(z,\e)\in \rum{\G}$ of with 
\begin{equation*}
  (L(\e)-\lambda(\e))\f(z,\e)=0, \qquad \f(z,0)=\f(z)
\end{equation*}
such that  $\f(z,\e)$ is right-differentiable in $L^2(\G,\chi_0)$) at $\e=0$, and $\lambda(\e)$ is right-differentiable at $\e=0$. By the discussion in section \ref{basics} the pull-back $\hat\f(z,\e):=U(\e)\f(z,\e)$ is (for each fixed $\e$) a linear combination of a cuspidal eigenfunction of $A(\G,\chi_\e)$ and residual eigenfunctions.

We need some control over how the first variation $\f'(z,s)$ of $\f(z,\e)$ behaves at $y\to\infty$. Faddeev's theory (Proposition \ref{faddeev-result}) gives $O(y^{1-s_0+\varepsilon})$ but we need something slightly better. In fact if we knew that the first variation satisfies $O(y^{1-s_0-\varepsilon})$ we could prove Selberg's conjecture unconditionally. To circumvent this we make one more assumption. We write $E_i(z,s,\e):=E_i(z,s,\chi_\e)$.
\begin{assumption}\label{goaway} We assume that for $\delta>0$ sufficiently small there exist $\e_0>0$ such that $E_{\infty}(z,s,\e)$ is regular when $\abs{s-s_0}<\delta$ and $\abs{\e}\leq\e_0$. \end{assumption}
This assumption clearly implies that  $\hat\f(z,\e)$ does not have a
residual component in the direction of the cusp at infinity. We notice
that this is really an assumption on the group $\Gamma$, and that we
are still not making any assumption about the group being
arithmetic; only that the corresponding Eisenstein series doesn't
develop a pole close to $s_0$ when the character perturbation is \lq
turned on \rq.

 \begin{theorem} \label{integraliszero}
 Let $\f$ be an eigenfunction related to a \emph{small} eigenvalue $\lambda=s_0(1-s_0)<1/4$ for the Laplacian on $\rum{\G,\chi}$. On Assumption \ref{goaway} we have  
 \begin{equation}\label{phillips-sarnak}
   \int_{\F_\G}L_\e\f(z)\overline{E_\infty(z,s_0,0)}d\mu(z)=0.
 \end{equation}
 \end{theorem}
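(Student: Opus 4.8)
The plan is to apply Kato's asymptotic perturbation theory (Theorem \ref{kato-bigtheorem}) to the stable small eigenvalue $\lambda=s_0(1-s_0)$, as guaranteed by Theorem \ref{stable}, and to extract the vanishing of the Phillips--Sarnak integral from the first-order expansion of the eigenprojection. Concretely, write $T=L(0)$, $T^{(1)}=L_\e$, $T^{(2)}=L_{\e\e}$, so that $L(\e)=T+\e T^{(1)}+\e^2 T^{(2)}$ on the core $D_{\G,\chi_0}$. By Proposition \ref{gooddecay} and the accompanying remark, $PX\subset D_{\G,\chi_0}$, so all hypotheses of Theorem \ref{kato-bigtheorem} are met. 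The family of eigenfunctions $\f(z,\e)$ described before the statement is right-differentiable at $\e=0$, and differentiating $(L(\e)-\lambda(\e))\f(z,\e)=0$ at $\e=0$ gives
\begin{equation*}
  (L(0)-\lambda)\f'(z,0)=-(L_\e-\lambda'(0))\f(z).
\end{equation*}
Since $\f'(z,0)$ lives in a space where the reduced resolvent $S_\lambda$ is available, one gets (after projecting off $P$) that the component of $\f'(z,0)$ orthogonal to the eigenspace equals $-S_\lambda(L_\e\f - \lambda'(0)\f) = -S_\lambda L_\e\f$, which is precisely the content of Kato's formula (\ref{second-term-in-projection}) for $P_j^{11}$ restricted to $\f$.

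The key step is then to pair the differentiated eigenvalue equation against $\overline{E_\infty(z,s_0,0)}$ and exploit orthogonality. First I would observe that $E_\infty(z,s_0,0)$ is (by Assumption \ref{goaway} with $\e=0$, or simply because $s_0\in\,]1/2,1[$ is not a pole when $\chi_0$ is the unperturbed character and there are no residual eigenvalues in $]0,1/4[$) a genuine eigenfunction of $L(0)$ with eigenvalue $s_0(1-s_0)=\lambda$ in the weak sense: $\Delta E_\infty(z,s_0,0)=\lambda E_\infty(z,s_0,0)$. So formally $(L(0)-\lambda)$ annihilates $E_\infty(z,s_0,0)$, and pairing the above eigenvalue equation with $\overline{E_\infty(z,s_0,0)}$ and moving the self-adjoint operator $L(0)-\lambda$ onto the Eisenstein series would give $0$ on the left, hence
\begin{equation*}
  \int_{\F_\G}(L_\e\f(z)-\lambda'(0)\f(z))\overline{E_\infty(z,s_0,0)}d\mu(z)=0.
\end{equation*}
Since $\f$ is a Maa{\ss} cusp form (or more generally square-integrable with the cuspidal component being a cusp form) it is orthogonal to $E_\infty(z,s_0,0)$ by (\ref{orthogonal-almost}), so the $\lambda'(0)\f$ term drops out and we obtain exactly (\ref{phillips-sarnak}).

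The main obstacle is making the integration-by-parts rigorous: $E_\infty(z,s_0,0)$ is \emph{not} square-integrable — it grows like $y^{s_0}+\phi(s_0)y^{1-s_0}$ at the cusps — so one cannot naively move $L(0)-\lambda$ across the pairing as if in $L^2$, and one must check the boundary terms at height $Y\to\infty$ actually vanish. This is where Proposition \ref{faddeev-result} and the refined growth control enter: $\f'(z,0)=S_\lambda L_\e\f$ lies in $B_{1-s_0+\e}$, while $L_\e\f$ and $\f$ decay exponentially (Proposition \ref{gooddecay}), so the relevant Green's-formula boundary integrals over $\{\Im z = Y\}$ involve products like $y^{1-s_0+\e}\cdot y^{s_0}\cdot y^{-2}\,dx = y^{-1+\e}\,dx$ near the cusp at infinity — which, after Assumption \ref{goaway} is invoked to control the $y^{1-s_0}$ part of the Eisenstein expansion, one checks tend to $0$ provided $\e$ is chosen small enough. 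I would carry this out by truncating $\F_\G$ at height $Y$, applying Green's formula on the truncated domain, identifying the boundary contribution explicitly in terms of the zeroth Fourier coefficients of $\f'(z,0)$ and $E_\infty$, and letting $Y\to\infty$; the decay of $\f$ at the \emph{other} cusps kills those boundary terms outright, and Assumption \ref{goaway} together with the $B_{1-s_0+\e}$ bound kills the term at $\infty$. Once the boundary terms are dispatched, the argument above closes and yields (\ref{phillips-sarnak}).
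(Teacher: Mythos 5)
Your overall strategy matches the paper's: apply Kato's Theorem \ref{kato-bigtheorem} to the stable eigenvalue, write the first variation via the reduced resolvent as $S_\lambda L_\e\f$, pair against $\overline{E_\infty(z,s_0,0)}$, use self-adjointness formally, and dispatch the boundary terms by truncation. However, there is a genuine gap at the crucial decay estimate, and your boundary-term bookkeeping is wrong.

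First, the numerology. The boundary contribution in Green's formula for $\Delta=-y^2(\partial_x^2+\partial_y^2)$ on a truncated fundamental domain at height $Y$ is of the form $\int_0^1\bigl(u\,\partial_y v - v\,\partial_y u\bigr)\,dx$ at $y=Y$; the factor $y^{-2}$ you wrote belongs to the volume form of the bulk integral, not to the boundary term. With $u=S_\lambda L_\e\f=O(y^{1-s_0+\e})$ (the Faddeev bound from Proposition \ref{faddeev-result}) and $v=E_\infty(z,s_0,0)\sim y^{s_0}+\phi(s_0)y^{1-s_0}$, the dominant pairing is $y^{1-s_0+\e}\cdot\partial_y(y^{s_0})\sim y^\e$, which does \emph{not} tend to zero as $Y\to\infty$. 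So the Faddeev bound alone does not close the argument — the paper even remarks explicitly that one would need $O(y^{1-s_0-\delta})$ for some $\delta>0$, which Faddeev's theory does not give in general.

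Second, and relatedly, you invoke Assumption \ref{goaway} to ``control the $y^{1-s_0}$ part of the Eisenstein expansion,'' but the assumption says nothing about $E_\infty(z,s_0,0)$ beyond regularity at $s_0$, which is automatic; the zero Fourier coefficient $y^{s_0}+\phi_{\infty\infty}(s_0)y^{1-s_0}$ is what it is. The assumption's actual role — and the missing idea in your proposal — is to control the \emph{first variation}: the paper uses the (generalized) Maa\ss--Selberg relation to show that the zero Fourier coefficient $a_\infty(\e)$ of $\hat\f(z,\e)$ at the cusp at infinity satisfies $a_\infty(\e)=\inprod{\hat\f(z,\e)}{\res_{s_0(\e)}E_\infty(z,s,\e)}$, and then Assumption \ref{goaway} forces $\res_{s_0(\e)}E_\infty(z,s,\e)=0$, hence $a_\infty(\e)=0$ for small $\e$. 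This upgrades the growth of $S_\lambda L_\e P_j^1\f$ from the polynomial Faddeev bound to \emph{exponential decay} at infinity, after which the truncation argument (the paper uses smooth truncation $E_{sm}^T$) goes through cleanly. Without this step your proof does not establish (\ref{phillips-sarnak}).
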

  
\begin{proof}
We may apply Theorem \ref{kato-bigtheorem} with $T=A(\G,\chi_0)=L(0)$, $T^{(1)}=L_\e$, $T^{(2)}=L_{\e,\e}$, and $T(\e)=L(\e)$. We notice that $D=D_T\cap D_{T^{(1)}} \cap D_{T^{(2)}}$ contains $D_{\G,\chi_\e}$ so $P\rum{\Gamma,\chi_0}\subseteq D$ by Proposition \ref{gooddecay}. The eigenvalue $\lambda$ is stable by Theorem \ref{stable}. By linearity of the integral (\ref{phillips-sarnak}) it is enough to prove the claim for eigenfunctions such that $P_j^{1}\f=\f$

We set $\f(z,\e)=P_j(\e)\f$, with corresponding eigenfunction $\lambda(\e)=s_0(\e)(1-s_0(\e))$. By applying $(L(0)-\lambda)$ to (\ref{second-term-in-projection}) we find
\begin{equation}\label{intermediate}
  (L(0)-\lambda)  P_j^{11}=-(L(0)-\lambda) S_\lambda L_\e P_j^{1}
\end{equation}
Using $(L(0)-\lambda)S_\lambda=(1-P)$, $S_\lambda P=0$ (\cite[Ch III, (6.34)]{Kato:1976aa}), and $P(L_\e-\mu^1_{j})P_j=0$ (by the definition of $\mu^1_{j}$) we find 
\begin{align*}
  (L(0)-\lambda)  P_j^{11}=& -(L(0)-\lambda) S_\lambda (L_\e-\mu^1_{j}) P_j^{1}\\
=&-(1-P)(L_\e-\mu^1_{j}) P_j^{1}=-(L_\e-\mu^1_{j}) P_j^{1}
\end{align*}
Therefore,  by using that $\inprod{h}{E_\infty(z,s,0)}=0$ for \emph{any} cusp form $h$ and any $s$ not a pole of $E_\infty(z,s,0)$, we find that (\ref{phillips-sarnak}) equals
\begin{equation*}
  \int_{\F_\G}(L_\e-\mu^1_{j}) P_j^{1}\f \overline{E_\infty(z,s_0,0)}d\mu(z)=
- \int_{\F_\G}(L(0)-\lambda)  P_j^{11}\f \overline{ E_\infty(z,s_0,0)}d\mu(z).
\end{equation*}
Using (\ref{intermediate}) again this equals 
\begin{equation}\label{ceg}
  \int_{\F_\G}(L(0)-\lambda)  S_\lambda L_\e P_j^{1}\f  \overline{E_\infty(z,s_0,0)}d\mu(z).
\end{equation}

This looks like an inner product between  $(L(0)-\lambda)  S_\lambda L_\e P_j^{1}\f$ and $E_\infty(z,s_0)$. We would like to use the selfadjointness of $L(0)$ to move $L(0)-\lambda$ to the other side and then use $(\Delta+s(1-s))E_\infty(z,s)=0$ to conclude that (\ref{ceg}) equals zero. This is of course not rigorous since $E_\infty(z,s_0)$ is not square integrable, but we can make the same idea work using an approximation argument \emph{if we can control the growth of} $S_\lambda L_\e P_j^{1}\f$ as $y\to \infty$. We can do that on Assumption \ref{goaway}.

 We claim that $ S_\lambda L_\e P_j^{1}\f$ decays exponentially as $y\to\infty$. Using (\ref{second-term-in-projection}) and (\ref{projectionexpansion}) we see that this follows if we can prove that $\hat\f(z,\e)=U(\e)\f(z,\e)$ has zero Fourier coefficient at infinity equal to zero for $\e$ sufficiently small, i.e. in the expansion
 \begin{equation*}
   \hat\f(\sigma_iz,\e)=a_i(\e)y^{s_0(\e)}+\sum_{m\neq 0}a_i(m,\e)K_{s_0(\e)-1/2}(2\pi \abs{m} y)e(my)
 \end{equation*} we have $a_\infty(\e)=0$ for $\e$ sufficiently small.
To see this we use the (generalized) Maa\ss-Selberg relation (See \cite{Iwaniec:2002aa} Theorem 6.14) which in our case gives that for $1/2<s<1$, $s\neq s_0(\e)$ we have 
\begin{align*}
\inprod{ \hat\f^Y(z,\e)}{\tilde E_j^Y(z,s,\e)}=&\frac{1}{s_0(\e)-s}\sum_{i}-a_i(\e)\delta_{ij}Y^{s_0(\e)-s}\\
&+\frac{1}{s_0(\e)+s-1}\sum_{i}-a_i(\e)\overline{\Phi_{ij}(s,\e)}Y^{1-s_0(\e)-s}
\end{align*}
when $Y$ is sufficiently large. The sum is over all open cusps and 
\begin{eqnarray*}
  \tilde E_i^Y(z,s,\e)&=&\begin{cases}E_i(z,s,\e)-\delta_{ij}(\Im{\sigma_j^{-1}z})^s-\Phi_{ij}(s,\e)(\Im{\sigma_j^{-1}z})^{1-s}\\ E_i(z,s,\e)\end{cases}\\
  \hat\f^Y(z,\e) &=&\begin{cases} \hat\f(z,\e)-a_j(\e)(\Im{\sigma_j^{-1}z})^{1-s} \\ \hat\f(z,\e)\end{cases}
\end{eqnarray*}
where for both functions the first case is $z\in \sigma_j \F_\infty^Y$ and the second is $z\in \F_0$.
 Taking the residue at $s=s_0(\e)$ and letting $Y\to\infty$ we find, using $\Re(s_0(\e))>1/2$ that
\begin{equation*}
  \inprod{ \hat\f(z,\e)}{\res_{s=s_0(\e)} E_j(z,s,\e)}=a_j(\e)
\end{equation*}
But by Assumption \ref{goaway} $\displaystyle\res_{s_0(\e)} E_\infty(z,s,\e)$ is zero for $\e$ sufficiently small, and hence $a_\infty(\e)=0$ when $\e$ is sufficiently small. This proves that $ S_\lambda L_\e P_j^{1}\f$ decays exponentially as $y\to\infty$.

We can now rigorize the inner product argument alluded to above:

We are using smoothly truncated Eisenstein series. Let $h:\R\to [0,1]$ be a smooth function which satisfies 
\begin{equation*}
  h(t)=\begin{cases}1&\textrm { if }t\leq 0\\0&\textrm { if }t\geq 1.\end{cases}
\end{equation*} Let $h_T(t):=h((t-T)/T)$
We then define the smoothly truncated Eisenstein series as the standard Eisenstein series minus a smooth cut-off of the zero Fourier coefficient, i.e. 
\begin{equation*}
  E_{sm}^T(z,s,\chi_0)=E_\infty(z,s,\chi_0)-h_T(y)(y^s+\Phi_{ij}(s,\chi)y^{1-s})
\end{equation*} if $\G z$ intersects $\F_\infty^Y$ (See \ref{funddomain}). This is well-defined for $T$ sufficiently large. The smoothly truncated Eisenstein series is in $D_{\G,\chi_0}$ as can be readily checked. 
Therefore, using the selfadjointness of $L(0)$ we see that 
\begin{equation}\label{gettingthere}
   \inprod{ (L(0)-\lambda)S_\lambda L_\e\f}{E_{sm}^T(z,s_0,\chi_0)}
\end{equation}
equals
\begin{equation*}
  \inprod{ S_\lambda L_\e\f}{(L(0)-\lambda)E_{sm}^T(z,s_0,\chi_0)}.
\end{equation*}
Since $E_{sm}^T(z,s_0,\chi_0)\in D_{\G,\chi_0}$ we have $L(0)E_{sm}^T(z,s_0,\chi_0)=\Delta E_{sm}^T(z,s_0,\chi_0)$. Therefore we may use (\ref{eisenstein-eigenfunction}) to conclude that $(L(0)-\lambda)E_{sm}^T(z,s_0,\chi_0)$ is non-zero only if $T\leq y \leq 2T$ , and in that case it equals
\begin{align*}
 -y^2h^{''}((y-T)/T)/T^2&(y^s+\Phi_{\infty\infty}(s)y^{1-s})\\ &+-2y^2h^{'}((y-T)/T)/T(sy^{s-1}+\Phi_{\infty\infty}(s)(1-s)y^{-s})
\end{align*}

which is $O(T^{s_0})$ when $T\to\infty$. (The implied constant depends on $\G$ and $s$)

Combining this estimate with the exponential decay of  $ S_\lambda L_\e P_j^{1}\f$ as $y\to\infty$ we easily find that  
\begin{equation}\label{goingtozero}
  \inprod{ S_\lambda L_\e\f}{(L(0)-\lambda)E_{sm}^T(z,s_0,\chi_0)}\to 0 
\end{equation}
as  $T\to\infty$.

To see that (\ref{goingtozero}) implies that (\ref{ceg}) is zero we note that the difference of (\ref{ceg}) and  (\ref{gettingthere}) equals
\begin{equation*}
  \int_{\F_\G} (L(0)-\lambda)S_\lambda L_\e\f(z)(\overline{E_\infty(z,s_0,\chi_0)-E_{sm}^T(z,s_0,\chi_0)}d\mu(z)
\end{equation*} which clearly goes to zero as $T\to\infty$ since  $(L(0)-\lambda)S_\lambda L_\e\f(z)=(1-P)L_\e\f$ decays exponentially at all cusps, and $E_\infty(z,s_0,\chi_0)-E_{sm}^T(z,s_0,\chi_0)$ is zero if $y<T$ and $O(y^{s_0})$ for $y\geq T$. It follows that 
\begin{align*}
\int_{\F_\G} L_\e&\f(z)(\overline{E_\infty(z,s_0,\chi_0)}d\mu(z)\\
  &= \int_{\F_\G} (L(0)-\lambda)  S_\lambda L_\e\f(z)(\overline{E_i(z,s_0,\chi_0)}d\mu(z)\\
&=\lim_{T\to\infty}\inprod{ S_\lambda L_\e\f}{(L(0)-\lambda)E_{sm}^T(z,s_0,\chi_0)}=0
\end{align*}
from which the result follows.
\end{proof}

\begin{remark} The main reason for Assumption \ref{goaway} is to allow to conclude (\ref{goingtozero}). We may draw the same conclusion if $S_\lambda L_\e\f_=O(y^{1-s_0-\delta})$ for some positive $\delta$. Hence any polynomial improvement of what may be concluded from Proposition \ref{faddeev-result} would make  our proof of the Selberg conjecture unconditional. Such an improvement is \emph{not} true in Proposition \ref{faddeev-result} for general groups nor on all of $B_0$. It is true on the set of cusp forms, but since $L_\e\f$ is not cuspidal that doesn't help much. 
\end{remark}

\section{Number theory}\label{number-theory}
We now specialize to a specific type  of arithmetic subgroups of $\slr$ namely the Hecke congruence groups with Dirichlet character. For these we review the theory of Hecke operators and automorphic twists by characters. Using twists by Dirichlet characters we will show that we can arrange for the Phillips-Sarnak integral (\ref{PS}) to be non-zero. This is the main arithmetic tool which will eventually lead to a proof of Theorem \ref{highlow}.

We emphasize that in this section we do not assume that $\lambda$ is a small eigenvalue, so we are not just  describing the subtleties of the empty set.
References for this section is  \cite{Atkin:1970aa, Miyake:2006aa, Bump:2003aa, Duke:2002aa, Iwaniec:2004aa, Stromberg:aa}. 

In  \cite{Duke:2002aa}  -- which gives the most comprehensive account for Maa\ss{} forms -- the character related to the group is assumed to be primitive. We cannot afford this luxury and we  make a review of the general situation. Our statements are close analogues of statements from \cite[Ch 4.]{Miyake:2006aa}. 

\subsection{Primitive forms and $L$-functions}\label{Dirichlet-setup}
Let $\G=\G_0(q)$ and let $\chi:\Z\to S^1$ be an \emph{even} Dirichlet character mod $q$. As usual this give rise to a group character on $\G_0(q)$ by setting 
\begin{equation}
  \chi'(\gamma)=\chi({d}),\qquad \g=\mattwo{a}{b}{c}{d} \in \G_0(q)
\end{equation}

We say that a Maa\ss{} cusp form of the automorphic Laplacian $A(\G_0(q),\chi')$ with eigenvalue $\lambda$  is of  of level $q$ and nebentypus $\chi$.
We denote the space of such forms by 
\begin{equation*}
  S_\lambda(q,\chi) .
\end{equation*}
Every Dirichlet character $\chi$ may be written uniquely as 
\begin{equation*}
  \chi=\chi^*\cdot\chi_0^q
\end{equation*}
where $\chi_0^q$ is the trivial character mod $q$, and $\chi^*$ is the primitive character mod $q^*$ inducing $\chi$, where $q^*\mid q$. We let $S_\lambda^\old(q,\chi)$ be the set of \emph{old}-forms i.e. the linear space generated functions  
\begin{equation*}
  \f(dz) \textrm{ where } dq'\mid q\,\quad q^*\mid q',\quad \f\in S_\lambda(q',\chi^*\cdot\chi_0^{q'})
\end{equation*}
and we let $S_\lambda^\new(q,\chi)$ be the orthogonal complement of $S_\lambda^\old(q,\chi)$ in $S(q,\chi)$, i.e.
\begin{equation*}
  S_\lambda^\new(q,\chi)=S_\lambda(q,\chi)\ominus S_\lambda^\old(q,\chi)
\end{equation*}

The Hecke operators $T_n:S_\lambda(q,\chi)\to S_\lambda(q,\chi)$ $n\in\N $ are defined by
\begin{equation*}
  T_nf(z)=\frac{1}{\sqrt{n}}\sum_{ad=n}\chi(a)\sum_{b \bmod d}f\left(\frac{az+b}{d}\right).
\end{equation*}
(See \cite{Atkin:1970aa}, \cite{Li:1975aa}, \cite{Miyake:2006aa}) and satisfy the relation
\begin{equation*}
  T_mT_n=\sum_{d\mid(m,n)}\chi(d) T_{mn/d^2}
\end{equation*}
In particular the Hecke operators commute. When $(q,n)=1$,  $T_n$ satisfy 
\begin{equation*}
  \inprod{T_nf}{g}=\inprod{f}{\overline{\chi(n)}T_ng},\qquad f,g\in S_\lambda(q,\chi)
\end{equation*}

The Hecke operators map $S^\new_\lambda(q,\chi)$ to itself, and if we restrict to newforms the Hecke operators have the \emph{multiplicity-one} property: Any two eigenfunctions of all $T_n$, $(n,q)=1$, with the same eigenvalues are equal up to multiplication by a scalar. Consequently  if $T:S^\new_\lambda(q,\chi)\to S^\new_\lambda(q,\chi)$ is a linear operator which commutes with all $T_n$, $(n,q)=1$ then every common eigenfunction for  $T_n$, $(n,q)=1$ is also an eigenfunction for $T$

We say that a non-zero newform $f\in S^\new_\lambda(q,\chi)$ is \emph{primitive} (of level $q$, nebentypus $\chi$, and eigenvalue $\lambda$) if it is an eigenfunction of all $T_n$, $(n,q)=1$. By the multiplicity one principle primitive forms are eigenfunction of $T_n$ for  \emph{all} $n\in\N$. The first Fourier coefficient $\rho_\f(1)$ is non-zero and we will therefore always assume that $\f$ is normalized such that $\rho_\f(1)=1$.  This normalization is called the \emph{Hecke normalization}.

\begin{remark}\label{inducedform}For $\f \in S_\lambda(q,\chi)$ not necessarily a newform, the following holds. If $\f$ is an eigenform for all $T_n$, $(n,q)=1$ then there exists a unique primitive  form $\f^{*}\in S^\new_\lambda(q',\chi')$ with $q'\mid q$, such that $\chi(n)=\chi'(n)$ and $\lambda_\f(n)=\lambda_{\f'}(n)$ when $(n,q)=1$. Here 
  \begin{equation*}
    T_n\f= \lambda_\f(n)\f.
  \end{equation*}

\end{remark}

We define the linear involution $V: S^\new_\lambda(q,\chi)\to S^\new_\lambda(q,\chi)$
\begin{equation*}
  V f(z)=f(-\overline z)
\end{equation*}
and the antilinear involution $W: S^\new_\lambda(q,\chi)\to S^\new_\lambda(q,\chi)$
\begin{equation*}
  Wf(z)=\overline{f\left((q\overline{z})^{-1}\right)}.
\end{equation*}
That $W$ maps $S_\lambda(q,\chi)$ to itself is not obvious (See \cite{Iwaniec:1997aa} p. 112). 
These operators satisfy 
\begin{align*}
  T_nV=&V T_n, &\textrm{ for }n\in \N \\
 T_nW=&\chi{(n)}WT_n, &\textrm{ if }(n,q)=1\\
\end{align*}
A primitive form $\f$ is automatically an eigenform of $V$ and $W$ with eigenvalue  $\varepsilon_\f$ and  $\eta_\f$  satisfying $\varepsilon_\f=\pm 1$,  $\abs{\eta_\f}=1$. A primitive form is called \emph{odd} if $\varepsilon_\f=-1$ and \emph{even} if $\varepsilon_\f=1$.

The Hecke operators act on the Fourier expansion at $i\infty$ (See (\ref{fourier-expansion-open}))

\begin{align*}
  T_n \f(z)&=T_n \sum_{m\neq 0}\rho_\f(n)\sqrt{y}K_{s-1/2}(2\pi \abs{m}y)e^{2\pi i mx}\\
&=\sum_{m\neq 0}\left(\sum_{d\mid (n,m)}\chi(d)\rho_\f(mn/d^2)\right)\sqrt{y}K_{s-1/2}(2\pi \abs{m}y)e^{2\pi i mx}
\end{align*}

Therefore the above mentioned properties forces the following relations among the Fourier coefficients which we state as a theorem for easy reference
\begin{theorem}\label{coefficients-properties}
Let  $\f$ be a Hecke-normalized primitive cuspidal Maa\ss{} form of level $q$ and nebentypus $\chi$ of conductor $m_\chi$. Then
\begin{enumerate}
\item \label{forste} $\lambda_\f(n) =\rho_\f(n)$ for $n\in\N$
\item  \label{anden} $\rho_\f(-n)=\varepsilon_\f\rho_\f(n)$ for $n\neq 0$ 
\item \label{tredje} $\lambda_\f(n)\lambda_\f(m)=\sum_{d\mid (m,n)}\chi(d)\lambda_\f(mn/d^2)$ 
\item \label{fjerde} If $p\mid q$ then $\lambda_\f(p^l)=\lambda_\f(p)^l$, and 
  \begin{equation*}
    \abs{\lambda_\f(p)}=\begin{cases}1&\textrm{if } p^k\parallel q,  p^k\parallel m_\chi\textrm{ for some }k \\
p^{-1/2}& \textrm{if } p\parallel q, p\nmid m_\chi\\
0&\textrm{ otherwise.}
\end{cases}
  \end{equation*}
\end{enumerate}
\end{theorem}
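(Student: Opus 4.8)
The strategy is to extract each of the four assertions from the compatibility between the Hecke action on Fourier expansions (displayed just above the statement) and the algebraic properties of the operators $T_n$, $V$, $W$ already recorded. First I would prove (\ref{forste}): write out $T_n\f = \lambda_\f(n)\f$ and compare the $m=1$ Fourier coefficient on both sides, using the formula
\begin{equation*}
  \rho_{T_n\f}(m) = \sum_{d\mid(n,m)}\chi(d)\rho_\f(mn/d^2).
\end{equation*}
For $m=1$ the inner sum collapses to the single term $d=1$, giving $\rho_\f(n) = \lambda_\f(n)\rho_\f(1) = \lambda_\f(n)$ by the Hecke normalization $\rho_\f(1)=1$. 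Substituting (\ref{forste}) back into the general coefficient identity then immediately yields (\ref{tredje}), the Hecke multiplicativity relation, as the $m$-th coefficient of $T_n\f=\lambda_\f(n)\f$ for general $m$.

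For (\ref{anden}) I would use the involution $V$. Since $\f$ is primitive it is an eigenform of $V$ with $V\f = \varepsilon_\f\f$, $\varepsilon_\f=\pm1$; on the other hand $Vf(z)=f(-\overline z)$ acts on the Fourier expansion at $i\infty$ by sending the term $\rho_\f(m)\sqrt y K_{s-1/2}(2\pi|m|y)e^{2\pi i mx}$ to $\rho_\f(m)\sqrt y K_{s-1/2}(2\pi|m|y)e^{-2\pi i mx}$, i.e. it interchanges the coefficients of $e^{2\pi i mx}$ and $e^{-2\pi i mx}$. Comparing coefficients in $V\f=\varepsilon_\f\f$ gives $\rho_\f(-m)=\varepsilon_\f\rho_\f(m)$ for all $m\neq0$, which is (\ref{anden}). (Here I am using the commutation $T_nV=VT_n$ only implicitly, through the fact that primitivity already guarantees $\f$ is a $V$-eigenvector.)

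The genuinely arithmetic part is (\ref{fjerde}), the behaviour at primes $p\mid q$, and this is where the main obstacle lies. The multiplicativity relation (\ref{tredje}) at $m=n=p^l$ with $p\mid q$ forces $\chi(p)=0$ in the convolution, so it degenerates to $\lambda_\f(p^l)\lambda_\f(p) = \lambda_\f(p^{l+1})$, giving $\lambda_\f(p^l)=\lambda_\f(p)^l$ by induction; so only the value of $|\lambda_\f(p)|$ remains. To pin it down one passes to the primitive form $\f^*$ of level $q^*=m_\chi$ attached to $\f$ (Remark \ref{inducedform}) and uses the relation between old and new forms together with the operator $W$: the identity $T_pW=\chi(p)WT_p$ for $(p,q)$... — more precisely one must analyze how $T_p$ acts on the oldclass generated by $\f^*$ inside $S_\lambda(q,\chi)$, distinguishing the cases $p\nmid m_\chi$ (where $\f$ arises from level-raising by $p$ and the $T_p$-eigenvalue on the newform $\f$ is governed by the Atkin--Lehner involution $W_p$, forcing $\lambda_\f(p)=\pm p^{-1/2}$, hence $|\lambda_\f(p)|=p^{-1/2}$ when $p\parallel q$ and $\lambda_\f(p)=0$ when $p^2\mid q$), and $p\mid m_\chi$ (where $\f$ is already new at $p$, the local representation is a ramified principal series twist, and $|\lambda_\f(p)|=1$ forced by the $W$-eigenvalue equation $|\eta_\f|=1$ together with $\lambda_\f(p^k)=\lambda_\f(p)^k$ and the unitarity relation coming from $\inprod{T_nf}{g}=\inprod{f}{\overline{\chi(n)}T_ng}$, which fails for $p\mid q$ and must be replaced by the Atkin--Lehner theory). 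The bookkeeping over the exponent $k$ with $p^k\parallel q$ versus $p^k\parallel m_\chi$ is the delicate point; everything else is a direct comparison of Fourier coefficients. I expect to lean on \cite{Atkin:1970aa}, \cite{Li:1975aa}, and \cite[Ch.~4]{Miyake:2006aa} for the local analysis underlying case (\ref{fjerde}).
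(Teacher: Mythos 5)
Your proposal is correct and follows essentially the same route the paper takes: parts (\ref{forste})--(\ref{tredje}) drop out of the Hecke action on Fourier coefficients (comparing the $m=1$ coefficient, then the general $m$, with Hecke normalization), part (\ref{anden}) from the $V$-eigenvalue, and part (\ref{fjerde}) reduces to $\chi(p)=0$ giving $\lambda_\f(p^l)=\lambda_\f(p)^l$ plus the local Atkin--Lehner analysis, which the paper itself delegates to \cite[Theorem 4.6.17]{Miyake:2006aa} exactly as you do.
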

Here $p^k\parallel q$, means that $p^k$ devides $q$ but $p^{k+1}$ does not.

We remark that (\ref{fjerde}) is the analogue of \cite[Theorem 4.6.17]{Miyake:2006aa}. For a primitive form as above we put
\begin{equation}
  L(s,\f)=\sum_{n=1}^\infty\frac{\lambda_\f(n)}{n^s},\qquad \Re(s)>1.
\end{equation}
By the multiplicativity relations for the Hecke operators $L(s,\f)$ admits an Euler product expansion
\begin{equation}\label{euler}
  L(s,\f)=\prod_{p}(1-\lambda_\f(p)p^{-s}+\chi(p)p^{-2s})^{-1},\qquad \Re(s)>1.
\end{equation}
and is non-vanishing in this halfplane.
We define the completed $L$-function
\begin{align}
\begin{split}
\Lambda(s,\f)=\left(\frac{\sqrt{q}}{\pi}\right)^s\Gamma&\left(\frac{s+(s_\f-1/2)}{2}+\frac{1-\e_\f}{4}\right)\\ &\cdot\Gamma\left(\frac{s-(s_\f-1/2)}{2}+\frac{1-\e_\f}{4}\right) L(s,\f)
\end{split}
\end{align}
where $\lambda=s_\f(1-s_\f)$ is the Laplace eigenvalue of $\f$.
\begin{theorem}The completed $L$-function admits analytic continuation to an entire function , and it satisfies
  \begin{equation*}
    \Lambda(s,\f)=\omega_\f\overline{\Lambda(\overline{1-s},\f)}
  \end{equation*}
 where  $\omega_\f=\varepsilon_\f\overline{\eta_\f}$.
\end{theorem}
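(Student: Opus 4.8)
The plan is to identify $\Lambda(s,\f)$, up to an explicit harmless constant, with a Mellin transform of a suitable restriction of $\f$ to the positive imaginary axis, and then to read off both the analytic continuation and the functional equation from the exponential decay of $\f$ at the two cusps $\infty$ and $0$ together with the action of the Fricke involution $W$. Suppose first that $\f$ is \emph{even}, i.e.\ $\varepsilon_\f=1$, and set $\psi(y)=\f(iy)$. Inserting the Fourier expansion (\ref{fourier-expansion-open}) at $x=0$, using $\rho_\f(-n)=\varepsilon_\f\rho_\f(n)=\rho_\f(n)$ and $\rho_\f(n)=\lambda_\f(n)$ from Theorem \ref{coefficients-properties}, and integrating term by term — legitimate for $\Re(s)$ large by the trivial bound (\ref{trivial-bound-on-Fourier}) and the exponential decay of $K_{s_\f-1/2}$ — together with the classical identity $\int_0^\infty K_\nu(y)y^{w}\tfrac{dy}{y}=2^{w-2}\Gamma\bigl(\tfrac{w+\nu}{2}\bigr)\Gamma\bigl(\tfrac{w-\nu}{2}\bigr)$, one obtains
\begin{equation*}
  \Lambda(s,\f)=2\,q^{s/2}\int_0^\infty \psi(y)\,y^{s-3/2}\,dy ,
\end{equation*}
the $\Gamma$-factors matching precisely because $\tfrac{1-\varepsilon_\f}{4}=0$. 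If instead $\f$ is \emph{odd} ($\varepsilon_\f=-1$) then $\f(iy)\equiv 0$, so one takes $\psi(y)=\partial_x\f(x+iy)|_{x=0}$ instead; differentiating the exponential inserts an extra factor $2\pi i n$, which shifts the Dirichlet series back to $L(s,\f)$ and at the same time forces precisely the shift $\tfrac{1-\varepsilon_\f}{4}=\tfrac12$ in both $\Gamma$-factors, yielding $\Lambda(s,\f)=-i\,q^{s/2}\int_0^\infty\psi(y)\,y^{s-1/2}\,dy$. In both cases $\Lambda(s,\f)$ is an explicit constant times $q^{s/2}$ times a Mellin transform of $\psi$.

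For the analytic continuation, note that since $\f$ is a Maa\ss{} cusp form, $\f$ and all its $x$-derivatives decay exponentially as $y\to\infty$ by Proposition \ref{gooddecay} (or directly from (\ref{fourier-expansion-open})), so $\psi(y)\to 0$ exponentially as $y\to\infty$. The behaviour as $y\to 0^+$ is controlled by the Fricke involution: $W\f=\eta_\f\f$ with $|\eta_\f|=1$, which on the imaginary axis reads $\f(i/(qy))=\overline{\eta_\f}\,\overline{\f(iy)}$ (in the odd case an elementary chain-rule computation for the substitution $z\mapsto -1/(qz)$ gives $\psi(1/(qy))=\overline{\eta_\f}\,qy^{2}\,\overline{\psi(y)}$). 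Hence $\psi(y)$ also decays exponentially as $y\to 0^+$, so the Mellin integral above converges absolutely and locally uniformly for \emph{every} $s\in\C$; this is the asserted continuation of $\Lambda(s,\f)$ to an entire function.

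For the functional equation, split the Mellin integral at $y=q^{-1/2}$ and in the range $(0,q^{-1/2})$ substitute $y\mapsto 1/(qy)$; the change of variables together with the power of $y$ turn $q^{s/2}$ into $q^{(1-s)/2}$ and replace the exponent of $y$ by the one appropriate to $1-s$, while the Fricke identity turns $\psi(1/(qy))$ into $\overline{\eta_\f}\,\overline{\psi(y)}$ (times $qy^{2}$ in the odd case). Performing the symmetric manipulation on $\overline{\Lambda(\overline{1-s},\f)}$ and comparing — each of the two expressions is the same linear combination of the two tail integrals $\int_{q^{-1/2}}^\infty\psi(y)\,y^{s-3/2}\,dy$ and $\int_{q^{-1/2}}^\infty\overline{\psi(y)}\,y^{(1-s)-3/2}\,dy$ in the even case, and of their obvious analogues in the odd case — one reads off $\Lambda(s,\f)=\overline{\eta_\f}\,\overline{\Lambda(\overline{1-s},\f)}$ when $\f$ is even and $\Lambda(s,\f)=-\overline{\eta_\f}\,\overline{\Lambda(\overline{1-s},\f)}$ when $\f$ is odd, i.e.\ $\Lambda(s,\f)=\omega_\f\,\overline{\Lambda(\overline{1-s},\f)}$ with $\omega_\f=\varepsilon_\f\overline{\eta_\f}$; the extra sign in the odd case is exactly the factor $i$ produced by $\partial_x$ interacting with the complex conjugation coming from $W$.

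The main difficulty is bookkeeping rather than conceptual. In the odd case one must verify that the differentiated Mellin transform reproduces the prescribed $\tfrac{1-\varepsilon_\f}{4}$ shift in the $\Gamma$-factors and correctly tracks the sign coming from $\partial_x$ under the Fricke substitution; and throughout one must keep careful account of complex conjugations — on $\eta_\f$, on the index $s_\f-1/2$ of the $K$-Bessel function (which for our eigenvalues is either real or purely imaginary, so that $K_{s_\f-1/2}$ is real-valued on $\R_+$), and on the shifted argument $\overline{1-s}$ — so that the root number emerges as the precise unimodular constant $\varepsilon_\f\overline{\eta_\f}$ rather than merely as some element of the unit circle. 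One also has to justify the interchange of summation and integration on a half-plane and to invoke, as recalled above, that $W$ maps $S_\lambda(q,\chi)$ to itself and that a primitive form is a $W$-eigenvector with $|\eta_\f|=1$.
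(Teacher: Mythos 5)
The paper states this theorem without proof; it is quoted as a standard fact about primitive Maaß newforms (Hecke's argument in the Maaß setting, cf.\ the references given at the start of Section~\ref{number-theory}). Your argument is correct and is precisely the standard proof: express $\Lambda(s,\f)$ as (an explicit constant times) a Mellin transform of $\f(iy)$ in the even case, or of $\partial_x\f(x+iy)\big|_{x=0}$ in the odd case, where the extra factor $2\pi in$ and the raised exponent of $y$ exactly produce the shift $\tfrac{1-\varepsilon_\f}{4}=\tfrac12$ in the $\Gamma$-factors; analytic continuation then follows from the exponential decay of $\psi$ at $0$ and $\infty$ (the latter from Proposition~\ref{gooddecay}, the former from the Fricke relation $W\f=\eta_\f\f$), and the functional equation from splitting the Mellin integral at $y=q^{-1/2}$ and substituting $y\mapsto 1/(qy)$, with the sign $\varepsilon_\f$ emerging from the conjugation of the explicit prefactor ($2$ in the even case versus $-i$ in the odd case). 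I verified the exponents and constants and they check out, including the identity $\psi(1/(qy))=\overline{\eta_\f}\,qy^2\,\overline{\psi(y)}$ in the odd case.
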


\subsection{Character twists of primitive forms}
In this section we recall the notion of character twists of Maa\ss{} forms, and state a theorem due to Rohrlich concerning such twists. We only need character twists of primitive forms, so we shall only consider such although the twists of general forms are only slightly more complicated.

Consider a primitive $\f\in S_\lambda^\new(q,\chi)$, and let $\psi$ be a \emph{primitive} Dirichlet character modulo $r$. 
 Then we define 
\begin{equation}
  \label{twists}
\f \times \psi=  \sum_{m\neq 0}\psi(m)\rho_\f(m)\sqrt{y}K_{s-1/2}(2\pi \abs{m}y)e^{2\pi i mx}
\end{equation}
By considering Gauss sums we find that 
\begin{equation}\f \times \psi=\tau(\overline{\psi})^{-1}\sum_{a \bmod r}\f(z+a/r)\in S_\lambda(N,\chi\psi^2)\end{equation}
where $\tau(\psi)$ denotes the Gauss sum of $\f$ and $N$ is the least common multiple of $q$, $q^*r$, and $r^2$ (remember that $q^*$ is the modulus of $\chi$). It is easy to see that $\f \times \psi$ is a non-zero eigenfunction of $T_n$ when $(n,N)=1$.
We denote by $\f \otimes \psi$ the unique newform giving rise to $\f \times \psi$ (See Remark \ref{inducedform}). In particular
\begin{equation}
  \label{eq:1}
  \rho_{\f \otimes \psi}(m)=\psi(m)\rho_\f(m),\qquad\textrm{when }(N,m)=1.
\end{equation}

We note that the parities of $\f$ and $\psi$ multiply i.e. 
\begin{equation}\label{parity-consideration}
  \varepsilon_{\f\otimes\psi} =\varepsilon_{\f}\cdot\psi(-1)
\end{equation}
so a twist by an odd character changes the parity of $\f$ while  a twist by an even character keeps the parity.

We state a simplified version of a theorem due to Rohrlich \cite{Rohrlich:1989aa}:
\begin{theorem}\label{rohrlich}
Let $\f$ be a primitive Maa\ss{} cusp form,  $s$  any  complex number, and  $M$  any integer. Then there exist infinitely many even primitive Dirichlet characters $\psi$ of conductor $m_\psi$ such that $(M,m_\psi)=1$ and
\begin{equation*}
  L(s,\f \otimes \psi)\neq 0.
\end{equation*}
\end{theorem}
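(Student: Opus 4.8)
The plan is to deduce the nonvanishing statement of Theorem \ref{rohrlich} from the analytic behavior of the family of twisted $L$-functions $L(s,\f\otimes\psi)$ as $\psi$ ranges over even primitive characters of large prime conductor, following Rohrlich's original argument \cite{Rohrlich:1989aa} but only extracting the weak form we need. First I would fix the primitive Maa\ss{} form $\f$, the point $s$, and the integer $M$, and reduce to the case where the conductor $m_\psi=\ell$ is a large prime coprime to $M$; this is harmless since we are only claiming infinitely many $\psi$ exist. For such $\ell$ one parametrizes the even primitive characters mod $\ell$ (there are roughly $\ell/2$ of them) and studies the average of $L(s,\f\otimes\psi)$ — or better, of $L(s,\f\otimes\psi)\overline{L(s,\f\otimes\psi')}$ — over this family. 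The key input is the approximate functional equation for $L(s,\f\otimes\psi)$, which expresses it (up to a gamma-factor ratio of size $O(\ell^{O(1)})$ from the completed $L$-function $\Lambda(s,\f\otimes\psi)$ introduced above) as a sum $\sum_n \psi(n)\rho_\f(n)n^{-s}$ of length $O(\ell^{1+\e})$ plus a dual sum of comparable length.

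The main step is then an orthogonality/averaging argument. Summing $\psi(n)$ over the even primitive characters mod $\ell$ picks out $n\equiv \pm 1 \pmod \ell$, so the first-moment average $\frac{1}{\#}\sum_\psi L(s,\f\otimes\psi)$ has a main term coming from $n$ in a fixed residue class (essentially the $n=1$ term, contributing $\rho_\f(1)=1$ times the appropriate archimedean factor) plus an error term from the tail and from the dual sum, which by the trivial bound $\rho_\f(n)=O(\sqrt n)$ from (\ref{trivial-bound-on-Fourier}) and the length $O(\ell^{1+\e})$ is $o(1)$ once $\ell\to\infty$ — here one must be slightly careful about the gamma-factor normalization since $s$ is arbitrary, but the ratio is fixed and independent of $\psi$ so it only affects constants. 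Hence the average is bounded away from zero for $\ell$ large, which forces $L(s,\f\otimes\psi)\neq 0$ for at least one — in fact a positive proportion of — the even primitive $\psi$ mod $\ell$. Letting $\ell$ run over the infinitely many primes coprime to $M$ yields infinitely many admissible $\psi$, and since the twist preserves primitivity and the relevant $L$-functions only depend on $\psi$ through its values (see (\ref{eq:1})), the statement follows.

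The hard part will be the uniformity in $s$: the point $s$ is completely arbitrary, so the archimedean part of $\Lambda(s,\f\otimes\psi)$ can be large or the functional equation can move $s$ out of any convenient region, and one has to make sure the main term in the first moment does not itself vanish at the chosen $s$ because of the gamma factors. Rohrlich handles this by working instead with a carefully chosen second moment (or a first moment twisted by a harmless Dirichlet polynomial), so that positivity of $\sum_\psi \abs{L(s,\f\otimes\psi)}^2$-type expressions — which cannot vanish for trivial reasons — does the job; I would follow that route. A secondary technical point is controlling the analytic continuation and the precise shape of the functional equation for $\f\otimes\psi$ when the conductor arithmetic is complicated, but since we took $\ell$ prime and coprime to $q$ and to $m_\chi$, the conductor of $\f\otimes\psi$ is simply $q\ell^2$ and the functional equation is clean, so this reduces to the standard theory already summarized in Section \ref{Dirichlet-setup}. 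Beyond invoking these ingredients I would simply cite \cite{Rohrlich:1989aa} for the details, since only the qualitative conclusion is needed in the sequel.
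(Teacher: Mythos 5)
The paper gives no proof of this theorem: it states it as ``a simplified version of a theorem due to Rohrlich'' and simply cites \cite{Rohrlich:1989aa}, which is exactly where your proposal ultimately lands. Your sketch of the first/second-moment mechanism (orthogonality over even primitive characters of large prime conductor $\ell$ coprime to $M$, approximate functional equation, conductor $q\ell^2$ for the twist) is a reasonable outline of Rohrlich's argument, and you correctly flag that the naive first moment with the trivial bound $\rho_\f(n)=O(\sqrt{n})$ does not handle arbitrary $s$ — which is precisely why the details are best left to the cited reference, as the paper itself does.
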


\subsection{Modular forms of weight 2}\label{formsconstructed}
In this section we construct an element in $M_2^\infty(\G_0(q))$ which will be used to do character perturbation as in Section \ref{erc-application-is-over}.

Consider the holomorphic Eisenstein series of weight 2 for the 
modular group $\sl$, which may be defined as 
\begin{equation*}
E_2(z)=1-24\sum_{n=1}^\infty \sigma_1(n) e^{2\pi i n z},
\end{equation*}
where 
\begin{equation*}
\sigma_s(n)=\sum_{d\mid n}d^s.
\end{equation*}
The holomorphic Eisenstein series of weight 2 is only \lq quasi\rq-modular. More precisely it satisfies
\begin{equation*}
E_2(\g z)= (cz+d)^2E_2(z)-\frac{6i}{\pi}c(cz+d),\qquad\textrm{ for all }\g\in \sl.
\end{equation*}
But for any $q\in\N$ the following difference is modular and non-zero:
\begin{equation*}\label{eisensteinmodform}
  G_{q}(z):=E_2(z)-qE_2(qz)\in M_2(\G_0(q)).
\end{equation*}
The zero Fourier coefficent of  $G_{2,q}$ equals $1-q$. To construct a non-zero element in $M^\infty(\G_0(q))$ we assume that $q>1$ is not a prime and let $q=q_1q_1$ be a nontrivial factorization. We then define
\begin{equation*}
  G_{q_1,q_2}(z):=G_{q_1}(z)-G_{q_1}(q_2z)\in M_2^{\infty}(\G_0(q))
\end{equation*}
We note that this is nothing but a particular element in $\mathcal{E}(\G_0(q))$ the space of Eisenstein series of weight 2 for $\G_0(q)$ (See \cite[p. 21]{Sarnak:1990fk}, \cite{Scholl:1986aa}). The first Fourier coefficient of  $G_{q_1,q_2}(z)$ equals -24.


\subsection{The Phillips-Sarnak integral}
We already considered the integral
\begin{equation}\label{general-integral}
 I(s) =\int_{\F_\G}L_\e\f(z)\overline{E_\infty(z,\overline{s},\chi)}d\mu(z).
\end{equation}
from one point of view (Theorem \ref{integraliszero}). We now consider it from a number theory point of view. 
We start by taking a general eigenfunction $\f$ (i.e. not necessarily primitive), which is an eigenfunction of $V$, i.e. it is either even or odd. We assume that $f\in M_2(\Gamma_0(q))$ inducing $\chi_\e$ has Fourier coefficients at infinity
\begin{equation*}
 f(z)=\sum_{n=0}^\infty b_ne^{2\pi i n z} 
\end{equation*}
We note that $I(s)$ is well-defined even if $f$ is not cuspidal at infinity. Note however that without cuspidality at infinity we do not know if the conclusion of Theorem \ref{integraliszero} is valid.
\begin{theorem}\label{etskridt}Assume that $\f$ is either even or odd. If $\f$ is even then $I(s)=0$. If $\f$ is odd then 
  \begin{equation}
    \label{eq:6}
    I(s)=\frac{2}{2^{2s}\pi^{s-1}}\frac{\G(s+s_j)\G(s-s_j+1)}{\G(s)}
L(s+1/2,f\times \f) 
  \end{equation}
  where 
  \begin{equation*}
    L(s,f\times \f)=\sum_{n=1}^\infty\frac{b_n\rho_{\f}(n)}{n^{s}}
  \end{equation*}
is the Rankin-Selberg $L$-function of $f$ and $\f$.
\end{theorem}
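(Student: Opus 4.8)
\textbf{Proof plan for Theorem \ref{etskridt}.}

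The plan is to compute $I(s)$ by the standard unfolding trick, exploiting the defining sum for $E_\infty(z,s,\chi)$ over $\G_\infty\backslash\G$. First I would rewrite the integrand $L_\e\f(z) = 4\pi i\,y^2\bigl(f(z)\partial_{\bar z}\f + \overline{f(z)}\partial_z\f\bigr)$ (this is the expression behind (\ref{leavingsoon}), (\ref{PS})) and observe that since $\Re(\sigma_\infty^{-1}z)=\Re z$ (the cusp at infinity is the standard one for $\G_0(q)$) unfolding turns $\int_{\F_\G}(\cdots)\overline{E_\infty(z,\bar s,\chi)}\,d\mu(z)$ into $\int_0^\infty\int_0^1 L_\e\f(z)\,y^{s}\,\frac{dx\,dy}{y^2}$, up to the elementary issue that $E_\infty$ is built with $\overline{\chi(\gamma)}$ so the conjugation produces the untwisted weight $y^s$ against the character-carrying $L_\e\f$; one must check that the $\chi_\e$-automorphy of $L_\e\f$ is exactly what makes the unfolded integrand well-defined on $\G_\infty\backslash\H$, i.e. that the relevant zero Fourier coefficient against $e^{2\pi i n x}$ is what survives. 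Doing the $x$-integral then selects, from the product of the Fourier series of $f$ (coefficients $b_n$, $n\ge 0$) and of $\f$ (coefficients $\rho_\f(m)$, $m\neq 0$, with $\rho_\f(-m)=\varepsilon_\f\rho_\f(m)$ by Theorem \ref{coefficients-properties}(\ref{anden})), the diagonal terms; this is where the Rankin--Selberg convolution $L(s+1/2,f\times\f)=\sum b_n\rho_\f(n)n^{-s-1/2}$ emerges. The parity statement drops out here: when $\f$ is even, $\rho_\f(-m)=\rho_\f(m)$, and the two pieces $f\,\partial_{\bar z}\f$ and $\overline f\,\partial_z\f$ in $L_\e\f$ cancel after the $x$-integration (the $\partial_z$ versus $\partial_{\bar z}$ introduces a sign $\pm 2\pi i m$ that is odd in $m$), so $I(s)\equiv 0$; when $\f$ is odd the two pieces reinforce.

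The key steps, in order, are: (i) justify absolute convergence and the unfolding for $\Re(s)$ large, using the exponential decay of $f$ and $\f$ at infinity (Proposition \ref{gooddecay}, and $f\in M_2^\infty$ when we need cuspidality, though here only decay of $\f$ is essential for convergence); (ii) insert the Fourier expansions and carry out $\int_0^1 e^{2\pi i(n\pm m)x}\,dx=\delta_{n,\mp m}$, reducing to a single sum over $n\ge 1$ with the $y^2\partial_y$-type derivatives of $\sqrt y K_{s_j-1/2}(2\pi|m|y)$ producing $K_{s_j-1/2}$ and $K_{s_j+1/2}$ (or their sum/difference); (iii) evaluate the resulting $y$-integral, which is a Mellin transform of a $K$-Bessel function against $y^{s-1}$ (after the $d\mu=dx\,dy/y^2$ and the $y^2$ from the operator combine with the $y^s$), giving a ratio of Gamma functions by the classical formula $\int_0^\infty K_\nu(y) y^{s-1}\,dy = 2^{s-2}\Gamma(\tfrac{s+\nu}2)\Gamma(\tfrac{s-\nu}2)$; (iv) collect the $n$-sum into $L(s+1/2,f\times\f)$ and bookkeep the powers of $2$, $\pi$ and the factor $4\pi i$ to arrive at the stated constant $\frac{2}{2^{2s}\pi^{s-1}}\cdot\frac{\Gamma(s+s_j)\Gamma(s-s_j+1)}{\Gamma(s)}$; (v) remark that both sides are meromorphic in $s$ so the identity, proved for $\Re(s)$ large, persists by analytic continuation to the point $s=s_0$ we care about.

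The main obstacle I anticipate is step (iii) together with the careful handling of the two terms of $L_\e\f$ in step (ii): one must differentiate the Whittaker/Bessel factor $\sqrt y\,K_{s_j-1/2}(2\pi|m|y)$ with respect to $z$ and $\bar z$ (equivalently, combine $\partial_y$ and $\partial_x$), keep track of the sign of $m$ (which is where parity enters and where the even case collapses), and then recognize that the precise combination of $K$-Bessel functions that appears integrates, against $y^{s-1}$, to exactly $\Gamma(s+s_j)\Gamma(s-s_j+1)/\Gamma(s)$ after using the contiguous-function/duplication identities for the Gamma function. This is a finite but delicate computation; the conceptual content --- unfolding plus a Bessel Mellin transform --- is routine, so the real work is purely in pinning down the constant and the argument of the second Gamma factor ($s-s_j+1$ rather than $s-s_j$), which reflects the weight-$2$ nature of $f$ shifting the Bessel index. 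I would also note that the convergence of $L(s,f\times\f)$ in a right half-plane follows from the trivial bounds $b_n=O(n)$ and $\rho_\f(n)=O(\sqrt n)$ (cf. (\ref{trivial-bound-on-Fourier})), so the final expression is genuinely valid where claimed and the continuation in (v) is legitimate.
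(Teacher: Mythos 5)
Your plan correctly identifies unfolding as the core mechanism and the parity mechanism via $\rho_\f(-n)=\varepsilon_\f\rho_\f(n)$, and the overall bookkeeping is sensible, but there are two substantive issues worth flagging.

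First, the route you choose for handling $L_\e\f = 4\pi i\,y^2\bigl(f\,\partial_{\bar z}\f+\bar f\,\partial_z\f\bigr)$ — expand $\f$ into its Bessel Fourier series and differentiate the Whittaker factor $\sqrt y\,K_{s_j-1/2}(2\pi|m|y)$, then appeal to contiguous/duplication identities — is not what the paper does, and it is substantially harder than necessary. The paper first integrates by parts \emph{before} introducing any Fourier expansion: since $\partial_{\bar z}f=0$ and $\partial_z\bar f=0$ (holomorphicity/anti-holomorphicity), and $\f$ decays rapidly, one may move the $\partial_{\bar z}$ (resp.\ $\partial_z$) off $\f$ and onto $y^s$, using $\partial_{\bar z}y^s = \tfrac{i}{2}s y^{s-1}$ and $\partial_z y^s = -\tfrac{i}{2}s y^{s-1}$. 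This collapses the integrand to a multiple of $s\,(f-\bar f)\,\f\,y^{s-1}$, with no Bessel derivatives at all, and the parity cancellation/reinforcement then appears cleanly in the coefficient $\rho_\f(n)-\rho_\f(-n)$ of the resulting Dirichlet sum. Your route is not wrong in principle, but you pay for it with a much messier step (ii)--(iii), and indeed this is where the second problem arises.

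Second, and more seriously: the $y$-integral that actually occurs is \emph{not} the bare Mellin transform $\int_0^\infty K_\nu(y)y^{s-1}\,dy = 2^{s-2}\Gamma\!\left(\tfrac{s+\nu}{2}\right)\Gamma\!\left(\tfrac{s-\nu}{2}\right)$ that you cite. The holomorphic form $f$ contributes the exponential $e^{-2\pi n y}$ from $e^{2\pi i n z}$ to each diagonal term, so the relevant integral is the Laplace--Mellin transform
\begin{equation*}
\int_0^\infty e^{-y}K_{s_j-1/2}(y)\,y^{s-1/2}\,dy
=\frac{\sqrt\pi}{2^{s+1/2}}\,\frac{\Gamma(s+s_j)\,\Gamma(s-s_j+1)}{\Gamma(s+1)},
\end{equation*}
(Gradshteyn--Ryzhik 6.621.3 with $\alpha=\beta=1$), which is exactly what produces the gamma quotient $\Gamma(s+s_j)\Gamma(s-s_j+1)/\Gamma(s)$ after noting $s/\Gamma(s+1)=1/\Gamma(s)$. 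The bare Mellin transform of $K_\nu$ (with no $e^{-y}$) that you wrote would give gamma factors with half-arguments $\Gamma\!\left(\tfrac{s\pm(s_j-1/2)}{2}\right)$ which, even after Legendre duplication, do not match the stated answer; more to the point, omitting the exponential from $f$ makes the evaluation simply incorrect rather than a bookkeeping discrepancy. If you follow the paper's integration-by-parts route, this Laplace--Mellin integral appears automatically and there is no need for contiguous-Bessel identities or duplication gymnastics.

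One small additional remark: you worry that you need $f\in M_2^\infty$ (cuspidality of $f$ at $\infty$) for the unfolding, but actually only the cuspidality of $\f$ is needed there (the potential $b_0\int_0^1\f\,dx$ term vanishes because $\f$ is a cusp form); the paper notes this explicitly just before the theorem. Your step (v) on analytic continuation is correct and matches the paper's implicit use.
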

\begin{proof}
  
For  $\Re(s)>1$ we may unfold the integral
\begin{align*}
   I(s)=&\int_{\F_\infty}L_\e\f(z)y^sd\mu(z)\\
=&4\pi i\int_{\F_\infty}y^2\left(f(z)\frac{\partial \f(z)}{\partial \overline z}+\overline{f(z)}\frac{\partial \f(z)}{\partial z}\right)y^sd\mu(z), \qquad \textrm{by (\ref{leavingsoon})}\\
=&2\pi i \int_{\F_\infty}y^2\left(f(z) \f(z)-\overline{f(z)}\f(z)\right)(-siy^{s-1})d\mu(z)\\
\intertext{We now use the Fourier expansions of $f$ and $\f$ and find}
=&\frac{s}{(2\pi)^{s-1/2} }\int_0^\infty e^{-y}K_{s-1/2}(y)y^{s-1/2}dy\sum_{n=1}^\infty\frac{b_n(\rho_{\f}(n)-\rho_\f(-n))}{n^{s+1/2}}    
\intertext{The integral can be evaluated (See \cite[6.621 3]{Gradshteyn:2000aa}) giving}
=&\frac{ s}{(2\pi)^{s-1/2} }\frac{\sqrt{\pi}}{2^{s+1/2}}\frac{\G(s+s_j)(\G(s-s_j+1))}{\G(s+1)}
\sum_{n=1}^\infty\frac{b_n(\rho_{\f}(n)-\rho_\f(-n))}{n^{s+1/2}}. 
\end{align*} 
We may now use Theorem \ref{coefficients-properties}  \ref{anden}) (which holds even though $\f$ is not primitive) to conclude that this is identically zero if $\varepsilon_\f=1$, i.e. if $\f$ is even.  This can also be seen directly from (\ref{general-integral}). In the odd case the claimed expression follows easily. 
\end{proof}
We will now show that if we further assume that  $\f$ is a primitive Hecke-normalized form of weight 0, eigenvalue $\lambda$, level $q$ and nebentypus $\chi$ and that the $f\in M_2(\G_0(q))$ giving rise to $L_\e$ (See \ref{perturbed}) equals $f(z)=G_{q_1,q_2}$ as defined in Section \ref{eisensteinmodform} then we can deduce the following theorem:
\begin{theorem} \label{integral-calculation}  On the above assumptions: If $\f$ is odd then 
  \begin{equation*}
  I(s)=-24\left(1-\frac{\lambda_\f(q_1)}{q_1^{s-1/2}}\right) \left(1-\frac{\lambda_\f(q_2)}{q_2^{s+1/2}}\right)\frac{\Lambda(s-1/2,\f)\Lambda(s+1/2,\f)}{\Lambda(2s,\chi)}.
\end{equation*}
Otherwise $I(s)=0$.
\end{theorem}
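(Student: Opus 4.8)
The plan is to feed Theorem~\ref{etskridt} (the odd case) the explicit weight-$2$ form $f=G_{q_1,q_2}$ and evaluate the resulting Rankin--Selberg Dirichlet series in closed form, exploiting that the Fourier coefficients of weight-$2$ Eisenstein series are divisor sums; the even case is immediate from Theorem~\ref{etskridt}. Write $s_\f$ for the spectral parameter (the $s_j$ of Theorem~\ref{etskridt}), so $\lambda=s_\f(1-s_\f)$, and work throughout with $\Re(s)$ large, where the unfolding of Theorem~\ref{etskridt} is valid, extending by meromorphic continuation at the end. By that theorem and $\rho_\f(n)=\lambda_\f(n)$ (Theorem~\ref{coefficients-properties}(i)),
\[
I(s)=\frac{2}{2^{2s}\pi^{s-1}}\,\frac{\Gamma(s+s_\f)\Gamma(s-s_\f+1)}{\Gamma(s)}\sum_{n\ge 1}\frac{b_n\lambda_\f(n)}{n^{s+1/2}},
\]
where $b_n$ is the $n$-th Fourier coefficient of $f=G_{q_1,q_2}$. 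From $f(z)=G_{q_1}(z)-G_{q_1}(q_2z)$, $G_{q_1}(z)=E_2(z)-q_1E_2(q_1z)$, and $E_2(z)=1-24\sum_{n\ge1}\sigma_1(n)e^{2\pi inz}$ one reads off $b_0=0$ and, for $n\ge1$, $b_n=-24\bigl(\sigma_1(n)-q_1\sigma_1(n/q_1)-\sigma_1(n/q_2)+q_1\sigma_1(n/(q_1q_2))\bigr)$, terms with non-integral argument being $0$.

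\emph{Peeling off the two bad Euler factors.} Since $q_1\mid q$ and $q_2\mid q$, Theorem~\ref{coefficients-properties}(iv) gives $\lambda_\f(p^l)=\lambda_\f(p)^l$ for every $p\mid q$; hence $\lambda_\f$ is completely multiplicative on the part supported at primes dividing $q$, and $\lambda_\f(q_im)=\lambda_\f(q_i)\lambda_\f(m)$ for \emph{all} $m\ge1$, $i=1,2$. Writing the Dirichlet series of $b_n$ as a combination of four shifted divisor series, substituting $n\mapsto q_2m$ and then $n\mapsto q_1m$, and collecting terms yields, in the region of absolute convergence,
\[
\sum_{n\ge1}\frac{b_n\lambda_\f(n)}{n^{w}}=-24\Bigl(1-\lambda_\f(q_1)q_1^{1-w}\Bigr)\Bigl(1-\lambda_\f(q_2)q_2^{-w}\Bigr)\sum_{n\ge1}\frac{\sigma_1(n)\lambda_\f(n)}{n^{w}};
\]
with $w=s+1/2$ the two prefactors become exactly $\bigl(1-\lambda_\f(q_1)q_1^{-(s-1/2)}\bigr)\bigl(1-\lambda_\f(q_2)q_2^{-(s+1/2)}\bigr)$.

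\emph{The Ramanujan identity and completion.} The remaining sum $D(w)=\sum_{n\ge1}\sigma_1(n)\lambda_\f(n)n^{-w}$ is computed Euler factor by Euler factor. At $p\nmid q$, with Satake parameters $\alpha+\beta=\lambda_\f(p)$, $\alpha\beta=\chi(p)$, the elementary generating-function identity
\[
\sum_{k\ge0}\sigma_1(p^k)\lambda_\f(p^k)x^k=\frac{1-\chi(p)p\,x^2}{(1-\alpha x)(1-\beta x)(1-p\alpha x)(1-p\beta x)}
\]
with $x=p^{-w}$ shows the local factor equals $L_p(w,\f)L_p(w-1,\f)/L_p(2w-1,\chi)$; at $p\mid q$, where $\lambda_\f(p^k)=\lambda_\f(p)^k$ and $\chi(p)=0$, a direct summation gives the same identity with the $\chi$-factor trivial. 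Hence $D(w)=L(w,\f)L(w-1,\f)/L(2w-1,\chi)$, and combining with the previous step (at $w=s+1/2$, so $L(w-1,\f)=L(s-1/2,\f)$, $L(2w-1,\chi)=L(2s,\chi)$),
\[
I(s)=-24\,\frac{2}{2^{2s}\pi^{s-1}}\,\frac{\Gamma(s+s_\f)\Gamma(s-s_\f+1)}{\Gamma(s)}\Bigl(1-\tfrac{\lambda_\f(q_1)}{q_1^{s-1/2}}\Bigr)\Bigl(1-\tfrac{\lambda_\f(q_2)}{q_2^{s+1/2}}\Bigr)\frac{L(s+1/2,\f)L(s-1/2,\f)}{L(2s,\chi)}.
\]
It remains to check that the archimedean prefactor converts the incomplete $L$-functions into the completed ones: expanding $\Lambda(\cdot,\f)$ (with $\f$ odd, so the term $\tfrac{1-\e_\f}{4}$ in its definition equals $\tfrac12$) and the standard completed even Dirichlet $L$-function, the four $\f$-gamma factors pair off via Legendre duplication $\Gamma(z)\Gamma(z+\tfrac12)=2^{1-2z}\sqrt\pi\,\Gamma(2z)$ into $2^{1-2s}\pi\,\Gamma(s+s_\f)\Gamma(s-s_\f+1)$, and the powers of $\sqrt q/\pi$ together with $\Gamma(s)^{-1}$ leave precisely $\frac{2}{2^{2s}\pi^{s-1}}$, which gives the stated formula; the even case is $I(s)=0$ directly by Theorem~\ref{etskridt}.

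\emph{Main obstacle.} The only genuinely delicate point is the bookkeeping at the primes dividing $q$: one must invoke exactly Theorem~\ref{coefficients-properties}(iv) so that the two ``peeling'' factors come out as the clean binomials $1-\lambda_\f(q_i)q_i^{\ast}$ rather than something messier, and one must separately verify that the Ramanujan-type factorisation of $D(w)$ survives at these bad primes, where the local $L$-factors of $\f$ and of $\chi$ degenerate. The concluding identification of archimedean factors is routine but has to be carried through carefully to land on the exact constant $-24$ and to match the conductor normalisation implicit in $\Lambda(2s,\chi)$.
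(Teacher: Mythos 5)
Your proof is correct and follows essentially the same route as the paper's: unfold via Theorem~\ref{etskridt}, use the complete multiplicativity of $\lambda_\f$ at divisors of $q$ (Theorem~\ref{coefficients-properties}(iv)) to peel off the two binomial factors, evaluate $\sum\sigma_1(n)\lambda_\f(n)n^{-w}$ by a Ramanujan-type Euler-product identity, and match archimedean factors via Legendre duplication. The only cosmetic difference is that you expand $b_n$ explicitly as four shifted divisor sums and verify the bad-prime local factors separately, whereas the paper works with the generic combination $\sum_{d\mid q}t_d E_2(dz)$ and states the Euler-factor check without cases; the substance is identical.
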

Here
\begin{equation}
  \Lambda(s,\chi)=\left(\frac{q}{\pi}\right)^{s/2}\G(s/2)\sum_{n=1}^\infty\frac{\chi(n)}{n^s}, \qquad \Re(s)>1
\end{equation} is the completed $L$-function of the even Dirichlet character $\chi$. We note that $\Lambda(s,\chi)$ only satisfies a standard functional equation if $\chi$ is primitive which we do not assume.

\begin{proof}
 We note that $f\in M_2(\G_0(q))$ giving $L_\e$  is of the form
\begin{equation}\label{form-assumption}
  f(z)=\sum_{d\mid q}t_dE_2(dz) ,\qquad t_d\in \R,
\end{equation}
 We may assume that $\f$ is odd. By Theorem \ref{etskridt} we need to consider the Rankin-Selberg $L$-function 
\begin{equation}\label{fundamentalsum}
  \sum_{n=1}^\infty\frac{b_n\lambda_{\f}(n)}{n^s} 
\end{equation}
which by the assumption (\ref{form-assumption}) equals
\begin{equation*}
- \sum_{d\mid q}t_d\sum_{n=1}^\infty \frac{24\sigma_1(n)\lambda_{\f}(dn)}{(dn)^s} 
\end{equation*}
From Theorem \ref{coefficients-properties} we conclude that $\lambda_\f(dn)=\lambda_\f(d)\lambda_\f(n)$ whenever $d\mid q$. Therefore (\ref{fundamentalsum}) equals 
\begin{equation*}
 -24 \sum_{d\mid q}\frac{t_d\lambda_\f(d)}{d^s} \sum_{n=1}^\infty \frac{\sigma_1(n)\lambda_{\f}(n)}{n^s}
\end{equation*}
The first sum equals
\begin{equation*}
  (1-\lambda_\f(q_1)q_1^{-s+1}) (1-\lambda_\f(q_2)q_2^{-s})
\end{equation*}
where we have used $\lambda_\f(q_1q_2)=\lambda_\f(q_1)\lambda_\f(q_2)$.
The last sum 
\begin{equation*}
  \sum_{n=1}^\infty \frac{\sigma_1(n)\lambda_{\f}(n)}{n^s}
\end{equation*}
may be calculated as follows: The sum factors into an Euler product
\begin{equation*}
  \prod_{p}\sum_{n=0}^\infty\frac{\sigma_1(p^n)\lambda_\f(p^n)}{p^{ns}}
\end{equation*}

It is straightforward to check - using Theorem \ref{coefficients-properties} (\ref{tredje}) --that each local factor equals
\begin{equation*}
  \frac{(1-\chi(p)p^{-(2s-1)})}{(1-\lambda_\f(p)p^{-(s-1)}+\chi(p)p^{-2(s-1)})(1-\lambda_\f(p)p^{-s}+\chi(p)p^{-2s})}
\end{equation*}
Therefore (compare \ref{euler}) we find that 
\begin{equation*}
  \sum_{n=1}^\infty \frac{\sigma_1(n)\lambda_{\f}(n)}{n^s}=\frac{L(s-1,\f)L(s,\f)}{L(2s-1,\chi)}
\end{equation*}
The result follows by comparing $\Gamma$ factors using the Legendre duplication formula.
\end{proof}

\section{Proof of Theorem \ref{highlow}}\label{cannot}
In this section we prove Theorem \ref{selbergconjecture}. 
We start by noticing that  Selberg's Conjecture \ref{selbergconjecture} implies Conjecture \ref{myconjecture}. This follows from the continuity of eigenvalues under character perturbation, or more precisely Corollary \ref{partytime}. For every $\lambda$, $a\leq \lambda\leq b$ Corollary \ref{partytime} gives that for small enough $\e$ there are no eigenvalues for the perturbed system in a neighborhood of $\lambda$. In particular there are no residual eigenvalues. Using compactness of $[a,b]$ we find that there is an $\e_0>0$ such that when $\abs{\e}\leq \e_0$ there are no residual eigenvalue for $A(\G,\e)$ in the whole interval $[a,b]$.

  Proving that Conjecture \ref{myconjecture} implies Conjecture \ref{selbergconjecture} is more involved. We notice that it is enough to prove non-existence of primitive Hecke-Maa\ss{} forms $\psi\in S^\new_\lambda(q,\chi)$ where $\lambda<1/4$, and $\chi$ is a Dirichlet character mod $N$. To see this we let $\G$ be any congruence group i.e. $\G(N)\subseteq \G\subseteq \G(1)$ for some $N$. If $\f\in S_\lambda(\G)$ then $\f(Nz)\in  S_\lambda(\G_1(N^2))$ (See \cite[p. 114]{Miyake:2006aa}). Then we use that for every positive integer $M$ 
$$ S_\lambda(\G_1(M))=\bigoplus_{\chi}  S_\lambda(\G_0(M),\chi)$$
where the sum is over all Dirichlet characters modulo $M$ (See  \cite[Lemma 4.3.1]{Miyake:2006aa}). Hence an eigenfunction for a congruence group induces at least one non-trivial primitive Hecke-Maa\ss{} form of some level and nebentypus with the same eigenvalue. We now prove that no primitive Maa\ss{} cusp forms exist with $0<\lambda<1/4$.

Assume that $\f$ is a non-trivial primitive Hecke-Maa\ss{} form of level $q$ and nebentypus $\chi$ with eigenvalue $s_0(1-s_0)<1/4$. By possibly twisting with an odd primitive character we may assume that $\f$ is odd (See \ref{parity-consideration}) and of level $q>1$ not a prime (See e.g. \cite[Proposition 14.20]{Iwaniec:2004aa}). By Theorem \ref{rohrlich} there exist an \emph{even} primitive Dirichlet character $\psi$ mod $r$ such that 
\begin{equation}\label{hereIcome}
  \Lambda(s_0-1/2,\f\otimes \psi )\neq 0.
\end{equation}
We note that by  (\ref{parity-consideration}) $\f\otimes \psi$ is still odd, and we denote its nebentypus by $\chi'$ and its level by $q'$. 

Since $q'>1$ is not a prime either we may choose a nontrivial factorization $q'=q_1q_2$, and we may form $f=G_{q_1,q_2}\in M_2^\infty(\G_0(q'))$  It follows  from Theorem \ref{coefficients-properties} \ref{fjerde}) that
\begin{equation*}
  \left(1-\frac{\lambda_{\f\otimes \psi}(q_1)}{q_1^{s-1/2}}\right) \left(1-\frac{\lambda_{\f\otimes\psi}(q_2)}{q_2^{s+1/2}}\right)\neq 0.
\end{equation*}
We have also 
\begin{equation*}
  \frac{\Lambda(s_0+1/2,\f\otimes \psi )}{\Lambda(2s_0,\chi')}\neq 0
\end{equation*}
since both $L$-functions are evaluated in the domain of absolute convergence. By Theorem \ref{integral-calculation}   we conclude that 
\begin{equation}\label{nonvanish}
  I(s_0)\neq 0.
\end{equation}
On the other hand on Conjecture \ref{myconjecture}  Theorem \ref{integraliszero}  tells us that  
\begin{equation*}
  I(s_0)= 0,
\end{equation*}
which contradicts (\ref{nonvanish}). Therefore the form $\f$ cannot
exist proving Conjecture \ref{selbergconjecture}. This concludes the
proof of Theorem \ref{highlow}.

\section{Further remarks}\label{further}
In this concluding section we make a few relatively straightforward remarks concerning the proof of Theorem \ref{highlow} which gives two more equivalent forms of Selberg's Conjecture \ref{selbergconjecture}. 
\subsection{Residual eigenvalues from infinity}
In our proof Theorem \ref{highlow}: To prove that Selberg's conjecture \ref{selbergconjecture} is implied by Conjecture \ref{myconjecture} we are only using non-existence of poles of Eisenstein series \emph{at infinity} and only character perturbations coming from $G_{q_1,q_2}$. Hence Selberg's Conjecture \ref{selbergconjecture} is also equivalent to the following: 
\begin{conjecture}\label{dumbass}
   For every $q>1$ non-prime and every Dirichlet character $\chi$ mod q there exist a non-trivial factorization $q=q_1q_2$ such that the following holds:  For every $1/2<c\leq d<1$ there exist $\e_0>0$ such that when $\abs{\e}<\e_0$ the Eisenstein series  $E_\infty(z,s,\e)$ is regular for $s\in [c,d]$. Here the character pertubation is given by (\ref{eq:2}) with $f=G_{q_1,q_2}$.
\end{conjecture}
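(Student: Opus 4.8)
The plan is to deduce Conjecture~\ref{dumbass} from Selberg's Conjecture~\ref{selbergconjecture}; this is the substantive content that needs establishing. The reverse implication --- that Conjecture~\ref{dumbass} forces $\lambda_1\geq 1/4$ --- is obtained by re-running the argument of Section~\ref{cannot} verbatim, the very point of that section being that it never used anything beyond poles of Eisenstein series \emph{at infinity} and character deformations coming from $G_{q_1,q_2}$, which are exactly the hypotheses packaged into Conjecture~\ref{dumbass}; so I shall concentrate on the forward direction, which mirrors the deduction of Conjecture~\ref{myconjecture} at the start of Section~\ref{cannot}, the two extra restrictions of Conjecture~\ref{dumbass} being harmless there.

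First I would fix $q>1$ non-prime and a Dirichlet character $\chi$ modulo $q$, and choose \emph{any} nontrivial factorization $q=q_1q_2$ (the quantifier ``there exists a factorization'' is satisfied a fortiori). Put $f=G_{q_1,q_2}\in M_2^\infty(\Gamma_0(q))$ as in Section~\ref{formsconstructed}, and let $\chi_\epsilon$, $L(\epsilon)$ be the associated character deformation and perturbed Laplacian of Section~\ref{erc-application-is-over}. Fix a closed segment $[c,d]\subset\,]1/2,1[$ and set $K=\{s(1-s)\mid s\in[c,d]\}$, a compact subset of $\,]0,1/4[$. Since $f$ is cuspidal at infinity, the cusp $\infty$ stays open along the whole deformation, so $E_\infty(z,s,\epsilon)$ is defined for every $\epsilon$.

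The key input is continuity of the small spectrum. Since $\chi_0$ is a Dirichlet character on $\Gamma_0(q)$, Selberg's Conjecture~\ref{selbergconjecture} gives $\lambda_1\geq 1/4$ for $A(\Gamma_0(q),\chi_0)=L(0)$; combined with the classical fact that there are no residual eigenvalues in $\,]0,1/4[$ when $\epsilon=0$, this shows $L(0)$ has no eigenvalue in $\,]0,1/4[$, so no $\lambda_0\in K$ is an eigenvalue of $L(0)$. By Corollary~\ref{partytime} each $\lambda_0\in K$ has a neighbourhood $N_{\lambda_0}$ and an $\epsilon_{\lambda_0}>0$ with $L(\epsilon)$ free of eigenvalues in $N_{\lambda_0}$ whenever $|\epsilon|\leq\epsilon_{\lambda_0}$; a finite subcover of $K$ and the minimum of the corresponding $\epsilon_{\lambda_0}$ produce $\epsilon_0>0$ such that for $|\epsilon|\leq\epsilon_0$ the operator $L(\epsilon)$ --- equivalently $A(\Gamma_0(q),\chi_\epsilon)$ --- has no eigenvalue in $K$. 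It remains to translate this into regularity: a pole of $E_\infty(z,s,\epsilon)$ at $s_0$ with $\Re(s_0)>1/2$ is simple and its residue is a nonzero residual eigenfunction of eigenvalue $s_0(1-s_0)$, so a pole at some $s_0\in[c,d]$ would put $s_0(1-s_0)\in K$ into the spectrum of $A(\Gamma_0(q),\chi_\epsilon)$, which we have just excluded. Hence $E_\infty(z,s,\epsilon)$ is regular on $[c,d]$ for $|\epsilon|\leq\epsilon_0$, which is precisely the assertion of Conjecture~\ref{dumbass}. The only structural fact used here --- poles in $\Re(s)>1/2$ are real, simple, with eigenfunction residues --- is recalled in Section~\ref{basics} and holds for any unitary character $\chi_\epsilon$.

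The step I expect to be the only genuinely non-formal one is the appeal to Corollary~\ref{partytime}. It rests on the whole mechanism of Section~\ref{perturbation}: generalized-sense convergence $L(\epsilon)\to L(0)$ (Lemma~\ref{region-of-strong-convergence}, Theorem~\ref{stable}) and, beneath it, continuity of the spectral counting function $N(T,\epsilon)$ at $\epsilon=0$ for $T<1/4$ (Lemma~\ref{small-eigenvalues}) --- exactly the place where the \emph{ir}regularity of the perturbation (the cusp at infinity closing when $\epsilon\neq 0$, making the last two lines of the trace formula (\ref{sporformlen}) discontinuous) has to be absorbed, via the heat-kernel argument that cancels those terms once $T<1/4$. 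Everything else in the plan is quantifier bookkeeping, and, as noted, the converse implication needed to make the equivalence asserted before Conjecture~\ref{dumbass} precise is the argument of Section~\ref{cannot} read off with its weakest hypotheses.
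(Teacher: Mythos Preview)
Your argument is correct and follows the paper's own reasoning exactly: the forward direction (Selberg $\Rightarrow$ Conjecture~\ref{dumbass}) is the compactness argument from Corollary~\ref{partytime} used at the start of Section~\ref{cannot}, and the reverse is that section read with its minimal hypotheses, which is precisely how the paper justifies the equivalence. One small slip in your final parenthetical: it is not the cusp at infinity that may close under the deformation---you yourself correctly observed it stays open because $f=G_{q_1,q_2}\in M_2^\infty(\Gamma_0(q))$---but rather the \emph{other} cusps where $G_{q_1,q_2}$ need not be cuspidal; this is the source of the irregularity, and it does not affect your argument.
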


Since poles of Eisenstein series occur only at poles of the diagonal of the scattering matrix we may formulate this in terms of the scattering term
\begin{equation*}
  \Phi_{\infty\infty}(s,\chi_\e).
\end{equation*}
 Hence Conjecture \ref{dumbass} states that for every $1/2<c\leq c<1$ there exist $\e_0>0$ such that when $\abs{\e}<\e_0$ 
$\phi_{\infty\infty}(s,\chi_\e)$ is regular for $s\in [c,d]$.

\subsection{Goldfeld Eisenstein series}
Consider $f \in M_2^\infty(\G_0(q))$ and $\chi$ a Dirichlet character mod $q$ as in Section \ref{Dirichlet-setup} . Goldfeld  \cite{Goldfeld:1999aa} introduced \emph{Eisenstein series series twisted with modular symbols}. We define the twisted Eisenstein series (for the cusp at infinity) as
\begin{equation}
  \label{eq:4}
  D^1(z,s,f,\chi)=\sum_{\g\in\GinfmodG}\overline{\chi'(\g)}\Re\left(\int_{i\infty}^{\g z}f(\tau)d\tau\right)\Im(\g z)^s
\end{equation}
for $\Re(s)>1$ (This definition is slightly different from Goldfeld's original one, but it is more convenient for our purpose). This series has meromorphic continuation to $s\in \C$. We refer to \cite{Goldfeld:1999aa, OSullivan:2000aa, Petridis:2004aa} for the basic properties of this series: All singularities are located at the spectral values of $A(\G,\chi')$. Lemma 2.14 of \cite{Petridis:2004aa} gives that $s=1$ is a removable singularity ( \cite[Lemma 2.14]{Petridis:2004aa} is correct although it is based on  \cite[Lemma 2.12]{Petridis:2004aa} which needs straightforward modifications). At cuspidal values it has at most a simple pole and the residue equals a linear combination of Phillips-Sarnak integrals (See \cite{Goldfeld:1999aa}, \cite[Theorem 1.1]{Petridis:2002ab}): 
\begin{equation}\label{polesofgoldfeldseries}
  \sum_{j=1}^m\int_{\F_\G}L_\e\phi_j(\tau)E_\infty(\tau,s_0,\chi)d\mu(\tau)\phi_j(z)
\end{equation}
where the sum is over a basis of the $s_0(1-s_0)$ eigenspace. On the basis of our proof of Theorem \ref{highlow} this naturally leads to the following conjecture:
\begin{conjecture}\label{minanden}

 For every $q>1$ non-prime and every Dirichlet character $\chi$ mod q there exist a non-trivial factorization  $q=q_1q_2$ such that the following holds: The Eisenstein series twisted with modular symbol $D^1(z,s,G_{q_1,q_2},\chi)$ is analytic in $\Re(s)>1/2$.
\end{conjecture}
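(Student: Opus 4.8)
The plan is to deduce Conjecture~\ref{minanden} from Conjecture~\ref{myconjecture} --- equivalently, by Theorem~\ref{highlow}, from Selberg's Conjecture~\ref{selbergconjecture} --- by locating, and then killing, the possible poles of the Goldfeld series $D^1(z,s,G_{q_1,q_2},\chi)$ in the half-plane $\Re(s)>1/2$. First I would fix, once and for all, a non-trivial factorization $q=q_1q_2$ (possible since $q>1$ is non-prime), so that the weight-$2$ form $G_{q_1,q_2}\in M_2^\infty(\G_0(q))$ of Section~\ref{formsconstructed} is available as the perturbing datum. By the properties of $D^1$ recalled from \cite{Goldfeld:1999aa, Petridis:2004aa, Petridis:2002ab}, $D^1(z,s,G_{q_1,q_2},\chi)$ continues meromorphically to $\C$ with all singularities located at spectral values of $A(\G_0(q),\chi')$; since the continuous spectrum sits on $\Re(s)=1/2$, the only candidate poles in $\Re(s)>1/2$ occur at $s_0\in\,]1/2,1]$ with $s_0(1-s_0)$ a discrete eigenvalue, hence $s_0(1-s_0)\in[0,1/4[$. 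The endpoint $s_0=1$ (the residual eigenvalue $0$) is harmless: the singularity of $D^1$ at $s=1$ is removable by \cite[Lemma~2.14]{Petridis:2004aa}. So it suffices to show $D^1$ has no pole at any $s_0$ with $1/2<s_0<1$.

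Next I would separate the discrete eigenvalues in $]0,1/4[$ into residual and cuspidal ones. For $\G_0(q)$ with a Dirichlet-character nebentypus there are, classically (Selberg), no residual eigenvalues in $]0,1/4[$, so no pole of $D^1$ can arise from a residual eigenvalue with $1/2<s_0<1$. It then remains to handle small \emph{cuspidal} eigenvalues $\lambda=s_0(1-s_0)<1/4$. At such a value $D^1$ has at most a simple pole whose residue, by (\ref{polesofgoldfeldseries}), is the Phillips--Sarnak combination $\sum_j I(s_0,\phi_j,\G_0(q),\chi,G_{q_1,q_2})\,\phi_j(z)$ taken over an orthonormal eigenbasis $\{\phi_j\}$ of the $\lambda$-eigenspace; since the $\phi_j$ are linearly independent this residue vanishes precisely when every $I(s_0,\phi_j,\G_0(q),\chi,G_{q_1,q_2})$ vanishes. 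Choosing the $\phi_j$ to be eigenfunctions of the involution $V$, Theorem~\ref{etskridt} makes the integral vanish identically for the even $\phi_j$, so the entire question collapses to the odd eigenforms in the eigenspace.

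For the odd eigenforms the essential input is Theorem~\ref{integraliszero}: on Assumption~\ref{goaway} --- which for our fixed $s_0\in\,]1/2,1[$ is a consequence of Conjecture~\ref{myconjecture}, since a pole of $E_\infty(z,s,\e)$ near $s_0$ would be a residual eigenvalue of the perturbed system near $\lambda\in\,]0,1/4[$ --- the Phillips--Sarnak integral $I(s_0,\phi_j,\G_0(q),\chi,G_{q_1,q_2})$ is zero whenever $\lambda=s_0(1-s_0)$ is a small eigenvalue. Hence the residue (\ref{polesofgoldfeldseries}) vanishes at every $s_0\in\,]1/2,1[$, $D^1(z,s,G_{q_1,q_2},\chi)$ is holomorphic throughout $\Re(s)>1/2$, and Conjecture~\ref{minanden} follows from Conjecture~\ref{myconjecture}, hence from Selberg's Conjecture~\ref{selbergconjecture} by Theorem~\ref{highlow}.

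The hard part will be removing the conditionality of the odd-eigenform step, i.e.\ dispensing with Assumption~\ref{goaway}/Conjecture~\ref{myconjecture}. This cannot be done cheaply: Theorem~\ref{integral-calculation} computes, for an odd Hecke-normalized primitive form $\f$, the relevant residue as $-24$ times explicit Euler factors (non-vanishing by Theorem~\ref{coefficients-properties}) times the ratio $\Lambda(s_0-1/2,\f)\Lambda(s_0+1/2,\f)/\Lambda(2s_0,\chi)$, and Theorem~\ref{PSisnotzero} shows --- via the non-vanishing result of Rohrlich (Theorem~\ref{rohrlich}) applied to the twisted form $\f\otimes\psi$ --- that after a suitable primitive twist this ratio is non-zero. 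Thus a pole of some $D^1(z,s,G_{q_1,q_2},\chi)$ at a small cuspidal eigenvalue would actually \emph{produce} a small eigenvalue, so this last step has exactly the depth of Selberg's conjecture; I therefore expect the analyticity of $D^1$ in $\Re(s)>1/2$ to remain, as in Theorem~\ref{integraliszero}, a conditional statement.
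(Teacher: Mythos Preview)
Your argument is correct, and it establishes the same implication the paper claims (Selberg's Conjecture~\ref{selbergconjecture} $\Rightarrow$ Conjecture~\ref{minanden}), but you take a noticeably longer detour than the paper does. The paper's argument for this direction is essentially one line: under Conjecture~\ref{selbergconjecture} there are \emph{no} small eigenvalues at all, so after disposing of $s=1$ (removable) and the residual spectrum (empty in $]0,1/4[$ for $(\G_0(q),\chi')$), the set of candidate poles of $D^1$ in $\Re(s)>1/2$ is simply empty. You instead work from Conjecture~\ref{myconjecture}, invoke Assumption~\ref{goaway}, and apply Theorem~\ref{integraliszero} to force the Phillips--Sarnak integrals to vanish at each hypothetical small cuspidal eigenvalue. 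That is valid, but since Conjecture~\ref{myconjecture} is equivalent to Conjecture~\ref{selbergconjecture} by Theorem~\ref{highlow}, the ``small cuspidal eigenvalue'' case you treat so carefully is vacuous under your own hypothesis. What your route does buy is a direct (not via Theorem~\ref{highlow}) implication $\text{Conjecture~\ref{myconjecture}}\Rightarrow\text{Conjecture~\ref{minanden}}$, showing that the Goldfeld series reformulation is controlled by the perturbation-theoretic input alone.

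Your final paragraph is in line with the paper's point: the converse direction $\text{Conjecture~\ref{minanden}}\Rightarrow\text{Conjecture~\ref{selbergconjecture}}$ goes via (\ref{polesofgoldfeldseries}) and the argument of Section~\ref{cannot}, and has exactly the depth of Selberg's conjecture.
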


This is certainly implied by Selberg's Conjecture \ref{selbergconjecture} and using (\ref{polesofgoldfeldseries}) we see that it implies that the Phillips-Sarnak integrals related to $s_0(1-s_0)$ are zero if $\Re(s_0)>1/2$. We may therefore use the same arguments as in Section \ref{cannot} to prove that Conjecture \ref{minanden} is in fact \emph{equivalent} to Selberg's Conjecture \ref{selbergconjecture}.

\nocite{Bruggeman:1994aa}
\nocite{Bruggeman:1986aa}

\bibliographystyle{plain}
\bibliography{minbibliography}
\end{document}